\Crefname{equation}{}{}
\definecolor{CeruleanRef}{RGB}{12,127,172}
\newtheorem{assumption}{Assumption}[section]
\newtheorem{condition}{Condition}[section]
\newtheorem{test}{Test}[section]
\crefname{algocfline}{algorithm}{algorithms}
\crefname{assumption}{assumption}{assumptions}
\crefname{condition}{condition}{conditions}
\crefname{test}{test}{tests}
\theoremstyle{plain}
\newtheorem{remark}{Remark}[section]
\crefname{remark}{remark}{remarks}
\pgfplotsset{compat=newest}
\DeclareMathOperator*{\argmin}{arg\,min}
\DeclareMathOperator*{\argmax}{arg\,max}
\DeclareMathOperator{\proj}{proj}
\DeclareMathOperator{\prox}{prox}
\DeclareMathOperator{\dd}{d\!}
\let\div\relax \DeclareMathOperator{\div}{div}
\newcommand{\defeq}{\stackrel{\rm def}{=}}
\def\E{\mathbb{E}}
\def\R{\mathbb{R}}
\def\X{\mathcal{X}}
\def\L{\mathcal{L}}
\def\E{\mathbb{E}}
\def\N{\mathbb{N}}
\newcommand{\ck}[1]{{\color{black}#1}}
\newcommand{\rb}[1]{{\color{black}#1}}
\newcommand{\rev}[1]{{\color{black}#1}}
\newcommand{\ckrev}[1]{{\color{black}#1}}
\newcommand{\rbreview}[1]{{\color{black}#1}}
\begin{document}

\title{
        An Adaptive Sampling Augmented Lagrangian Method for Stochastic Optimization with Deterministic Constraints
                                                \thanks{Submitted to the editors \today.
\funding{This work was performed under the auspices of the U.S.~Department of Energy by Lawrence Livermore National Laboratory under Contract DE-AC52-07NA27344 and the LLNL-LDRD Program under Project tracking No.~22-ERD-009. Release number LLNL-JRNL-848453.}}}

\author{
    Raghu~Bollapragada\thanks{\protect
        Operations Research and Industrial Engineering,
                The University of Texas at Austin,
        Austin, TX 78712
        (\email{raghu.bollapragada@utexas.edu}, \email{cem.karamanli@utexas.edu})
    }
    \and
                Cem~Karamanli\footnotemark[2]
    \and
	Brendan~Keith\thanks{\protect
        Division of Applied Mathematics,
        Brown University,
        Providence, RI 02912 USA
        (\email{brendan\_keith@brown.edu}).
    }
    \and
    Boyan~Lazarov\thanks{\protect
         Center for Design Optimization,
         Lawrence Livermore National Laboratory,
         Livermore, CA 94550
         (\email{lazarov2@llnl.gov})
    }
    \and
    Socratis~Petrides\thanks{\protect
         Center for Applied Scientific Computing,
         Lawrence Livermore National Laboratory,
         Livermore, CA 94550
        (\email{petrides1@llnl.gov}, \email{wang125@llnl.gov})
    }
    \and
    Jingyi~Wang\footnotemark[5]
}

\date{\today}

\maketitle

\begin{center}
    \small
    \medskip
  \emph{Dedicated with respect and admiration to Leszek Demkowicz on the occasion of his 70th birthday anniversary.}
  \medskip
\end{center}

\begin{abstract}
The primary goal of this paper is to provide an efficient solution algorithm based on the augmented Lagrangian framework for optimization problems with a stochastic objective function and deterministic constraints. Our main contribution is combining the augmented Lagrangian framework with adaptive sampling, resulting in an efficient optimization methodology validated with practical examples. To achieve the presented efficiency, we consider inexact solutions for the augmented Lagrangian subproblems, and through an adaptive sampling mechanism, we control the variance in the gradient estimates. Furthermore, we analyze the theoretical performance of the proposed scheme by showing equivalence to a gradient descent algorithm on a Moreau envelope function, and we prove sublinear convergence for convex objectives and linear convergence for strongly convex objectives with affine equality constraints. The worst-case sample complexity of the resulting algorithm, for an arbitrary choice of penalty parameter in the augmented Lagrangian function, is $\mathcal{O}(\epsilon^{-3-\delta})$, where $\epsilon > 0$ is the expected error of the solution and $\delta > 0$ is a user-defined parameter. If the penalty parameter is chosen to be $\mathcal{O}(\epsilon^{-1})$, we demonstrate that the result can be improved to $\mathcal{O}(\epsilon^{-2})$, which is competitive with the other methods employed in the literature.
Moreover, if the objective function is strongly convex with affine equality constraints, we obtain $\mathcal{O}(\epsilon^{-1}\log(1/\epsilon))$ complexity. Finally, we empirically verify the performance of our adaptive sampling augmented Lagrangian framework in machine learning optimization and engineering design problems, including topology optimization of a heat sink with environmental uncertainty.

\end{abstract}

\section{Introduction} \label{sec:introduction}

We consider constrained stochastic optimization problems of the form
\begin{equation}
\label{eq:GeneralFormulation}
	\min_{x \in \X}~ f(x)
	\quad
	\text{subject to~}
	c(x) = 0
     \,,
\end{equation}
where the objective function $f\colon \mathbb{R}^n \to \mathbb{R}$ is the expected value $f(x) = \E_{\zeta}[F(x,\zeta)]$ of smooth random functions $F(\cdot,\zeta)\colon \mathbb{R}^n \to \mathbb{R}$, the constraint set $\X \subset \mathbb{R}^n$ is compact and convex, and the constraint function $c\colon \mathbb{R}^n \to \mathbb{R}^m$, 
\begin{equation}\label{eq:linearconstraint}
c(x) \defeq Ax - b
\,,
\end{equation}
is an affine map with $A\in\R^{m\times n}$ and $b \in \R^m$.
Our primary motivation is to develop feasible strategies for solving \rev{optimal} design problems with manufacturing and operational uncertainties \cite{rockafellar2010buffered, Chen2010a, Jansen2013b, Andreassen2014} (cf.~\Cref{sub:optimal_truss_design,sub:topology_optimization}) by efficiently solving optimization problems of the form \cref{eq:GeneralFormulation}. Due to the inherently high computational cost, current design problems are often limited to low-dimensional sources of uncertainty or involve smoothly-varying random fields, which can be parameterized by a truncated series expansion with a small number of discrete random variables \cite{khristenko2019analysis}. The above limitations restrict the practical applicability of some optimization approaches and lead to simplified heuristic procedures requiring subsequent manual intervention and suboptimal design performance \cite{Lazarov2016}. Therefore, designing an efficient and robust optimization framework to address these challenges is crucial. Moreover, constrained stochastic optimization problems like~\cref{eq:GeneralFormulation} also commonly arise in other applications such as machine learning and finance; see, e.g., \cite{cornuejols2006optimization,barocas2016big,royset2022risk,shapiro2021lectures} and references therein.

One of the well-known techniques
to solve constrained optimization problems is the augmented Lagrangian method \cite{hestenes1969multiplier,powell1969method,bertsekas2014constrained,curtis2015adaptive,curtis2016adaptive}. This method transforms the original constrained optimization problem \cref{eq:GeneralFormulation} into a sequence of subproblems where the constraint violation is penalized in the objective.
The main advantage of this transformation is that it enables using efficient algorithms for solving the subproblems.
On the other hand, the major drawback is that multiple subproblems must be solved sequentially.
To mitigate the cost of solving the subproblems, inexact solution mechanisms are widely used \cite{rockafellar1976augmented,kang2015inexact,sahin2019inexact,lan2016iteration,xu2021iteration,li2021rate,li2021augmented}.
Although these mechanisms are well-understood for deterministic problems, the literature on their usage in stochastic settings is limited \cite{ouyang2013stochastic,li2022stochastic}. Indeed, from our perspective, the main challenge in extending the augmented Lagrangian framework to stochastic approximation techniques  lies in defining inexactness criteria for the stochastic methods used to solve the subproblems. In this work, we propose stochastic inexactness termination conditions that address this gap and guarantee convergence in expectation.

Adaptive sampling is a powerful technique that is used in stochastic optimization to control the accuracy of gradient estimates in a computationally efficient manner. The idea comes from the following observation, which is made mathematically precise later in the text: There is little need for an accurate gradient estimate in a stochastic solver when the iterates are far from the optimal solution. However, stochastic algorithms require increasingly accurate gradient estimates as the iterates get closer to the solution. To maintain accuracy, adaptive sampling methods dynamically increase the batch/sample size in response to an \textit{a posteriori} estimate of the variance of the sampled gradients. Theoretical results from the adaptive sampling literature are promising. Indeed, in \cite{byrd2012sample}, the authors show that this methodology matches the best achievable complexity bound for \emph{unconstrained} stochastic programs. Adaptive sampling is also known to be efficient in practice\cite{bollapragada2018progressive}. Recently, adaptive sampling methods have been used to develop efficient proximal/projected gradient algorithms for constrained optimization problems \cite{beiser2020adaptive,xie2020constrained}.
Nevertheless, projecting gradients at every iteration can be challenging or inefficient, depending on the structure of the constraint set.
Therefore, we go beyond the work in \cite{beiser2020adaptive,xie2020constrained} and consider augmented Lagrangian techniques.
In turn, we address a more general class of algorithms and provide greater flexibility for treating the constraint set.
\subsection{Contributions}
In this paper, we propose an adaptive sampling augmented Lagrangian (ASAL) method by combining the augmented Lagrangian framework with adaptive sampling techniques to solve constrained stochastic optimization problems. We use 
adaptive sampling to control the accuracy of the gradient estimates when solving the subproblems obtained by  penalizing the linear equality constraints. Moreover, we employ inexact solution mechanisms by imposing stochastic inexactness conditions to terminate the inner (i.e., subproblem) iterations. In this way, we maximize the overall computational efficiency of our approach without sacrificing accuracy. Another important aspect of the methodology is that it relies on proximal/projected gradients to achieve feasibility with respect to the constraint $x \in \X$.
Since the method relies only on gradient information, we establish sublinear convergence in the outer iterations for convex objective functions.
Furthermore, given a user-defined algorithm parameter $\delta > 0$ and an arbitrary penalty parameter $\alpha > 0$, we find the total expected number of gradient evaluations to achieve an $\epsilon$-accurate solution to be $\mathcal{O}(\epsilon^{-3-\delta})$.
Moreover, if the penalty parameter is chosen to be sufficiently large, i.e., $\mathcal{O}(\epsilon^{-1})$, then our result improves to $\mathcal{O}(\epsilon^{-2})$.
Finally, the worst-case complexity becomes $\mathcal{O}(\epsilon^{-1} \log(1/\epsilon))$ for strongly convex objective functions and when $\X = \R^n$. \Cref{table:comparisonofpapers} compares our setting and theoretical results with the relevant literature. To evaluate the efficacy of our framework, we compare its performance to baseline algorithms in a collection of model problems from machine learning (\Cref{sub:logistic_regression_with_disparate_impact_constraints}) and engineering (\Cref{sub:optimal_truss_design,sub:topology_optimization}).

\begin{table}[htp]\label{table:comparisonofpapers}
\centering
\def\arraystretch{1.2}
\caption{
Summary of the theoretical convergence rate and sample complexity results in the relevant literature under different problem settings. In all the works mentioned here, the constraints are deterministic.  Here, $K$ denotes the (outer) iteration number, and $\epsilon$ denotes the required accuracy. Convergence rates are deterministic for deterministic problems and are in expectation for stochastic problems. Finally, sample complexity for stochastic solvers denotes the total number of expected stochastic gradient evaluations required to get $\epsilon-$accurate solutions. }
\resizebox{\columnwidth}{!}{
\begin{tabular}{cccccc}
\hline
paper & objective & set ($\X$) & constraints & rate (outer iter) & sample complexity \\
\hline
\multirow{2}{*}{\cite{lan2016iteration}} & convex & \multirow{2}{*}{convex compact} & \multirow{2}{*}{linear} & \multirow{2}{*}{$\mathcal{O}(1/K)$} & \multirow{2}{*}{-} \\
& deterministic & & & & \\ \hline
\multirow{2}{*}{\cite{xu2021iteration}} & convex & \multirow{2}{*}{convex closed} & \multirow{2}{*}{convex} & \multirow{2}{*}{$\mathcal{O}(1/K)$} & \multirow{2}{*}{-} \\
& deterministic & & & & \\ \hline
\multirow{2}{*}{\cite{xie2019si}} & strongly convex & \multirow{2}{*}{$\R^n$} & \multirow{2}{*}{linear} & \multirow{2}{*}{linear} & \multirow{2}{*}{$\mathcal{O}(\epsilon^{-1})$} \\
& stochastic & & & & \\ \hline
\multirow{2}{*}{\cite{xu2020primal}} & convex & \multirow{2}{*}{convex} & \multirow{2}{*}{convex} & \multirow{2}{*}{$\mathcal{O}(1/\sqrt{K})$} & \multirow{2}{*}{$\mathcal{O}(\epsilon^{-2})$} \\
& stochastic nonsmooth & & & & \\ \hline
\multirow{2}{*}{\cite{xu2020primal}} & strongly convex & \multirow{2}{*}{convex} & \multirow{2}{*}{convex} & \multirow{2}{*}{$\mathcal{O}(\log(K)/K)$} & \multirow{2}{*}{$\mathcal{O}(\epsilon^{-1}\log(1/\epsilon))$} \\
& stochastic nonsmooth& & & & \\ \hline
\Cref{thm:TotalWorkComplexity} & convex & \multirow{2}{*}{convex compact} & \multirow{2}{*}{linear} & \multirow{2}{*}{$\mathcal{O}(1/K)$} & \multirow{2}{*}{$\mathcal{O}(\epsilon^{-3-\delta})$} \\
(arbitrary penalty parameter) & stochastic & & & & \\ \hline
\Cref{corr:optimalworkcomp} & convex & \multirow{2}{*}{convex compact} & \multirow{2}{*}{linear} & \multirow{2}{*}{$\mathcal{O}(1/K)$} & \multirow{2}{*}{$\mathcal{O}(\epsilon^{-2})$} \\
 ($\mathcal{O}(\epsilon^{-1})$ penalty parameter) & stochastic & & & & \\ \hline
\multirow{2}{*}{\Cref{thm:TotalWorkComplexity_specialcase}} & strongly convex & \multirow{2}{*}{$\R^n$} & \multirow{2}{*}{linear} & \multirow{2}{*}{linear} & \multirow{2}{*}{$\mathcal{O}(\epsilon^{-1}\log(1/\epsilon))$} \\
& stochastic & & & & \\ \hline
\end{tabular}
}
\label{tbl:values}
\end{table}

\subsection{Literature Review}

The augmented Lagrangian method, also known as the method of multipliers, was first proposed by Hestenes \cite{hestenes1969multiplier} and Powell \cite{powell1969method}. In \cite{bertsekas2014constrained}, its performance is analyzed and compared to other common approaches, such as penalty and Lagrangian methods; see also \cite{eckstein2012augmented,rockafellar1974augmented,rockafellar1976augmented,bertsekas2015convex}.
Although there have been extensive research efforts to enhance the performance of the basic augmented Lagrangian method to solve deterministic optimization problems (see, e.g., \cite{lan2016iteration,xu2021iteration,li2021rate,li2021augmented,curtis2015adaptive,curtis2016adaptive,birgin2012augmented}), the current literature on stochastic optimization problems is limited \cite{jiang2022stochastic,li2022stochastic,xu2020primal}.
In \cite{jiang2022stochastic}, the authors apply a stochastic augmented Lagrangian method to the domain adaptation problem. \rbreview{In \cite{xu2020primal}, Xu developed stochastic primal-dual methods using the augmented Lagrangian function for solving nonsmooth optimization problems with a large number of constraints. In the aforestated approach, a projected stochastic gradient method is employed for the primal updates, while a randomized coordinate method is used for the dual updates.}

For structured optimization problems with linear constraints, the alternating direction method of multipliers (ADMM) framework is often preferred \cite{boyd2011distributed}. There has been significant work on stochastic versions of the ADMM method \cite{ouyang2013stochastic,zhong2014fast,suzuki2014stochastic,zheng2016fast,xie2019si}. In \cite{ouyang2013stochastic}, the authors consider stochastic ADMM and show a $\mathcal{O}(\log(K)/K)$ convergence rate for strongly convex and $\mathcal{O} (1/\sqrt{K})$ for general convex objective functions. In \cite{xie2019si}, the authors design an inexact solution mechanism for the subproblems in stochastic ADMM when $\X = \R^n$. There, the authors employ the stochastic gradient method to solve the subproblems and show a linear convergence rate for strongly convex functions.  Although our approach also involves inexact solutions, we consider adaptive sampling techniques to solve the subproblems and analyze both general convex and strongly convex functions. Moreover, our formulation allows us to consider implicit constraint sets (i.e., $\X \subset \R^n$) and utilizes only projected (or proximal) stochastic gradients. Other works achieve improved convergence rates by introducing stochastic variance reduction techniques (see, e.g., \cite{zhong2014fast,suzuki2014stochastic,zheng2016fast,9110776}).

 There are many articles on stochastic optimization methods with dynamic sample sizes \cite{friedlander2012hybrid,byrd2012sample,cartis2018global,pasupathy2018sampling,roosta2019sub,bollapragada2018adaptive,bollapragada2018progressive,bollapragada2019exact,bottou2018optimization,beiser2020adaptive,xie2020constrained,espath2021equivalence,kodakkal2022risk,ganesh2022gradient}. Most of these works focus on unconstrained problems. Of note is the work by Friedlander and Schmidt \cite{friedlander2012hybrid}, which shows linear convergence for finite-sum problems when the sample size increases at a geometric rate. Our work relates to the approach taken in Byrd et al.~\cite{byrd2012sample}, which shows linear convergence of the expected risk minimization problem and calculates the worst-case complexity bounds for the number of gradient evaluations required to get $\epsilon$-accurate solutions. Byrd et al.~\cite{byrd2012sample} also study the theoretical and practical aspects of the so-called \textit{norm test}, which controls the sample sizes. Finally, in \cite{beiser2020adaptive,xie2020constrained}, the authors consider adaptive sampling mechanisms for constrained stochastic programs. In both works, the constraints are represented by an abstract convex set, and the authors propose generalizations of the norm test that utilize projected (reduced) gradients.

 Another common methodology to approach \cref{eq:GeneralFormulation} is using sample average approximation (SAA) techniques \cite{shapiro2021lectures,phelps2016optimal,kouri2018optimization,kouri2022primal,royset2013optimal} which replace the expected value in the objective function with a fixed sample average or other empirical approximation.
 When it comes to alternative techniques to solve constrained stochastic programs, the sequential quadratic programming (SQP) framework \cite{curtis2021inexact,curtis2021worst,berahas2021sequential,berahas2022adaptive,na2022adaptive,na2023inequality} is also often utilized.

\subsection{Notation}
We denote the set of natural numbers by $\N \defeq \{0,1,2,\dots\}$, and the set of positive natural numbers as $\N_+ \defeq \{1,2,\dots\}$. Throughout this work, $\|\cdot\|$ denotes the $\ell_2$ vector norm or matrix norm and $\langle\cdot,\cdot\rangle$ denotes the $\ell_2$-inner product. Finally, a matrix $A \in \R^{m\times n}$ is indicated to be positive definite by writing $A \succ 0$ and positive semi-definite by writing $A \succeq 0$.
$A^T \in \R^{n\times m}$ denotes the transpose of a matrix $A$.

\subsection{Organization}

This paper is organized as follows. In \Cref{sec:preliminaries}, we introduce the preliminary material and assumptions used throughout the paper. The algorithmic framework and its components are given in \Cref{sec:algorithms}. In \Cref{sec:theory}, we analyze the convergence and complexity properties of our approach. Practical implementation of the algorithmic components is discussed in \Cref{sec:practical}. We demonstrate the numerical performance of our methodology in \Cref{sec:numerical_results}. Finally, in \Cref{sec:final_remarks}, we provide concluding remarks and discuss avenues for future research. 

\section{Preliminaries and Assumptions} \label{sec:preliminaries}

We provide preliminaries regarding the deterministic augmented Lagrangian method and its interpretation as a gradient descent method applied to the Moreau envelope of the dual function.
We also state preliminary assumptions and recall results from the literature that are relied on later in the paper. 

\subsection{Deterministic Augmented Lagrangian Method}
\label{sub:DeterministicAL}
The Lagrangian\linebreak function for the problem \cref{eq:GeneralFormulation} is 
\begin{equation}\label{eq:LagrangianFunction}
	\ell(x,\lambda) = f(x) - \langle \lambda , c(x) \rangle,
\end{equation}
where $\lambda \in \R^m$ is the Lagrangian (dual) parameter associated to the constraint function $c(x)$. 
Using \cref{eq:LagrangianFunction}, we can define the {saddle-point problem}, 
\begin{equation}\label{eq:Saddlepointproblem}
	\min_{x \in \X} \sup_{\lambda \in \R^m} \ell(x,\lambda),
\end{equation}
and note that
\begin{align*}
\sup_{\lambda \in \R^m} \ell(x,\lambda) = 
\begin{cases}
  f(x) & \quad \text{for} \quad c(x) = 0, \\
  \infty & \quad \text{for} \quad c(x) \neq 0.
\end{cases}
\end{align*}
Hence, if there exists $x \in \X \cap \{x \in \R^n \mid c(x) = 0\}$, then \cref{eq:Saddlepointproblem} is equivalent to \cref{eq:GeneralFormulation} in the sense that
\begin{align*}
\min_{x\in \X} \sup_{\lambda \in \R^m} \ell(x,\lambda) = \min_{ \{ x \in \X \mid c(x) = 0 \} } f(x)
\end{align*}
and
\begin{align*}
\argmin_{x\in \X} \sup_{\lambda \in \R^m} \ell(x,\lambda) = \argmin_{ \{ x \in \X \mid c(x) = 0 \} } f(x).
\end{align*}
A primal-dual iterate pair $(\hat x,\hat \lambda)$ is said to be a  stationary point of \cref{eq:Saddlepointproblem} if 
\begin{equation} \label{eq:firstStatPoint}
        (\hat x,\hat \lambda) \in \left\{(x,\lambda) \Bigg| \frac{\proj_{\X} (x - \eta \nabla \ell_x(x,\lambda)) - x}{\eta} = 0 \text{ and } c(x) = 0\right\},
\end{equation}
where $\eta > 0$ and
\begin{equation}\label{eq:proj}
\proj_{\X} (y) = \argmin_{x\in \X} \|x - y\|^2, 
\end{equation}
is the projection of $y \in \R^n$ onto the set $\X$ (see \cite{curtis2016adaptive,lan2016iteration}). We also refer to the conditions in \cref{eq:firstStatPoint} as the
\begin{subequations}
\label{eq:PrimalErrors}
\begin{align}
\label{eq:PrimalFeasibilityError}
\mbox{\emph{feasibility error}:}& \quad \|c(x)\|
\,,
\end{align}
and the
\begin{align}
\label{eq:PrimalOptimalityError}
\mbox{\emph{stationarity error}:}& \quad \left\|\frac{\proj_{\X} (x - \eta \nabla \ell_x(x,\lambda)) - x}{\eta}\right\|
\,.
\end{align}
\end{subequations}

The augmented Lagrangian method is a class of iterative methods that produce stationary points satisfying \cref{eq:firstStatPoint} by solving a sequence of subproblems where the objective function is the sum of the Lagrangian function $\ell(x,\lambda)$ and a quadratic penalty term that penalizes violation of the equality constraint $c(x) = 0$. Specifically, at any given iteration $k\in\mathbb{N}$, the basic primal and dual update rules are given as follows:
\begin{subequations}
\label{eq:ALUpdates}
\begin{align}
    x_k^* &\in \argmin_{x\in \X}~\mathcal{L}(x,\lambda_k;\alpha_k),  \label{eq:primalUpdate} \\
    \lambda_{k+1} &= \lambda_k - \alpha_k c(x_k^*), \label{eq:dualUpdate}
\end{align}
\end{subequations}
where $\alpha_k > 0$ is the penalty parameter and
\begin{equation} \label{eq:AugmentedLagrangian}
	\mathcal{L}(x,\lambda;\alpha) =
	f(x) - \langle \lambda , c(x) \rangle + \frac{\alpha}{2}\|c(x)\|^2,
\end{equation}
is the augmented Lagrangian function. Without restrictions on the objective function $f(x)$, the subproblem in \cref{eq:primalUpdate} may be unbounded.
In this paper, we invoke assumptions that ensure this is not the case (cf.~\Cref{ass:Existenceofsecondorderpoint} or \Cref{ass:StrcvxALwrtx}) as well as some other basic assumptions of additional utility. 

\subsection{Assumptions} We make the following assumptions about the objective function, the constraint function, and the existence of the solution. 

\begin{assumption}
\label{ass:Contdiff}
The objective function $f:\R^n \rightarrow \R$ is a convex continuously differentiable function on $\X$. That is,
$\nabla^2 f(x) \succeq 0$, for all $x \in \X$.
In addition, the gradient of the objective function $\nabla f:\R^n \rightarrow \R^n$ is Lipschitz continuous on $\X$ with Lipschitz constant $L < \infty$. That is,
\begin{align*}
\| \nabla f(x) - \nabla f(y)\| \leq L \| x - y \| \quad \forall x,y \in \X.
\end{align*}
\end{assumption}

\Cref{ass:Contdiff} implies that the augmented Lagrangian function is also a convex function with respect to $x$ on $\X$. That is, for any $\alpha > 0$,
\begin{align*}
\nabla^2_{xx} \L(x, \lambda; \alpha) = \nabla^2 f(x) + \alpha A^TA \succeq 0 \quad \forall x \in \X, \lambda \in \R^m.
\end{align*}

Note that the affine constraint function $c:\R^n \rightarrow \R^m$ is Lipschitz continuous on $\X$ with Lipschitz constant $\|A\|$. That is, for all $x,y \in \X$,
\begin{equation}\label{eq:LipschitzG}
\| c(x) - c(y)\| = \|A(x - y)\|\leq \|A\|\| x - y \| .\end{equation}
Moreover, as a consequence of \Cref{ass:Contdiff,eq:LipschitzG}, we can show that the gradient of the augmented Lagrangian function is Lipschitz continuous with respect to $x$ on $\X$ with Lipschitz constant $L + \alpha \|A\|^2$. That is, due to \cref{eq:AugmentedLagrangian},
\begin{equation*}
    \nabla_x\L(x,\lambda; \alpha) - \nabla_y \L(y,\lambda; \alpha)
    =
    \nabla f(x) - \nabla f(y)
    +
    \alpha
    \langle A, Ax - Ay \rangle
        \,,
\end{equation*}
and so
\begin{equation}
\label{eq:LagLipschitz}
    \|\nabla_x\L(x,\lambda; \alpha) - \nabla_y \L(y,\lambda; \alpha)\|
    \leq
    \left(L + \alpha \|A\|^2\right)\| x - y \|
    \,,
\end{equation}
for all $x, y \in \X$.
\begin{assumption}
\label{ass:Existenceofsecondorderpoint}
The set $\X \subset \R^n$ is nonempty, convex, and compact. Also, there exists an optimal primal-dual pair $(x^*, \lambda^*)$ that satisfies the optimality conditions \cref{eq:firstStatPoint}.
\end{assumption}
The compactness of set $\X$ implies that there exists a positive $D < \infty$ such that
\begin{equation}\label{eq:compactD}
    \|x - y\| \leq D \quad \forall x, y \in \X.
\end{equation}
Also, the existence of an optimal solution $x^*$ implies that the problem in \cref{eq:primalUpdate} is bounded below. That is, for any $x \in \X$, $\lambda \in \R^m$, and $\alpha \geq 0$,   
\begin{align*}
    \mathcal{L}(x,\lambda;\alpha)
    \geq f(x) - \langle \lambda , c(x) \rangle
    &= f(x) - \langle \lambda , c(x) - c(x^*)\rangle \\
    &\geq f(x) - \|\lambda\|\|c(x) - c(x^*)\| \\
        &\geq f(x^*) + \langle \nabla f(x^*), x - x^*\rangle - \|\lambda\|\|c(x) - c(x^*)\| \\
    &\geq f(x^*) - \|\nabla f(x^*)\|D - \|\lambda\|\|A\|D
            \,,
\end{align*}
where the first inequality is due to $\|c(x)\|^2 \geq 0$, the equality is due to $c(x^*) = 0$,  the third inequality is due to convexity of function $f$ (\Cref{ass:Contdiff}) and \cref{eq:compactD}, and the last inequality is due to  \cref{eq:LipschitzG} and \cref{eq:compactD}. Therefore, \cref{eq:primalUpdate} is well-defined.  

We also develop results for the special case where the augmented Lagrangian function is strongly convex with respect to $x\in\X$. 
\begin{assumption}
\label{ass:StrcvxALwrtx}
The augmented Lagrangian is $\mu$-strongly convex with respect to $x\in\X$. That is,
\begin{align*}
\nabla^2_{xx} \L(x, \lambda)  \succeq \mu I \quad \forall x \in \X, \lambda \in \R^m,
\end{align*}
where $I \in \R^{n \times n}$ is an identity matrix.  
\end{assumption}
Note that if the objective function $f(x)$ is $\mu$-strongly convex or $A$ has full column rank, then \Cref{ass:StrcvxALwrtx} is trivially satisfied. Moreover, if \Cref{ass:StrcvxALwrtx} holds, then \cref{eq:primalUpdate} is well-defined for any $\lambda_k \in \R^m$.  

We also make a standard assumption about the stochastic gradient of $f(x) = \E_{\zeta}[F(x,\zeta)]$. \begin{assumption}
\label{ass:variance}
The variance in the stochastic gradient of $f(x)$ is bounded. That is, there exist constants $\omega_1,\omega_2 \geq 0$ such that \begin{align*}
\E_{\zeta}[\|\nabla F(x,\zeta) - \nabla f(x)\|^2] \leq \omega_1 \|\nabla f(x)\|^2 + \omega_2, \quad \forall x\in \X.
\end{align*}
\end{assumption}
Using \Cref{ass:Contdiff,ass:Existenceofsecondorderpoint,ass:variance}, it follows that 
\begin{align*}
\E_{\zeta}[\|\nabla F(x,\zeta) - \nabla f(x)\|^2] &\leq 2\omega_1 \|\nabla f(x) - \nabla f(x^*)\|^2 + 2\omega_1\|\nabla f(x^*)\|^2 + \omega_2 \nonumber \\
&\leq 2\omega_1 L^2D^2 + 2\omega_1\|\nabla f(x^*)\|^2 + \omega_2
\defeq \omega
\,,\nonumber 
\end{align*}
where the first inequality is due to the fact that $\|a + b\|^2 \leq 2\|a\|^2 + 2\|b\|^2$ for any $a,b \in \R^n$. In turn, we note that combining the assumptions above implies the existence of $\omega \geq 0$ such that
\begin{align}\label{eq:bndvar}
    \E_{\zeta}[\|\nabla F(x,\zeta) - \nabla f(x)\|^2] \leq \omega, \quad \forall x\in \X.
\end{align}

\subsection{Gradient Descent and the Moreau envelope} \label{subsec:equivalence}
The convergence properties of the augmented Lagrangian method are often analyzed by showing its equivalence to a method (e.g., proximal point method) applied to dual problem (cf.~\cite{wright2022optimization}). We follow a similar approach in our analysis and show the equivalence of the augmented Lagrangian method and gradient descent method applied to the Moreau envelope \cite{moreau1965proximite} of the (negative) dual function. 
The negative of the dual function of \cref{eq:GeneralFormulation} is denoted
\begin{equation}\label{eq:dualFunction}
    q(\lambda) = -\min_{x\in \X} \ell(x,\lambda)
    \,,
\end{equation}
and is known to be a convex, proper and continuous function from $\R^m$ to $\R$ \cite{boyd2004convex}.
For any given $\alpha > 0$, the Moreau envelope of $q(\lambda)$ is defined as follows \cite{moreau1965proximite}:
\begin{align}\label{eq:Moreauenvelope}
    q_{\alpha}(u) & \defeq \min_{\lambda} \left [ q(\lambda) + \frac{1}{2 \alpha} \| \lambda - u \|^2 \right ]
    \,.
\end{align}
In the following lemma, we summarize the important properties of Moreau envelopes.
\begin{lemma}\label{lemma:MoreauProperties}
The function $q_{\alpha}(u)$ given in \cref{eq:Moreauenvelope} is called the Moreau envelope of $q(\lambda)$ and satisfies the following properties.
\begin{enumerate}[label=(\roman*)]
\item \cite[Equation~3.2]{parikhproximal} The gradient of the Moreau envelope is
\begin{align}\label{eq:moreaugrad}
    \nabla q_\alpha(u) & = \frac{1}{\alpha}(u - \prox_{\alpha q}(u)),
\end{align}
where
\begin{align*}
    \prox_{\alpha q}(u) & = \argmin_{\lambda} \left [ q(\lambda) + \frac{1}{2 \alpha} \| \lambda - u \|^2 \right ].
\end{align*}
\item \cite[Corollary 18.19]{bauschke2011convex} The gradients $\nabla q_\alpha(u)$ are Lipschitz continuous with Lipschitz constant $L_{\alpha} = \alpha^{-1}$. That is, 
\begin{align} \label{eq:lipschitz_moreau}
\|\nabla q_\alpha(u)  - \nabla q_\alpha(v) \| & \leq
\alpha^{-1} \|u - v\|, \quad \forall u,v \in \R^m. 
\end{align}
\item \cite[Page 136]{parikhproximal} The Moreau envelope retains the optimal value and the set of minimizers. That is,
\begin{align} \label{eq:dualMoreaequivalence}
\min_{\lambda} q(\lambda) = \min_{\lambda} q_{\alpha}(\lambda) \quad \mbox{and} \quad   \argmin_{\lambda} q(\lambda) = \argmin_{u} q_{\alpha}(u),
\end{align}
where the unique common minimizer $\lambda^\ast \in \R^m$ satisfies the fixed point equation $\lambda^\ast = {\prox}_{\alpha q}(\lambda^\ast)$.
\item \cite[Lemma 2.23]{planiden2016strongly} $q(\lambda)$ is strongly convex with parameter $\mu_q > 0$ if and only if $ q_\alpha(u)$ is strongly convex with parameter $\mu_\alpha = \frac{\mu_q}{\mu_q \alpha + 1} > 0$.
\end{enumerate}
\end{lemma}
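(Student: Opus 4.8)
The plan is to derive all four properties from standard facts about the proximal operator of the proper, closed, convex, real-valued function $q$; it is real-valued because $\X$ is compact and $f$ is continuous, so $q(\lambda) = \sup_{x\in\X}[\langle\lambda, c(x)\rangle - f(x)]$ is a finite supremum of affine functions of $\lambda$. Since each of (i)--(iv) is classical and appears in the references cited in the statement, the work is mostly to record the underlying arguments. For (i), I would first note that for fixed $u$ the inner objective $\lambda \mapsto q(\lambda) + \frac{1}{2\alpha}\|\lambda - u\|^2$ is $\alpha^{-1}$-strongly convex and coercive, hence has a unique minimizer, so $\prox_{\alpha q}(u)$ is well defined and characterized by $u - \prox_{\alpha q}(u) \in \alpha\,\partial q(\prox_{\alpha q}(u))$. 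Differentiability of $q_\alpha$ and the identity $\nabla q_\alpha(u) = \alpha^{-1}(u - \prox_{\alpha q}(u))$ then follow from Danskin's theorem, or equivalently from the standard fact that the value function of a parametric minimization with a uniformly strongly convex objective is differentiable, with gradient equal to the partial gradient of the objective at the (unique) inner minimizer.

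For (ii), I would invoke firm nonexpansiveness of $\prox_{\alpha q}$, namely $\|\prox_{\alpha q}(u) - \prox_{\alpha q}(v)\|^2 \le \langle \prox_{\alpha q}(u) - \prox_{\alpha q}(v),\, u - v\rangle$ for all $u,v \in \R^m$; since $\alpha\,\nabla q_\alpha = I - \prox_{\alpha q}$ is the complement of a firmly nonexpansive operator it is itself firmly nonexpansive, hence $1$-Lipschitz, and dividing by $\alpha$ yields \cref{eq:lipschitz_moreau}. For (iii), putting $\lambda = u$ in \cref{eq:Moreauenvelope} gives $q_\alpha(u) \le q(u)$, while $q_\alpha(u) = q(\prox_{\alpha q}(u)) + \frac{1}{2\alpha}\|\prox_{\alpha q}(u) - u\|^2 \ge \inf_\lambda q(\lambda)$; together these force $\inf_u q_\alpha(u) = \inf_\lambda q(\lambda)$, and the minimizer sets coincide because, by (i), $\nabla q_\alpha(u) = 0 \iff u = \prox_{\alpha q}(u) \iff 0 \in \partial q(u)$, which also supplies the fixed-point characterization of the common minimizer. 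The one subtlety here is the asserted \emph{uniqueness} of $\lambda^\ast$: in general $\argmin q$ need not be a singleton, so I would either take uniqueness from the cited reference or restrict to the settings where it is guaranteed (e.g.\ $q$ strongly convex as in (iv), or a regularity condition on \cref{eq:GeneralFormulation} forcing a unique dual solution).

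For (iv), I would pass to Fenchel conjugates. Writing $q_\alpha$ as the infimal convolution $q\,\square\,\tfrac{1}{2\alpha}\|\cdot\|^2$, one has $q_\alpha^\ast = q^\ast + \tfrac{\alpha}{2}\|\cdot\|^2$, and by the exact duality ``$g$ is $\mu$-strongly convex $\iff$ $g^\ast$ is differentiable with $\mu^{-1}$-Lipschitz gradient'' the chain of equivalences reads: $q$ is $\mu_q$-strongly convex $\iff$ $\nabla q^\ast$ is $\mu_q^{-1}$-Lipschitz $\iff$ $\nabla q_\alpha^\ast = \nabla q^\ast + \alpha I$ is $(\mu_q^{-1} + \alpha)$-Lipschitz $\iff$ $q_\alpha$ is $(\mu_q^{-1} + \alpha)^{-1} = \tfrac{\mu_q}{\mu_q\alpha + 1}$-strongly convex. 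I expect the main obstacle to be making this equivalence tight in \emph{both} directions with the exact modulus: recovering strong convexity of $q$ from that of $q_\alpha$ amounts to subtracting the quadratic $\tfrac{\alpha}{2}\|\cdot\|^2$ at the conjugate level while retaining exact control of the Lipschitz constant, and this is precisely the content I would cite from \cite{planiden2016strongly} rather than re-derive from scratch.
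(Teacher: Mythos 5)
Your proposal is correct, but it does considerably more than the paper, which offers no proof of this lemma at all: each item is stated with a citation to the literature (Parikh--Boyd for (i) and (iii), Bauschke--Combettes for (ii), Planiden--Wang for (iv)), and the only in-text justification is the observation preceding the lemma that $q$ is a finite, convex, continuous function --- essentially your opening remark that $q(\lambda)=\max_{x\in\X}\left[\langle\lambda,c(x)\rangle-f(x)\right]$ is a finite supremum of affine maps. Your self-contained derivations are sound: strong convexity plus coercivity of the inner objective for well-posedness of the prox and Danskin-type differentiability for (i); firm nonexpansiveness of $\prox_{\alpha q}$ and of its complement $I-\prox_{\alpha q}=\alpha\nabla q_\alpha$ for (ii); the two-sided sandwich $\inf q\le q_\alpha(u)\le q(u)$ together with $\nabla q_\alpha(u)=0\iff u=\prox_{\alpha q}(u)\iff 0\in\partial q(u)$ for (iii); and the conjugate identity $q_\alpha^\ast=q^\ast+\tfrac{\alpha}{2}\|\cdot\|^2$ combined with the strong-convexity/Lipschitz-gradient duality for (iv). Two of your side remarks are worth keeping: the reverse implication in (iv) (recovering the exact modulus of $q$ from that of $q_\alpha$) genuinely needs the Baillon--Haddad-type characterization "$h$ convex with $L$-Lipschitz gradient iff $\tfrac{L}{2}\|\cdot\|^2-h$ is convex" rather than a triangle inequality, which is precisely what the cited reference supplies; and you are right that the word "unique" in (iii) is not justified by the paper's standing assumptions alone --- $\argmin q$ need not be a singleton for a general convex dual, so uniqueness should be read as holding only under additional hypotheses such as the strong convexity of $q$ established later in \Cref{prop:strnconv}. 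Neither point invalidates anything the paper subsequently uses, since \cref{eq:dualMoreaequivalence} is only invoked for the optimal value and the minimizer \emph{set}.
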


Due to \Cref{ass:Existenceofsecondorderpoint} and weak duality \cite{boyd2004convex}, we have that $q(\lambda)$ is bounded below. That is, the optimal value $q^*$ is finite.
Indeed,
\begin{equation}\label{eq:finitedual}
q^* =\min_{\lambda} q_{\alpha}(\lambda)=  \min_{\lambda} q(\lambda) = - \max_{\lambda} \left [ - q(\lambda) \right ] \geq - \min_{x \in \X, c(x) = 0} f(x) = - f(x^*)
\,.
\end{equation}
Owing to this fact and the properties of $q_\alpha(\lambda)$ in~\Cref{lemma:MoreauProperties}, the dual variable $\lambda \to \lambda^\ast$ will converge by iteratively minimizing $q_\alpha(\lambda)$ as in the gradient descent method.
More explicitly, we may form a convergent sequence of dual variables as follows: 
\begin{align}
    \lambda_{k+1} &= \lambda_k - \alpha \nabla q_{\alpha}(\lambda_k) \label{eq:UpdateruleGDonMoreau}\\
    &= \argmin_{\lambda} \left [ q(\lambda) + \frac{1}{2 \alpha} \| \lambda - \lambda_k \|^2 \right ] \nonumber \\     &= \argmin_{\lambda} \left [ -\min_{x \in \X} [\ell(x,\lambda)] + \frac{1}{2 \alpha} \| \lambda - \lambda_k \|^2 \right ] \nonumber \\
    &=\argmax_{\lambda} \left [ \min_{x \in \X} [\ell(x,\lambda) - \frac{1}{2 \alpha} \| \lambda - \lambda_k \|^2 ] \right ] \nonumber 
    \,,
\end{align}
where the second equality is due to \cref{eq:moreaugrad} and third equality is due to \cref{eq:dualFunction}. The function $\ell(x,\lambda)- \frac{1}{2\alpha}\|\lambda - \lambda_k\|^2$ is convex with respect to $x$ on $\X$ and strongly concave with respect to $\lambda$. By Sion's Minimax Theorem \cite{sion1958general}, we can interchange the min and max operations (cf.~\cite[Section~10.5.2]{wright2022optimization}) and obtain an equivalent characterization. That is, 
\begin{align}
    \max_{\lambda} \left [ \min_{x \in \X} [ \ell(x,\lambda) - \frac{1}{2\alpha}\|\lambda - \lambda_k\|^2 ] \right ]&= 
    \min_{x \in \X} \left [ \max_{\lambda} [ \ell(x,\lambda) - \frac{1}{2\alpha}\|\lambda - \lambda_k\|^2 ] \right ] \\
    	& = \min_{x \in \X} \left [ \max_{\lambda} [ f(x) - \langle \lambda , c(x) \rangle - \frac{1}{2\alpha}\|\lambda - \lambda_k\|^2 ] \right ] \label{eq:maxmintominmax}         \,.
\end{align}
Note that the optimal solution to the max problem (strongly concave in $\lambda$) in the second equality is $\lambda = \lambda_k - \alpha c(x)$.  Substituting this expression into \cref{eq:maxmintominmax}, we find
\begin{align*}
    \max_{\lambda} \left [ \min_{x \in \X} [ \ell(x,\lambda) - \frac{1}{2\alpha}\|\lambda - \lambda_k\|^2 ] \right ] = \min_{x \in \X} \mathcal{L}(x,\lambda_k;\alpha).
\end{align*}
Hence, the dual update $\lambda_{k+1}$ is given as follows:
\begin{subequations}
\begin{align}
    x_k^* &\in \argmin_{x\in \X}~\mathcal{L}(x,\lambda_k;\alpha)  \label{eq:moreauprimalUpdate} \\
    \lambda_{k+1} &= \lambda_k - \alpha c(x_k^*). \label{eq:moreaudualUpdate}
\end{align}
\end{subequations}

We now observe that the primal updates in \cref{eq:primalUpdate} and \cref{eq:moreauprimalUpdate} are both minimizers of the augmented Lagrangian function within the set $\X$. This optimization problem can have multiple optimal solutions when the augmented Lagrangian function $\mathcal{L}(x,\lambda_k;\alpha)$ is only a general convex function (not strongly convex).
Hence, the updates \cref{eq:primalUpdate} and \cref{eq:moreauprimalUpdate} may not be the same. However, the dual updates are equivalent due to the following inequality \cite[Equation~2.16]{lan2016iteration}: For any $x \in \X$ and $x^*_k \in \arg\min_{x\in \X}\mathcal{L}(x,\lambda_k;\alpha)$, 
\begin{equation} \label{eq:FeasToFunc}
\|c(x^*_k) - c(x)\|^2 \leq \frac{2}{\alpha}\left(\L(x, \lambda_k;\alpha) - \L(x^*_k,\lambda_k;\alpha)\right).
\end{equation}
Therefore, all solutions of $\min_{x\in \X}~\mathcal{L}(x,\lambda_k;\alpha)$ have the same constraint function value $c(x)$ and the augmented Lagrangian method is equivalent to the gradient descent method applied to the Moreau envelope \cref{eq:Moreauenvelope}. 
Finally, we conclude this section on preliminary material by noting that  
\begin{align}\label{eq:gradform}
    \nabla q_{\alpha} (\lambda_k) = c(x_k^*)
    \,,
\end{align}
due to \cref{eq:UpdateruleGDonMoreau,eq:moreaudualUpdate}. 

\section{Algorithmic Framework} \label{sec:algorithms}
This section begins with a description of a generic inexact augmented Lagrangian framework for solving \cref{eq:GeneralFormulation}.
We then provide a complete description of our algorithm, which employs the adaptive sampling proximal gradient method \cite{beiser2020adaptive,xie2020constrained} to minimize the augmented Lagrangian function \cref{eq:AugmentedLagrangian} defined at each iteration.

Each primal variable update \cref{eq:primalUpdate} in the augmented Lagrangian method involves solving a computationally expensive optimization problem, namely,
\begin{equation}
\label{eq:augLagProb}
	\min_{x \in \X}~ \mathcal{L}(x,\lambda_k;\alpha),
\end{equation}
where $\alpha > 0$ is the penalty parameter and $\lambda_k$ is the dual variable at iteration $k \in \N$. Owing to the stochastic nature of sampling the objective function $f(x) = \E_{\zeta}[F(x,\zeta)]$, the exact solutions to these subproblems cannot be obtained efficiently. Therefore, we work with the inexact augmented Lagrangian framework outlined in \cref{alg:StochALwInexact}.
At each iteration of this meta-algorithm, the subproblem \cref{eq:augLagProb} is solved (inexactly) by a given subproblem solver $\mathcal{S}$ until certain as yet unspecified inexactness conditions hold (cf.~\Cref{subsec: tolr}).
Of course, the dual variable update incurs errors attributed to the inexact primal solves.
However, if appropriate inexactness conditions are used to terminate the subproblem solver, then \cref{alg:StochALwInexact} will still converge at the same rate as the exact algorithm~\cref{eq:ALUpdates}, albeit in expectation.

\begin{algorithm}[H]
\caption{Inexact Augmented Lagrangian Framework}
\label{alg:StochALwInexact}
\begin{algorithmic}[1]
\REQUIRE $x_{-1} \in \R^n$, $\lambda_0 \in \R^m$, $\alpha>0$, inexactness conditions, solver $\mathcal{S}$. 
\FOR{$k = 0,1,...$ }
    \STATE Set starting point $x_{k,0} \gets x_{k-1}$
    \STATE Find an approximate minimizer $x_k$ of \cref{eq:augLagProb} using solver $\mathcal{S}$, starting with $x_{k,0}$ such that some inexactness conditions are satisfied  
        \STATE Update $\lambda_{k+1} \gets \lambda_k - \alpha c(x_k)$
\ENDFOR
\end{algorithmic}
\end{algorithm}

\begin{remark}
We make the following remarks about \cref{alg:StochALwInexact}.
\begin{itemize}
        \item \textbf{Solver and inexactness conditions:} For the sake of generality, we leave the description of the solver and inexactness conditions arbitrary and specify them in \Cref{subsec: adasample} and \Cref{subsec: tolr} respectively.  We assume that the solver $\mathcal{S}$ can compute an approximate minimizer $x_k$ that satisfies the inexactness conditions. 
    The sequences of primal and dual iterates obtained in the algorithm are random due to the stochastic nature of the objective function $f(x)$.
    Therefore, this assumption is reasonable when the inexactness conditions are also stochastic. 
    
    \item \textbf{Penalty parameter ($\alpha > 0$):} The algorithm employs a constant penalty parameter $\alpha > 0$. In \Cref{sec:theory}, we show that the algorithm converges for any choice of this parameter and does not depend on problem characteristics or other algorithmic parameters.  

    \item \textbf{Starting points ($x_{k,0}$):} At each iteration, the algorithm uses the previous primal iterate as starting point in the solver $\mathcal{S}$ to solve \cref{eq:augLagProb}. This is meant to reduce the computational effort to solve \cref{eq:augLagProb}. Since the successive augmented Lagrangian functions differ only in the dual variable $\lambda_k$, the approximate minimizer of the previous subproblem is an intuitive estimate of the solution to the current problem. In \Cref{sec:theory}, we quantify the efficiency of this starting point rule in terms of total computational work. 
\end{itemize}
\end{remark}
We now describe the unspecified components of this algorithm: the solver $\mathcal{S}$ and the tolerance conditions. 
\subsection{Adaptive Sampling Proximal Gradient Method}\label{subsec: adasample}
Projected or proximal stochastic gradient methods are a popular class of methods for solving \cref{eq:augLagProb} when the projection or proximal operators are easy to compute \cite{parikhproximal}.
The iterate update of a projected stochastic gradient method is given as follows:
\begin{equation}
    x_{k,t+1} = x_{k,t} + \eta R_{S_{k,t}}(x_{k,t},\lambda_{k};\alpha,\eta)     \,,
\end{equation}
where $\eta > 0$ is the step size parameter, $k \in \N$ denotes the \emph{outer} augmented Lagrangian iteration counter, $t\in \N$ denotes the \emph{inner} projected stochastic gradient iteration counter, $S_{k,t}$ is a set consisting of i.i.d.~samples of $\zeta$,
\begin{align}
  R_{S_{k,t}}(x_{k,t},\lambda_{k};\alpha,\eta) &\defeq  \frac{\proj_{\X}(x_{k,t} - \eta \nabla_x \mathcal{L}_{S_{k,t}}(x_{k,t},\lambda_{k};\alpha)) - x_{k,t}}{\eta}, \label{eq:redgradsk}\\
    \nabla_x \mathcal{L}_{S_{k,t}}(x_{k,t},\lambda_{k};\alpha)  &\defeq \frac{1}{|S_{k,t}|} \sum_{\zeta_i \in S_{k,t}} \nabla_x \mathcal{L}(x_{k,t},\lambda_{k}, {\zeta_i};\alpha), \label{eq:gradLagsk}
\end{align}    
and 
\begin{equation*}
    \nabla_x \mathcal{L}(x_{k,t},\lambda_{k}, {\zeta_i};\alpha) = \nabla_x F(x_{k,t},\zeta_i) - \langle \lambda_{k}, \nabla c(x_{k,t}) \rangle + \alpha  \langle c(x_{k,t}),\nabla c(x_{k,t})  \rangle.
\end{equation*}
In what follows, it is helpful to note that $R_{S_{k,t}}(x_{k,t},\lambda_{k};\alpha,\eta)$ denotes a stochastic approximation of the true projected (reduced) gradient
\begin{equation}\label{eq:redgrad}
  R(x_{k,t},\lambda_{k};\alpha,\eta) \defeq  \frac{\proj_{\X}(x_{k,t} - \eta \nabla_x \mathcal{L}(x_{k,t},\lambda_{k};\alpha)) - x_{k,t}}{\eta}. 
\end{equation}

Two adaptive sampling strategies have recently been proposed for the projected\linebreak stochastic gradient method \cite{xie2020constrained,beiser2020adaptive}.
Both strategies employ a mechanism for improving the quality of the stochastic gradient approximation by updating the sample size $|S_{k,t}|$ on the fly at each (subproblem) iteration $t$.
In turn, they overcome a significant limitation of fixed sample size strategies without \rbreview{compromising efficiency, while also maintaining the fast convergence of their deterministic counterparts.}
Indeed, fixed sample size strategies can only guarantee convergence to a neighborhood of the solution or must compromise on the convergence rate.

Adaptive sampling strategies aim to ensure that \rbreview{the variance in the stochastic gradient is controlled by the squared norm of the projected gradient. }In \cite{xie2020constrained}, this is written as follows:
\begin{equation} \label{eq:norm_ideal_auglag}
    \E_{k,t} \left[\|\nabla_x \mathcal{L}_{S_{k,t}}(x_{k,t},\lambda_{k}) -  \nabla_x \mathcal{L}(x_{k,t},\lambda_{k})\|^2\right] \leq \theta_g^2 \|\E_{k,t}[R_{S_{k,t}}(x_{k,t},\lambda_{k};\alpha,\eta)]\|^2
    \,,
\end{equation}
where $\theta_g > 0$ is a given parameter,
and
\begin{equation}
\label{eq:ConditionalExpectation_kt}
    \E_{k,t}[\,\cdot\,] \defeq \E[\,\cdot\,|\lambda_{k}, x_{k,t}]\,,
\end{equation}
denotes the expectation conditioned on the past iterates until $\lambda_{k}, x_{k,t}$. Specifically, $\E_{k,t}$ is the conditional expectation conditioned on the filtration $$\mathbb{T}_{k,t} = \sigma(\lambda_0,x_{-1,0},S_{0,0},\ldots,S_{0, T_{0}},\ldots, S_{k-1, 0},\ldots, S_{k-1, T_{k-1}}, S_{k, T_{0}},\ldots, S_{k, t})\,,$$ where $T_{i}$ denotes the number of inner iterations performed at the outer iteration $i$.

Using the definition of the gradient of the augmented Lagrangian function in \cref{eq:norm_ideal_auglag} results in the following equivalent condition: 
\begin{equation}\label{eq:norm_ideal_var}
    \E_{k,t} \left[\|\nabla F_{S_{k,t}}(x_{k,t}) -  \nabla f (x_{k,t})\|^2\right] \leq \theta_g^2 \|\E_{k,t}[R_{S_{k,t}}(x_{k,t},\lambda_{k};\alpha,\eta)]\|^2
    \,,
\end{equation}
where
\begin{equation}
    \nabla F_{S_{k,t}}(x_{k,t}) \defeq \frac{1}{|S_{k,t}|} \sum_{\zeta_i \in S_{k,t}}\nabla F(x_{k,t},\zeta_i).
\end{equation}
Since the samples of $\zeta$ are i.i.d., Bienaym\'e's identity may be used to simplify the left-hand side of~\cref{eq:norm_ideal_var}. 
This results in the following equivalent condition:
\begin{condition}[Theoretical Sampling Condition]
\label{con:Normcondition1proj}
    For any given $\theta_g > 0$, the variance in the stochastic gradient of the objective function $f$ is controlled by the squared norm of the expected projected gradient $R_{S_{k,t}}$. That is, 	\begin{equation}
	\label{eq:Normcondition1proj}
		\frac{\E_{\zeta}[\|\nabla F(x_{k,t},\zeta) - \nabla f(x_{k,t})\|^2]}{|S_{k,t}|}
		\leq
		\theta_g^2 \|\E_{k,t}[R_{S_{k,t}}(x_{k,t},\lambda_{k};\alpha,\eta)] \|^2.
	\end{equation}
\end{condition}
This condition involves computing true variances and exact projected gradients that are unavailable in practice. Therefore, in \Cref{sec:practical}, we also propose a practical version of this condition to control the sample sizes.

We conclude this subsection with remark and the following well-known result (adapted to our setting) \cite[Corollary 2.3.2]{nesterov2018lectures} that is used in the coming analysis. 
\begin{proposition}
Suppose \Cref{ass:Contdiff,ass:Existenceofsecondorderpoint} hold. Then, for any $0< \eta < \frac{1}{L + \alpha \|A\|^2} $ and for all $x\in \X, \lambda \in \R^m, \alpha > 0$,
\begin{equation}\label{eq:AugLipIneq}
\|R(x,\lambda;\alpha,\eta)\|^2 \leq \frac{2}{\eta} \left(\L(x,\lambda,\alpha) - \L(x^*_\L,\lambda,\alpha)\right),  \end{equation}
where $x^*_\L \in \argmin_{x \in \X} \L(x,\lambda,\alpha)$. 
Moreover, if \Cref{ass:StrcvxALwrtx} also holds, then,
\begin{equation}\label{eq:AugStrnCnvIneq}
\frac{\mu}{2}\|x - x^*_\L\|^2 + \frac{\eta}{2}\|R(x,\lambda;\alpha,\eta)\|^2 \leq \langle R(x,\lambda;\alpha,\eta), x - x^*_\L \rangle.
\end{equation}
\end{proposition}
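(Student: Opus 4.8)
The statement is a standard "descent/co-coercivity" lemma for projected gradient steps on an $L_\mathcal{L}$-smooth convex (resp. $\mu$-strongly convex) function, here with $L_\mathcal{L} = L + \alpha\|A\|^2$, applied to $x \mapsto \mathcal{L}(x,\lambda;\alpha)$ on the convex set $\X$. The plan is to reduce both inequalities to known facts about the projection operator and the prox/gradient-mapping, exactly as in \cite[Corollary~2.3.2]{nesterov2018lectures}, and then simply record the specialization to $\mathcal{L}$.

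\textbf{Step 1: smoothness of $\mathcal{L}$.} By \cref{eq:LagLipschitz}, $\nabla_x \mathcal{L}(\cdot,\lambda;\alpha)$ is Lipschitz with constant $L + \alpha\|A\|^2$ on $\X$, and by \Cref{ass:Contdiff} the map $x\mapsto \mathcal{L}(x,\lambda;\alpha)$ is convex on $\X$. Hence the choice $0 < \eta < (L+\alpha\|A\|^2)^{-1}$ makes $\eta$ a valid step size for the gradient-mapping estimates. For \cref{eq:AugStrnCnvIneq} we additionally invoke \Cref{ass:StrcvxALwrtx} to get $\mu$-strong convexity.

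\textbf{Step 2: the descent inequality \cref{eq:AugLipIneq}.} Write $x^+ = \proj_\X(x - \eta \nabla_x\mathcal{L}(x,\lambda;\alpha))$, so that $R(x,\lambda;\alpha,\eta) = (x^+ - x)/\eta$. The $L_\mathcal{L}$-smoothness quadratic upper bound gives $\mathcal{L}(x^+) \le \mathcal{L}(x) + \langle \nabla_x\mathcal{L}(x), x^+ - x\rangle + \tfrac{L_\mathcal{L}}{2}\|x^+-x\|^2$, and the optimality characterization of the projection, $\langle x - \eta\nabla_x\mathcal{L}(x) - x^+, y - x^+\rangle \le 0$ for all $y\in\X$, applied at $y=x$, yields $\langle \nabla_x\mathcal{L}(x), x^+ - x\rangle \le -\tfrac{1}{\eta}\|x^+-x\|^2$. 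Combining, $\mathcal{L}(x^+) \le \mathcal{L}(x) - (\tfrac{1}{\eta} - \tfrac{L_\mathcal{L}}{2})\|x^+-x\|^2 \le \mathcal{L}(x) - \tfrac{\eta}{2}\|R(x,\lambda;\alpha,\eta)\|^2$, using $\eta < L_\mathcal{L}^{-1}$ so that $\tfrac1\eta - \tfrac{L_\mathcal{L}}{2} \ge \tfrac1{2\eta}$. Since $\mathcal{L}(x^+) \ge \mathcal{L}(x^*_\mathcal{L})$ by definition of $x^*_\mathcal{L}$, rearranging gives \cref{eq:AugLipIneq}.

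\textbf{Step 3: the strongly convex inequality \cref{eq:AugStrnCnvIneq}.} Under \Cref{ass:StrcvxALwrtx}, apply the projection optimality inequality at $y = x^*_\mathcal{L}$: $\langle x - \eta\nabla_x\mathcal{L}(x) - x^+, x^*_\mathcal{L} - x^+\rangle \le 0$, i.e. $\langle \eta R, x^*_\mathcal{L} - x - \eta R\rangle \ge -\eta\langle \nabla_x\mathcal{L}(x), x^*_\mathcal{L} - x - \eta R\rangle + \langle \eta R, \cdot\rangle$; more directly, rearrange to $\langle R, x - x^*_\mathcal{L}\rangle \ge \eta\|R\|^2 + \langle \nabla_x\mathcal{L}(x), x^+ - x^*_\mathcal{L}\rangle$. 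Then bound $\langle \nabla_x\mathcal{L}(x), x^+ - x^*_\mathcal{L}\rangle$ from below using $\mu$-strong convexity of $\mathcal{L}$: $\mathcal{L}(x^+) \ge \mathcal{L}(x) + \langle \nabla_x\mathcal{L}(x), x^+ - x\rangle + \tfrac\mu2\|x^+-x\|^2$ and $\mathcal{L}(x^*_\mathcal{L}) \ge \mathcal{L}(x) + \langle\nabla_x\mathcal{L}(x), x^*_\mathcal{L}-x\rangle + \tfrac\mu2\|x^*_\mathcal{L}-x\|^2$, combined with $\mathcal{L}(x^+)\ge\mathcal{L}(x^*_\mathcal{L})$, to extract $\langle \nabla_x\mathcal{L}(x), x^+ - x^*_\mathcal{L}\rangle \ge \tfrac\mu2\|x-x^*_\mathcal{L}\|^2 - \tfrac{\eta}{2}\|R\|^2$ after using $\eta < L_\mathcal{L}^{-1} \le \mu^{-1}$. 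Substituting back collects the terms into $\tfrac\mu2\|x-x^*_\mathcal{L}\|^2 + \tfrac\eta2\|R\|^2 \le \langle R, x-x^*_\mathcal{L}\rangle$, which is \cref{eq:AugStrnCnvIneq}.

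\textbf{Main obstacle.} There is no deep difficulty here — the result is a direct transcription of \cite[Corollary~2.3.2]{nesterov2018lectures} with $f$ replaced by $\mathcal{L}(\cdot,\lambda;\alpha)$ and the Lipschitz constant $L$ replaced by $L+\alpha\|A\|^2$. The only thing that needs care is the bookkeeping of inner products in Step 3: one must apply the projection's variational inequality at the correct test point ($x^*_\mathcal{L}$ rather than $x$) and track the $-\tfrac\eta2\|R\|^2$ slack terms coming from the smoothness bounds so that they end up on the correct side with the stated constant. The constraint $\eta < (L+\alpha\|A\|^2)^{-1}$ is exactly what is needed for all the slack coefficients to have the right sign, and since $\mu \le L \le L + \alpha\|A\|^2$ (strong convexity parameter never exceeds the smoothness parameter), the inequality $\eta < \mu^{-1}$ used in Step 3 is automatic.
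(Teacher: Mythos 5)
The paper offers no proof of this proposition at all --- it is stated as a known result and attributed to \cite[Corollary 2.3.2]{nesterov2018lectures} --- so your derivation supplies more detail than the paper does, and your overall plan (specialize the standard projected-gradient lemmas to $x\mapsto\L(x,\lambda;\alpha)$, which is convex with an $(L+\alpha\|A\|^2)$-Lipschitz gradient by \cref{eq:LagLipschitz}) is exactly the intended one. Your Step~2 is correct as written and establishes \cref{eq:AugLipIneq}.

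Step~3, however, contains a sign error in the rearrangement of the projection's variational inequality. Writing $x^+=\proj_\X(x-\eta\nabla_x\L(x,\lambda;\alpha))$ so that $x-x^+=-\eta R$, the inequality $\langle x-\eta\nabla_x\L(x,\lambda;\alpha)-x^+,\,x^*_\L-x^+\rangle\le 0$ rearranges (after dividing by $-\eta$ and using $x^*_\L-x^+=(x^*_\L-x)-\eta R$) to $\langle R,\,x^*_\L-x\rangle\ \ge\ \eta\|R\|^2+\langle\nabla_x\L(x,\lambda;\alpha),\,x^+-x^*_\L\rangle$, i.e.\ the inner product comes out with $x^*_\L-x$, not $x-x^*_\L$ as you wrote. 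Carrying your (correct) lower bound $\langle\nabla_x\L(x,\lambda;\alpha),x^+-x^*_\L\rangle\ge\tfrac{\mu}{2}\|x-x^*_\L\|^2-\tfrac{\eta}{2}\|R\|^2$ through the corrected inequality yields $\tfrac{\mu}{2}\|x-x^*_\L\|^2+\tfrac{\eta}{2}\|R\|^2\le\langle R,\,x^*_\L-x\rangle$. This is what Nesterov's corollary gives once one accounts for the fact that the paper's $R$ in \cref{eq:redgrad} is the \emph{negative} of Nesterov's gradient mapping; the version printed in \cref{eq:AugStrnCnvIneq} cannot hold as stated, since in the unconstrained case $R=-\nabla_x\L(x,\lambda;\alpha)$ and convexity forces $\langle R,x-x^*_\L\rangle\le 0$ while the left-hand side is nonnegative. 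In other words, your algebraic slip reproduces the paper's typo rather than the correct statement. The discrepancy is harmless downstream --- the only use of \cref{eq:AugStrnCnvIneq} in the paper is through Cauchy--Schwarz in \cref{eq:FeasToGrad}, which is insensitive to the sign --- but your proof should carry the sign correctly and record the corrected form. (Your closing remark that $\eta<\mu^{-1}$ is needed is also superfluous: only $\eta(L+\alpha\|A\|^2)<1$ enters the estimate.)
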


\begin{remark}[Alternative Sampling Condition]
    An alternative sampling condition is proposed in \cite{beiser2020adaptive} that would replace the right-hand side of~\cref{eq:norm_ideal_auglag} by a constant factor times the squared norm of the projected gradient~\cref{eq:redgrad}.
    Following the procedure above, we would then arrive at a somewhat simpler inequality taking the place of \cref{eq:Normcondition1proj}, namely,
    \begin{align}
    \label{eq:Normcondition1proj_alt}
                                \frac{\E_{\zeta}[\|\nabla F(x_{k,t},\zeta) - \nabla f(x_{k,t})\|^2]}{|S_{k,t}|}
                &\leq \tilde{\theta}_g^2 \|R(x_{k,t},\lambda_{k};\alpha,\eta)]\|^2
        \,,
    \end{align}
    for some $\tilde{\theta}_g^2 > 0$.
    It turns out     that the two conditions \cref{eq:Normcondition1proj,eq:Normcondition1proj_alt} are equivalent in the sense that their right-hand sides bound each other from above and below:
    \begin{equation}
    \label{eq:Equivalence}
        \frac{\|R(x_{k,t},\lambda_k;\alpha,\eta)\|}{1 + \theta_g}
        \leq
        \|\E_{k,t}[R_{S_{k,t}}(x_{k,t},\lambda_{k};\alpha,\eta)\|
        \leq
        \frac{\|R(x_{k,t},\lambda_k;\alpha,\eta)\|}{1 - \theta_g}
        \,.
    \end{equation}
    Indeed, note that
    \begin{align*}
    \|\E_{k,t}[R_{S_{k,t}}(x_{k,t},\lambda_{k};\alpha,\eta) &- R(x_{k,t},\lambda_{k};\alpha,\eta)] \|^2 \\
    &\leq \E_{k,t}[\|R_{S_{k,t}}(x_{k,t},\lambda_{k};\alpha,\eta) - R(x_{k,t},\lambda_{k};\alpha,\eta) \|^2] \\
    &= \eta^{\ck{-2}}\E_{k,t}\Big[\|\proj_{\X}(x_{k,t} - \eta \nabla_x \mathcal{L}_{S_{k,t}}(x_{k,t},\lambda_{k};\alpha)) \\
     &\qquad\qquad\qquad\qquad- \proj_{\X}(x_{k,t} - \eta \nabla_x \mathcal{L}(x_{k,t},\lambda_{k};\alpha))\|^2\Big] \\
    &\leq \E_{k,t}\left[\|\nabla F_{S_{k,t}}(x_{k,t}) - \nabla f(x_{k,t})\|^2\right] \\
    &\leq \theta_g^2 \|\E_{k,t}[R_{S_{k,t}}(x_{k,t},\lambda_{k};\alpha,\eta)]\|^2
    \,,
    \end{align*}
    where the first line follows from Jensen's inequality, the proceeding equality is due to \cref{eq:redgradsk,eq:redgrad}, the second inequality follows from the non-expansiveness property of projections \cite{nesterov2018lectures}, and the last inequality is due to \cref{eq:norm_ideal_var}. Rearranging terms and using the reverse triangle inequality, $\|a\| - \|b\| \leq \|a - b\|$, for all $a,b \in \R^n$, we arrive at~\cref{eq:Equivalence}.
    Moreover, both conditions \cref{eq:Normcondition1proj,eq:Normcondition1proj_alt} lead to identical practical algorithms; cf.~\Cref{sec:practical}.
    We choose to work with~\cref{eq:Normcondition1proj} instead of~\cref{eq:Normcondition1proj_alt} because it leads to a simpler presentation of the complexity theory in \Cref{sec:theory}.
\end{remark}

\subsection{Inexactness Conditions} \label{subsec: tolr}
The efficiency of an inexact augmented Lagrangian framework depends on its inexactness conditions. These conditions must balance the accuracy of the solution computed at each (outer) iteration and the overall computational efficiency.
Due to the stochastic nature of the iterates obtained by our choice of the subproblem solver (cf.~\Cref{subsec: adasample}), these inexactness conditions must also be stochastic.
We now propose inexactness conditions that meet these requirements based on the Moreau envelope perspective developed in \Cref{subsec:equivalence}.
Recall from \Cref{subsec:equivalence} that the exact augmented Lagrangian method can be interpreted as a gradient descent method applied to the Moreau envelope of the (negative) dual function. Therefore, the inexact augmented Lagrangian method leads to  inexact dual variable updates. That is, from the dual update (\texttt{line 3} in \cref{alg:StochALwInexact}), \cref{eq:dualUpdate}, and \cref{eq:gradform}, we have, 
\begin{align}    \lambda_{k+1} &= \lambda_k - \alpha c(x_k) \nonumber \\
    &= \lambda_k - \alpha c(x^*_k) + \alpha c(x^*_k) - \alpha c(x_k)  \nonumber \\
    &= \lambda_k - \alpha \nabla q_{\alpha}(\lambda_k) +\alpha \epsilon_k
    \label{eq:UpdateruleGDonMoreauInexact},
        \end{align}
where $x^*_k$ is an exact minimizer of \cref{eq:augLagProb} and $\epsilon_k \defeq c(x^*_k) - c(x_k)$. \rev{We note that we have not imposed any structure on the subproblems~\cref{eq:augLagProb} that would give the update error $\epsilon_k$ zero mean}; i.e., $\E_k [\epsilon_k] \neq 0$, where
\begin{equation}
\label{eq:ConditionalExpectation_k}
    \E_k[\,\cdot\,] = \E[\,\cdot\,|\lambda_k]
\end{equation}
is the expected value operator conditioned on the iterates up until $\lambda_k$.
Specifically, $\E_{k}$ is the conditional expectation conditioned on the filtration $$\mathbb{T}_{k} = \sigma(\lambda_0,x_{-1,0},S_{0,0},\ldots,S_{0, T_{0}},\ldots, S_{k-1, 0},\ldots, S_{k-1, T_{k-1}})\,,$$ where $T_{i}$ denotes the number of inner iterations performed at the outer iteration $i$.
In turn, we choose to view the additive update rule~\cref{eq:UpdateruleGDonMoreauInexact} as a \emph{biased} stochastic gradient estimator update.

It is natural to consider an additional sampling condition when the sampling error can control the bias; cf.~\cite[Condition 2]{beiser2020adaptive}. \rbreview{Such additional conditions are also common in trust-region methods \cite{bastin2006adaptive,blanchet2019convergence,chen2018stochastic}.}
In the present setting, however, the error is due to the subproblem solver. To address this, we aim to design a tolerance condition for terminating the inner loop.
The following condition allows us to control the inexactness of the Moreau envelope gradient estimates in the dual update:
\begin{condition}[Tolerance Condition I]\label{cond:tolr-I}
	 For any given $\theta_e \in [0,1)$ and $\tau_k \geq 0$ with $\lim_{k \rightarrow \infty} \tau_k = 0$,  
  	\begin{align}\label{eq:TolrNormcondition}
    \E_k\left[\|c(x^*_k) - c(x_k)\|^2\right] \leq \theta_e^2 \|c(x^*_k)\|^2 + \tau_k,
    \end{align}
    where $x^*_k$ is a minimizer of \cref{eq:augLagProb}. \end{condition}
From the Moreau envelope perspective \cref{eq:UpdateruleGDonMoreauInexact}, this condition ensures that the expected squared norm of the error, $\E_k[\|\epsilon_k\|^2]$, is controlled by the squared norm of the gradient of $q_\alpha(\lambda_k)$ and a vanishing positive constant $\tau_k$. That is, from \cref{eq:gradform,eq:TolrNormcondition,eq:UpdateruleGDonMoreauInexact}, it follows that 
\begin{align*}
    \E_k[\|\epsilon_k\|^2] &= \E_k\left[\|c(x^*_k) - c(x_k)\|^2\right] 
    \leq \theta_e^2 \|c(x^*_k)\|^2  + \tau_k     = \theta_e^2 \|\nabla q_{\alpha}(\lambda_k)\|^2 + \tau_k.
\end{align*}
Likewise, this condition ensures that inexact gradient information can be employed far away from the solution (i.e., when the gradient's norm is large).
Meanwhile, it also ensures accurate gradient information near the solution (i.e., when the gradient norm is small). 
Although this condition is derived from controlling the inexactness in the dual update, it directly relates to inexactness in the minimization of \cref{eq:augLagProb} (cf.~\cref{eq:FeasToFunc}).  Therefore, we can replace \Cref{cond:tolr-I} by the following alternative condition:
\begin{condition}[Tolerance Condition II]\label{cond:tolr-II}
	For any given $\theta_e \in [0,1)$ and $\tau_k \geq 0$ with $\lim_{k \rightarrow \infty} \tau_k = 0$,  
	\begin{align}\label{eq:TolrNormconditionII}
    \E_k\left[\L(x_k, \lambda_k;\alpha) - \L(x^*_k,\lambda_k;\alpha)\right] \leq \frac{\alpha \theta_e^2 \|c(x^*_k)\|^2}{2} + \frac{\alpha\tau_k}{2},
    \end{align}
    where $x^*_k$ is a minimizer of \cref{eq:augLagProb}. \end{condition}
\Cref{cond:tolr-II} controls the error in the minimization of \cref{eq:augLagProb} and directly implies \Cref{cond:tolr-I}. Indeed, using \cref{eq:FeasToFunc} and \cref{eq:TolrNormconditionII}, we find
\begin{equation}\label{eq:cond2Tocond1}
    \E_k\left[\|c(x^*_k) - c(x_k)\|^2\right] \leq \frac{2}{\alpha}\E_k\left[\L(x_k, \lambda_k;\alpha) - \L(x^*_k,\lambda_k;\alpha)\right]     \leq \theta_e^2 \|c(x^*_k)\|^2 + \tau_k.
\end{equation}
When the augmented Lagrangian functions are strongly convex, we can also control the norm of the projected gradient \cref{eq:redgrad}. That is, by \Cref{ass:StrcvxALwrtx}, \cref{eq:LipschitzG}, and \cref{eq:AugStrnCnvIneq}, we have 
\begin{equation}\label{eq:FeasToGrad}
    \|c(x^*_k) - c(x_k)\|^2 \leq \|A\|^2 \|x^*_k - x_k\|^2 
    \leq \frac{4 \|A\|^2}{\mu^2}\| R(x_{k},\lambda_{k};\alpha,\eta) \|^2. 
\end{equation}
Therefore, we can impose the following alternate condition when $f(x)$ is strongly convex.
\begin{condition}[Tolerance Condition III]\label{cond:tolr-III}
        For any given $\Tilde \theta_e \in [0,1)$ and $\Tilde \tau_{k} \geq 0$ with $\lim_{k \rightarrow \infty} \Tilde \tau_{k} = 0$,  
	\begin{align}\label{eq:TolrNormconditionIII}
        \E_k\left[\| R(x_{k},\lambda_{k};\alpha,\eta) \|^2\right] \leq \Tilde \theta_e^2 \|c(x^*_k)\|^2 + \Tilde \tau_{k},
    \end{align}
    where $x^*_k$ is a minimizer of \cref{eq:augLagProb}. \end{condition}
\Cref{cond:tolr-III} also controls the error in the subproblem \cref{eq:augLagProb} and implies \Cref{cond:tolr-I}. Indeed, set $\Tilde 
\theta_e \leq \frac{\mu \theta_e}{2\|A\|}$ and $\Tilde \tau_{k} \leq \frac{\mu^2\tau_k}{4\|A\|^2}$.
Then, using \cref{eq:FeasToGrad} and \cref{eq:TolrNormconditionIII}, it holds that
\begin{equation}\label{eq:cond3Tocond1}
    \E_k\left[\|c(x^*_k) - c(x_k)\|^2\right] \leq \frac{4\|A\|^2}{\mu^2}\E_k\left[\|R(x_{k},\lambda_{k};\alpha,\eta)\|^2\right]     \leq \theta_e^2 \|c(x^*_k)\|^2 + \tau_k.
\end{equation}
\begin{remark}
\rbreview{We observe that conditions similar to \Cref{cond:tolr-I,cond:tolr-II,cond:tolr-III} have been proposed in the literature (cf.~\cite{rockafellar1976augmented, lan2016iteration, xu2021iteration, li2021augmented}). The primary advantage of employing our conditions lies in their adaptive control over the subproblem error. Although verifying \Cref{cond:tolr-I,cond:tolr-II,cond:tolr-III} for a stochastic subproblem solver can be challenging because they each require evaluating deterministic quantities, these conditions can still help us gain insight into the errors permitted in the algorithm while retaining desirable convergence properties. Furthermore, these conditions can guide the development of practical algorithms.
}
\end{remark}

\section{Theory} \label{sec:theory}
We now establish theoretical convergence guarantees and total sam-ple complexity results for the proposed inexact augmented Lagrangian algorithmic framework when the inexactness conditions proposed in \Cref{subsec: tolr} are satisfied. We use the following notation for the full expectation:
\begin{equation}
\label{eq:ExpectationIdentity}
    \E[\,\cdot\,] = \E_0[\,\E_1[\,\cdots \E_{k}[\,\cdot\,]]].
\end{equation}

\subsection{Convergence Results}\label{subsec:convergence}

We start by establishing a technical lemma.

\begin{lemma}\label{lemma:descentconditionofmoreauinexact} 
Suppose \Cref{ass:Contdiff,ass:Existenceofsecondorderpoint} hold. For any $x_{-1}, \lambda_0$ and $\alpha > 0$, let $\{x_k, \lambda_k\}$ be the sequence of primal-dual iterates generated by \cref{alg:StochALwInexact}. Then, for all $k \in \N$, 
\begin{align}\label{eq:descentRuleMoreauInexact}
    q_{\alpha}(\lambda_{k+1}) & \leq q_{\alpha}(\lambda_k) - \frac{\alpha}{2} \|\nabla q_{\alpha}(\lambda_k)\|^2 + \frac{\alpha}{2 } \|\epsilon_k\|^2,
\end{align}
where $\epsilon_k = c(x^*_k) - c(x_k)$ and $x^*_k$ is a minimizer of \cref{eq:augLagProb}.
\end{lemma}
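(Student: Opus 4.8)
The plan is to exploit the fact, established in \Cref{subsec:equivalence}, that the exact augmented Lagrangian update is precisely a gradient descent step on the Moreau envelope $q_\alpha$ with step size $\alpha$, and that $\nabla q_\alpha$ is Lipschitz with constant $L_\alpha = \alpha^{-1}$ by \Cref{lemma:MoreauProperties}(ii). The inexact update \cref{eq:UpdateruleGDonMoreauInexact} reads $\lambda_{k+1} = \lambda_k - \alpha\nabla q_\alpha(\lambda_k) + \alpha\epsilon_k$, so the statement is just the standard ``descent lemma with gradient error'' applied to this step.

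First I would invoke the descent lemma for the $L_\alpha$-smooth function $q_\alpha$: for any $u,v\in\R^m$,
\begin{equation*}
q_\alpha(v) \leq q_\alpha(u) + \langle \nabla q_\alpha(u), v-u\rangle + \frac{1}{2\alpha}\|v-u\|^2.
\end{equation*}
Then I would set $u = \lambda_k$ and $v = \lambda_{k+1}$, substituting $\lambda_{k+1}-\lambda_k = -\alpha\nabla q_\alpha(\lambda_k) + \alpha\epsilon_k$. The linear term becomes $-\alpha\|\nabla q_\alpha(\lambda_k)\|^2 + \alpha\langle\nabla q_\alpha(\lambda_k),\epsilon_k\rangle$, and the quadratic term becomes $\frac{\alpha}{2}\|\nabla q_\alpha(\lambda_k) - \epsilon_k\|^2 = \frac{\alpha}{2}\|\nabla q_\alpha(\lambda_k)\|^2 - \alpha\langle\nabla q_\alpha(\lambda_k),\epsilon_k\rangle + \frac{\alpha}{2}\|\epsilon_k\|^2$. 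Adding these, the cross terms $\pm\alpha\langle\nabla q_\alpha(\lambda_k),\epsilon_k\rangle$ cancel exactly, leaving $q_\alpha(\lambda_{k+1}) \leq q_\alpha(\lambda_k) - \frac{\alpha}{2}\|\nabla q_\alpha(\lambda_k)\|^2 + \frac{\alpha}{2}\|\epsilon_k\|^2$, which is \cref{eq:descentRuleMoreauInexact}.

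The only genuine subtlety — and the step I would be most careful about — is justifying that the descent/smoothness framework actually applies here: I need $q_\alpha$ to be finite-valued and differentiable with $\alpha^{-1}$-Lipschitz gradient on all of $\R^m$, which requires knowing $q$ is proper, closed, convex, and in particular that the inner minimization defining $q(\lambda)$ in \cref{eq:dualFunction} attains its value for all $\lambda$. This is already supplied by \Cref{ass:Contdiff,ass:Existenceofsecondorderpoint} together with the compactness of $\X$ (which makes $\min_{x\in\X}\ell(x,\lambda)$ attained), \cref{eq:finitedual} (which gives boundedness below), and \Cref{lemma:MoreauProperties}; so no new work is needed there. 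I would also note that the identity $\nabla q_\alpha(\lambda_k) = c(x_k^*)$ from \cref{eq:gradform} makes the term $-\frac{\alpha}{2}\|\nabla q_\alpha(\lambda_k)\|^2 = -\frac{\alpha}{2}\|c(x_k^*)\|^2$ if one prefers the primal-side expression, and that $\epsilon_k = c(x_k^*) - c(x_k)$ is exactly the error appearing in \cref{eq:UpdateruleGDonMoreauInexact}. Everything else is the routine algebra above, so the proof is short; no expectations are taken at this stage — the inequality holds deterministically for every realization, and the conditional expectations enter only in the subsequent results that combine this lemma with \Cref{cond:tolr-I}.
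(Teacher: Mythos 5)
Your proposal is correct and follows essentially the same route as the paper: both apply the descent lemma to the $\alpha^{-1}$-smooth function $q_\alpha$ at the inexact step $\lambda_{k+1}-\lambda_k = -\alpha\nabla q_\alpha(\lambda_k)+\alpha\epsilon_k$ and observe that the cross terms cancel exactly. Your extra care about the well-posedness of $q_\alpha$ and the deterministic (no-expectation) nature of the inequality is consistent with, though more explicit than, the paper's presentation.
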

\begin{proof}
From the dual update rule (\texttt{line 3} in \Cref{alg:StochALwInexact}), \cref{eq:dualUpdate}, and \cref{eq:gradform}, it follows that
\begin{align*}
    \lambda_{k+1} 
    &= \lambda_k - \alpha \nabla q_{\alpha}(\lambda_k) +\alpha \epsilon_k
    \,.
        \end{align*}
Using the Lipschitz continuity of $\nabla q_{\alpha}(\lambda)$ with Lipschitz constant $L_\alpha = \alpha^{-1}$ (cf.~\cref{eq:lipschitz_moreau}) and the descent lemma \cite{bertsekas2003convex}, we have,
\begin{align*}
    q_{\alpha}(\lambda_{k+1}) &\leq q_{\alpha}(\lambda_k) - \alpha \langle \nabla q_{\alpha}(\lambda_k) - \epsilon_k, \nabla q_{\alpha}(\lambda_k) \rangle +  \frac{\alpha^2 L_\alpha }{2}\|\nabla q_{\alpha}(\lambda_k) - \epsilon_k\|^2\\
    &= q_{\alpha}(\lambda_k) - \frac{\alpha}{2} \|\nabla q_{\alpha}(\lambda_k)\|^2 + \frac{\alpha}{2 } \|\epsilon_k\|^2
    \,,
        \end{align*}
as necessary.
\end{proof}
We are now ready to establish convergence results for the inexactness conditions developed in \Cref{subsec: tolr}. 
\begin{theorem}
\label{thm:Sublinearconv_norm}
Suppose \Cref{ass:Contdiff,ass:Existenceofsecondorderpoint,ass:variance} hold. For any $x_{-1}, \lambda_0$ and $\alpha > 0$, let $\{(x_k, \lambda_k)\}$ be the sequence of primal-dual iterates generated by \cref{alg:StochALwInexact}.
Furthermore, let $\theta_e \in [0,1)$ and $\tau_k \geq 0$ such that $\tau_0^{-1}\sum_{k=0}^{\infty} \tau_k = a_\infty < \infty$.
If any of the following three statements hold at each iteration $k \in N$:
\begin{enumerate}[(a)]
    \item the primal iterates $x_k$ satisfy \Cref{cond:tolr-I};
    \label{item:tolr-I}
    \item the primal iterates $x_k$ satisfy \Cref{cond:tolr-II}; or
    \label{item:tolr-II}
    \item \Cref{ass:StrcvxALwrtx} also holds and the primal iterates $x_k$ satisfy \Cref{cond:tolr-III} with $\Tilde \theta_{e} \leq \frac{\mu \theta_e}{2\|A\|}$ and $\Tilde \tau_{k} \leq \frac{\mu^2\tau_k}{4\|A\|^2}$;
    \label{item:tolr-III}
\end{enumerate}
then 
\begin{align*}
\lim_{k \rightarrow \infty} \E[\|c(x_k)\|^2] = 0.  \end{align*}
Moreover, for any $K \in \N$, we have that,
\begin{align}\label{eq:mincxksublconvresult}
    \min_{0\leq k \leq K-1} \E[\|c(x_k)\|^2] \leq \frac{4(1+\theta_e^2)}{\alpha (1 - \theta_e^2) K}  [q_{\alpha}(\lambda_0) - q^*] + \frac{4}{1-\theta_e^2} \frac{\tau_0 a_\infty}{K},
\end{align}
where $q^* > -\infty$ is defined in \cref{eq:finitedual}.  In addition, if either \ref{item:tolr-II} or \ref{item:tolr-III} is satisfied, then
\begin{equation}
\label{eq:Sublinearconv_norm_stationarity}
\lim_{k \rightarrow \infty} \E\left[\left\|\frac{\proj_{\X} (x_k - \eta \nabla \ell_x(x_k,\lambda_{k+1})) - x_k}{\eta}\right\|^2\right] = 0
\,,
\end{equation}
for every $0< \eta < \frac{1}{L + \alpha \|A\|^2}$.

\end{theorem}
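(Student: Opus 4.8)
The plan is to reduce all three cases to \Cref{cond:tolr-I} and then telescope the Moreau-envelope descent inequality of \Cref{lemma:descentconditionofmoreauinexact}. The starting observation is that cases \ref{item:tolr-II} and \ref{item:tolr-III} each imply \Cref{cond:tolr-I}: this is precisely \cref{eq:cond2Tocond1} and \cref{eq:cond3Tocond1} under the stated parameter choices, so it suffices to assume \Cref{cond:tolr-I} for the first two assertions. A second key point is a measurability fact: $x_k^*$ is an \emph{exact} minimizer of \cref{eq:augLagProb}, which depends only on $\lambda_k$, so $x_k^*$ (and hence $\|c(x_k^*)\|^2 = \|\nabla q_\alpha(\lambda_k)\|^2$, using \cref{eq:gradform}) is $\mathbb{T}_k$-measurable and can be pulled out of $\E_k[\,\cdot\,]$.

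Applying $\E_k[\,\cdot\,]$ to \cref{eq:descentRuleMoreauInexact} and bounding $\E_k[\|\epsilon_k\|^2] \le \theta_e^2\|c(x_k^*)\|^2 + \tau_k$ via \Cref{cond:tolr-I} gives $\E_k[q_\alpha(\lambda_{k+1})] \le q_\alpha(\lambda_k) - \tfrac{\alpha(1-\theta_e^2)}{2}\|c(x_k^*)\|^2 + \tfrac{\alpha\tau_k}{2}$. Taking full expectation, summing over $k = 0,\dots,K-1$, and using $\E[q_\alpha(\lambda_K)] \ge q^*$ from \cref{eq:finitedual} together with $\sum_k\tau_k = \tau_0 a_\infty$ yields $\sum_{k=0}^{K-1}\E[\|c(x_k^*)\|^2] \le \tfrac{2}{\alpha(1-\theta_e^2)}(q_\alpha(\lambda_0)-q^*) + \tfrac{\tau_0 a_\infty}{1-\theta_e^2}$; letting $K\to\infty$ shows $\E[\|c(x_k^*)\|^2]\to 0$. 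To transfer this to $c(x_k)$, write $c(x_k) = c(x_k^*) - \epsilon_k$ and use $\|a+b\|^2\le 2\|a\|^2+2\|b\|^2$ with \Cref{cond:tolr-I} to get $\E[\|c(x_k)\|^2] \le 2(1+\theta_e^2)\E[\|c(x_k^*)\|^2] + 2\tau_k$, which proves $\E[\|c(x_k)\|^2]\to 0$. Bounding $\min_{0\le k\le K-1}\E[\|c(x_k)\|^2]$ by the average $\tfrac1K\sum_{k=0}^{K-1}\E[\|c(x_k)\|^2]$ and substituting the two summation bounds gives exactly \cref{eq:mincxksublconvresult}; in particular, the coefficient $\tfrac{4}{1-\theta_e^2}$ on $\tau_0 a_\infty/K$ arises as $\tfrac{2(1+\theta_e^2)}{1-\theta_e^2}+2$.

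For the stationarity assertion \cref{eq:Sublinearconv_norm_stationarity}, the decisive algebraic identity is that the dual update is designed so that $\nabla\ell_x(x_k,\lambda_{k+1}) = \nabla f(x_k) - A^T\lambda_{k+1} = \nabla f(x_k) - A^T(\lambda_k - \alpha c(x_k)) = \nabla_x\mathcal{L}(x_k,\lambda_k;\alpha)$; hence the quantity inside \cref{eq:Sublinearconv_norm_stationarity} equals $R(x_k,\lambda_k;\alpha,\eta)$ from \cref{eq:redgrad}. Under case \ref{item:tolr-III}, \Cref{cond:tolr-III} directly bounds $\E_k[\|R(x_k,\lambda_k;\alpha,\eta)\|^2] \le \tilde\theta_e^2\|c(x_k^*)\|^2 + \tilde\tau_k$, whose full expectation vanishes as $k\to\infty$ by the convergence just shown and $\tilde\tau_k\to 0$. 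Under case \ref{item:tolr-II}, I would instead invoke \cref{eq:AugLipIneq}, namely $\|R(x_k,\lambda_k;\alpha,\eta)\|^2 \le \tfrac{2}{\eta}\big(\mathcal{L}(x_k,\lambda_k;\alpha) - \mathcal{L}(x_k^*,\lambda_k;\alpha)\big)$ for $0<\eta<(L+\alpha\|A\|^2)^{-1}$, take $\E_k$, and apply \Cref{cond:tolr-II} to bound the right-hand side by $\tfrac{\alpha\theta_e^2}{\eta}\|c(x_k^*)\|^2 + \tfrac{\alpha\tau_k}{\eta}$, which again vanishes in full expectation.

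The main obstacle I anticipate is the conditioning/measurability bookkeeping — in particular being careful that $x_k^*$ and $\|c(x_k^*)\|^2$ are $\mathbb{T}_k$-measurable, so that the descent inequality telescopes cleanly once full expectations are taken — together with spotting the identity $\nabla\ell_x(x_k,\lambda_{k+1}) = \nabla_x\mathcal{L}(x_k,\lambda_k;\alpha)$ that collapses the stationarity error onto the reduced gradient $R$. The remaining steps are routine telescoping and summation estimates.
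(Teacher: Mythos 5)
Your proposal is correct and follows essentially the same route as the paper: reduce all three cases to \Cref{cond:tolr-I} via \cref{eq:cond2Tocond1,eq:cond3Tocond1}, telescope the descent inequality of \Cref{lemma:descentconditionofmoreauinexact} after conditioning, transfer from $c(x_k^*)$ to $c(x_k)$ with the $2\|a\|^2+2\|b\|^2$ bound (your coefficient bookkeeping $\tfrac{2(1+\theta_e^2)}{1-\theta_e^2}+2=\tfrac{4}{1-\theta_e^2}$ matches), and collapse the stationarity error onto $R(x_k,\lambda_k;\alpha,\eta)$ via the identity $\nabla\ell_x(x_k,\lambda_{k+1})=\nabla_x\mathcal{L}(x_k,\lambda_k;\alpha)$ before invoking \cref{eq:AugLipIneq} for case (b) and \Cref{cond:tolr-III} for case (c). The only (harmless) difference is ordering: you sum the $\|c(x_k^*)\|^2$ bound and the transfer inequality separately rather than substituting one into the other before summing, and your explicit remark on the $\mathbb{T}_k$-measurability of $\|c(x_k^*)\|^2$ makes precise a step the paper uses implicitly.
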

\begin{proof}
If \ref{item:tolr-I}, \ref{item:tolr-II}, or \ref{item:tolr-III} holds, then \cref{eq:TolrNormcondition} holds as well due to \cref{eq:cond2Tocond1} and \cref{eq:cond3Tocond1}.
By \cref{lemma:descentconditionofmoreauinexact}, we have, 
\begin{align*}
q_{\alpha}(\lambda_{k+1}) & \leq q_{\alpha}(\lambda_k) - \frac{\alpha}{2} \|\nabla q_{\alpha}(\lambda_k)\|^2 + \frac{\alpha}{2 } \|\epsilon_k\|^2.
\end{align*}
Taking the conditional expectation~\cref{eq:ConditionalExpectation_k} of both sides and invoking \cref{eq:gradform,eq:TolrNormcondition}, we arrive at
\begin{align}
\E_k\left[q_{\alpha}(\lambda_{k+1})\right] &\leq q_{\alpha}(\lambda_k) - \frac{\alpha}{2} \|\nabla q_{\alpha}(\lambda_k)\|^2 + \frac{\alpha}{2 } \E_k\left[\|\epsilon_k\|^2\right] \nonumber \\
&\leq q_{\alpha}(\lambda_k) - \frac{\alpha(1 - \theta_e^2)}{2}\|c(x^*_k)\|^2 + \frac{\alpha}{2}\tau_k. \label{eq:grad_error}
\end{align}
Rearranging terms, we find
\begin{align}\label{eq:conv_exact_feas_bnd}
    \|c(x^*_k)\|^2 \leq \frac{2}{\alpha(1 - \theta_e^2)}  (q_{\alpha}(\lambda_k) - \E_k[q_{\alpha}(\lambda_{k+1})]) + \frac{1}{1-\theta_e^2}\tau_k. 
\end{align}
Therefore, 
\begin{align}
    \E_k\left[\|c(x_k)\|^2\right] &\leq 2\E_k\left[\|c(x_k) - c(x_k^*)\|^2\right] + 2\|c(x^*_k)\|^2 \nonumber \\
    &\leq 2(1+\theta_e^2)\|c(x^*_k)\|^2 + 2\tau_k \nonumber \\    &\leq \frac{4(1+\theta_e^2)}{\alpha(1 - \theta_e^2)}  (q_{\alpha}(\lambda_k) - \E_k[q_{\alpha}(\lambda_{k+1})]) + \frac{4}{1 - \theta_e^2}\tau_k \nonumber
    \,,
\end{align}
where the first inequality is due to $\|a + b\|^2 \leq 2\|a\|^2 + 2\|b\|^2$ for any $a,b \in \R^n$, the second inequality is due to \cref{eq:TolrNormcondition} and the last inequality follows from \cref{eq:conv_exact_feas_bnd}. 
Taking the full expectation~\cref{eq:ExpectationIdentity}, and summing the above inequality from $k=0$ to $K-1$, delivers
\begin{align*}
    \sum_{k=0}^{K-1}\E\left[\|c(x_k)\|^2\right] &\leq \frac{4(1+\theta_e^2)}{\alpha(1 - \theta_e^2)} \E [q_{\alpha}(\lambda_0) - q_{\alpha}(\lambda_{K})] + \frac{4}{1-\theta_e^2}\sum_{k=0}^{K-1}\tau_k \\
    &\leq \frac{4(1+\theta_e^2)}{\alpha(1 - \theta_e^2)} [q_{\alpha}(\lambda_0) - q^*] + \frac{4}{1-\theta_e^2}\tau_0a_{\infty}     \,,
\end{align*}
where the second inequality follows from~\cref{eq:dualMoreaequivalence} and the assumption $\sum_{k=0}^{\infty} \tau_k = \tau_0a_{\infty} < \infty$. Observe that $q^* > - \infty$ due to \cref{eq:finitedual}, which follows from \Cref{ass:Existenceofsecondorderpoint}. Therefore,  
\begin{align*}
\sum_{k=0}^{K-1}\E\left[\|c(x_k)\|^2\right] < \infty
\,,
\end{align*}
which implies that 
\begin{align*}
\lim_{k \rightarrow \infty}\E\left[\|c(x_k)\|^2\right] = 0.
\end{align*}
Moreover, 
\begin{align*}
 \min_{0\leq k \leq K-1} \E[\|c(x_k)\|^2] & \leq \frac{1}{K}\sum_{k = 0}^{K-1} \E[\|c(x_k)\|^2] \\
 & \leq \frac{4(1+\theta_e^2)}{\alpha (1 - \theta_e^2) K}  [q_{\alpha}(\lambda_0) - q^*] + \frac{4}{1-\theta_e^2} \frac{\tau_0a_\infty}{K}.
\end{align*}
We will now analyze the stationarity error~\cref{eq:Sublinearconv_norm_stationarity}. Using \cref{eq:LagrangianFunction}, $\lambda_{k+1} = \lambda_k - \alpha c(x_k)$, and \cref{eq:AugmentedLagrangian}, it follows that,
\begin{align*}
x_k - \eta \nabla \ell_x(x_k,\lambda_{k+1}) &= x_k - \eta(\nabla f(x_k) - \langle \lambda_{k+1}, \nabla c(x_k)) \rangle) \\
&=  x_k - \eta(\nabla f(x_k) - \langle \lambda_{k} - \alpha c(x_k), \nabla c(x_k) \rangle) \\
&= x_k - \eta \nabla_x \L(x_k, \lambda_k;\alpha). 
\end{align*}
Therefore, 
\begin{align}
\left\|\frac{\proj_{\X} (x_k - \eta \nabla \ell_x(x_k,\lambda_{k+1})) - x_k}{\eta}\right\|^2
&=
\left\|R(x_{k},\lambda_{k};\alpha,\eta)\right\|^2
.
\label{eq:optToAuglag}
\end{align}
If statement \ref{item:tolr-II} holds, then it follows from \cref{eq:AugLipIneq} that
\begin{align*}
\E\left[\left\|R(x_{k},\lambda_{k};\alpha,\eta)\right\|^2\right] &\leq \frac{2}{\eta}\left(\E\left[\L(x_k, \lambda_k;\alpha) - \L(x^*_k,\lambda_k;\alpha)\right]\right) \\
&\leq \frac{\alpha \theta_e^2}{\eta} \E[\|c(x^*_k)\|^2] + \frac{\alpha}{\eta}\tau_k.
\end{align*}
If statement \ref{item:tolr-III} holds, then
\begin{align*}
\E\left[\left\|R(x_{k},\lambda_{k};\alpha,\eta)\right\|^2\right] 
&\leq \Tilde \theta_{e}^2\E[\|c(x^*_k)\|^2] + \Tilde \tau_{k}.
\end{align*}
In turn, if either statements \ref{item:tolr-II} or \ref{item:tolr-III} holds, it follows that
\begin{align}\label{eq:rconvbnd}
\E\left[\left\|R(x_{k},\lambda_{k};\alpha,\eta)\right\|^2\right] 
&\leq \max\bigg\{\ \frac{\alpha \theta_e^2}{\eta},\Tilde \theta_{e}^2\bigg\}\E\big[\|c(x^*_k)\|^2\big] + \max\bigg\{ \frac{\alpha }{\eta} \tau_k, \Tilde \tau_{k}\bigg\}.
\end{align}
Taking the full expectation and summing the inequality \cref{eq:conv_exact_feas_bnd} from $k=0$ to $K-1$, and observing that $q^* > -\infty$, we arrive at
\begin{align}
\sum_{k=0}^{K-1}\E\left[\|c(x_k^*)\|^2\right] \leq \frac{2}{\alpha(1 - \theta_e^2)} [q_{\alpha}(\lambda_0) - q^*] + \frac{1}{1-\theta_e^2}\tau_0a_{\infty} < \infty,
\end{align}
which implies that 
\begin{align}\label{eq:cxstarconv}
\lim_{k \rightarrow \infty}\E\left[\|c(x_k^*)\|^2\right] = 0.
\end{align}
Taking limits on both sides of \cref{eq:rconvbnd} and using \cref{eq:cxstarconv} completes the proof. \end{proof}
\Cref{thm:Sublinearconv_norm} establishes that the \emph{expected} {feasibility error} vanishes as $k\to 0$, and meanwhile, the \emph{smallest} {feasibility error} converges to zero at a sublinear rate.\linebreak Moreover, the {stationarity error} also converges to zero \emph{in expectation} when either \Cref{cond:tolr-II} or \Cref{cond:tolr-III} holds. However, the theorem does not guarantee any rate of convergence of the {stationarity error}. To establish such a result, we can perform one additional update at the iterate at which the expected feasibility error attains a minimum. \begin{corollary}\label{corr:postop}
Suppose \Cref{ass:Contdiff,ass:Existenceofsecondorderpoint,ass:variance} hold. Let $k_*$ be the iteration number at which $\min_{0\leq k \leq K-1} \E[\|c(x_k)\|^2]$ is attained. That is,
\begin{align*}
\E[\|c(x_{k_*})\|^2] = \min_{0\leq k \leq K-1} \E[\|c(x_k)\|^2]. \end{align*}
For any given $\tilde \alpha > 0$, $h \geq 0$, and $0 < \tilde \eta < \frac{1}{L + \tilde \alpha \|A\|^2}$, let $\tilde{x}$ be an approximate minimizer of $\min_{x\in \X} \L(x,\lambda_{k_*},\tilde \alpha)$ obtained with the starting point $x_{k_*}$ that satisfies either of the following two statements:
\begin{enumerate}[(i)]
    \item $\tilde{x}$ satisfies $$\E_{k_*}\left[\L(\tilde{x}, \lambda_{k_*};\tilde \alpha) - \L(x^*_{k_*},\lambda_{k_*};\tilde \alpha)\right] \leq \frac{\tilde \alpha h}{2K}\,; \text{ or }$$
    \label{item:postopI}
    \item \Cref{ass:StrcvxALwrtx} also holds and $\tilde{x}$ satisfies $$\E_{k_*}\left[\| R(\tilde{x},\lambda_{k_*};\tilde \alpha,\tilde \eta) \|^2\right] \leq \frac{\mu^2 h}{4\|A\|^2K}\,.$$     \label{item:postopII}
\end{enumerate} 
Then, for $\tilde{\lambda} = \lambda_{k_*} - \tilde \alpha c(\tilde{x})$, we have
\begin{subequations}
\label{eq:postop}
\begin{align}
\label{eq:postop_a}
\E[\|c(\tilde{x})\|^2] & \leq \frac{2h}{K} + \frac{8(1+\theta_e^2)}{\alpha (1 - \theta_e^2) K}  [q_{\alpha}(\lambda_0) - q^*] + \frac{8}{1-\theta_e^2} \frac{\tau_0a_\infty}{K}
 \end{align}
 and
 \begin{align}
\label{eq:postop_b}
    \E\left[\left\|\frac{\proj_{\X} (\tilde{x} - \rb{\tilde \eta} \nabla \ell_x(\tilde{x},\tilde{\lambda})) - \tilde{x}}{\rb{\tilde\eta}}\right\|^2\right] \leq \frac{b}{K},
\end{align}
\end{subequations}
where $b = \rb{\frac{\tilde \alpha h}{\tilde \eta}}$ if \ref{item:postopI} holds and $b =\frac{\mu^2 h}{4\|A\|^2}$ if \ref{item:postopII} holds. \end{corollary}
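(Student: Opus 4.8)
The plan is to treat the extra update at iterate $x_{k_*}$ as a single step of the exact-equivalent augmented Lagrangian method with penalty $\tilde\alpha$, and to bound its two errors separately using the machinery already built up. First I would establish \cref{eq:postop_a}. The key observation is that, by the triangle-type inequality $\|a+b\|^2 \le 2\|a\|^2 + 2\|b\|^2$, we have $\E[\|c(\tilde x)\|^2] \le 2\E[\|c(\tilde x) - c(x^*_{k_*})\|^2] + 2\E[\|c(x^*_{k_*})\|^2]$, where $x^*_{k_*}$ is an exact minimizer of $\min_{x\in\X}\L(x,\lambda_{k_*};\tilde\alpha)$. The second term is handled by noting that $x^*_{k_*}$ is the exact primal update from $\lambda_{k_*}$, so $\|c(x^*_{k_*})\|^2 = \|\nabla q_{\tilde\alpha}(\lambda_{k_*})\|^2$; but more directly, I would instead bound $\E[\|c(x^*_{k_*})\|^2]$ against the exact feasibility error at $x_{k_*}$ itself — actually, the cleanest route is: observe $\E[\|c(x^*_{k_*})\|^2] \le \E[\|c(x_{k_*})\|^2]$ is \emph{not} automatic, so instead use that $x^*_{k_*}$ minimizes $\L(\cdot,\lambda_{k_*};\tilde\alpha)$ over $\X$, hence the exact dual step from $\lambda_{k_*}$ with penalty $\tilde\alpha$ satisfies the descent bound, but this reintroduces $\tilde\alpha$-dependence. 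The pragmatic fix the authors surely intend: bound $\E[\|c(x^*_{k_*})\|^2]$ using \cref{eq:FeasToFunc} with $x = x_{k_*}$ (which gives $\|c(x_{k_*}) - c(x^*_{k_*})\|^2 \le \frac{2}{\tilde\alpha}(\L(x_{k_*},\lambda_{k_*};\tilde\alpha) - \L(x^*_{k_*},\lambda_{k_*};\tilde\alpha))$ — not obviously small) — so I instead go the other way and write $\E[\|c(x^*_{k_*})\|^2] \le 2\E[\|c(x^*_{k_*}) - c(x_{k_*})\|^2] + 2\E[\|c(x_{k_*})\|^2]$, and then I need a bound on $\E[\|c(x^*_{k_*}) - c(x_{k_*})\|^2]$, i.e., the feasibility-error gap between the \emph{original} iterate $x_{k_*}$ (produced with penalty $\alpha$) and the minimizer of the \emph{new} subproblem with penalty $\tilde\alpha$; this is the step that requires care, and I will return to it below. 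Granting such a bound, combined with \cref{eq:mincxksublconvresult} applied at $k = k_*$, gives the first two terms $\frac{8(1+\theta_e^2)}{\alpha(1-\theta_e^2)K}[q_\alpha(\lambda_0)-q^*] + \frac{8}{1-\theta_e^2}\frac{\tau_0 a_\infty}{K}$; and the remaining $\frac{2h}{K}$ comes from $2\E[\|c(\tilde x) - c(x^*_{k_*})\|^2] \le \frac{4}{\tilde\alpha}\E_{k_*}[\L(\tilde x,\lambda_{k_*};\tilde\alpha) - \L(x^*_{k_*},\lambda_{k_*};\tilde\alpha)] \le \frac{2h}{K}$ under \ref{item:postopI} (via \cref{eq:FeasToFunc}), or directly from \cref{eq:FeasToGrad} under \ref{item:postopII}.

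Next I would establish the stationarity bound \cref{eq:postop_b}. Exactly as in the proof of \Cref{thm:Sublinearconv_norm} (the computation preceding \cref{eq:optToAuglag}), one shows that, since $\tilde\lambda = \lambda_{k_*} - \tilde\alpha c(\tilde x)$,
\begin{equation*}
\tilde x - \tilde\eta\nabla\ell_x(\tilde x,\tilde\lambda) = \tilde x - \tilde\eta\nabla_x\L(\tilde x,\lambda_{k_*};\tilde\alpha),
\end{equation*}
so that
\begin{equation*}
\left\|\frac{\proj_{\X}(\tilde x - \tilde\eta\nabla\ell_x(\tilde x,\tilde\lambda)) - \tilde x}{\tilde\eta}\right\|^2 = \left\|R(\tilde x,\lambda_{k_*};\tilde\alpha,\tilde\eta)\right\|^2.
\end{equation*}
Under \ref{item:postopI}, \cref{eq:AugLipIneq} (applied with penalty $\tilde\alpha$ and the admissible step $\tilde\eta < \frac{1}{L+\tilde\alpha\|A\|^2}$) gives $\E[\|R(\tilde x,\lambda_{k_*};\tilde\alpha,\tilde\eta)\|^2] \le \frac{2}{\tilde\eta}\E_{k_*}[\L(\tilde x,\lambda_{k_*};\tilde\alpha)-\L(x^*_{k_*},\lambda_{k_*};\tilde\alpha)] \le \frac{\tilde\alpha h}{\tilde\eta K}$, which is $b/K$ with $b = \tilde\alpha h/\tilde\eta$. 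Under \ref{item:postopII}, the assumed bound is literally $\E_{k_*}[\|R(\tilde x,\lambda_{k_*};\tilde\alpha,\tilde\eta)\|^2] \le \frac{\mu^2 h}{4\|A\|^2 K}$, so taking full expectation gives $b/K$ with $b = \frac{\mu^2 h}{4\|A\|^2}$ directly. Throughout, I would invoke the tower property \cref{eq:ExpectationIdentity} to pass from conditional expectations $\E_{k_*}[\cdot]$ to full expectations $\E[\cdot]$, noting $K$ and hence $k_*$ are deterministic once $K$ is fixed.

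The main obstacle is the mixed-penalty feasibility gap $\E[\|c(x^*_{k_*}) - c(x_{k_*})\|^2]$ appearing in the proof of \cref{eq:postop_a}: $x_{k_*}$ was generated by the main loop with penalty $\alpha$, but $x^*_{k_*}$ minimizes the \emph{post-processing} subproblem with penalty $\tilde\alpha$, so neither \Cref{cond:tolr-I} nor \cref{eq:FeasToFunc} applies verbatim to control their difference. I expect the resolution is either (a) to avoid introducing $x^*_{k_*}$ for the new penalty at all, and instead decompose $\E[\|c(\tilde x)\|^2] \le 2\E[\|c(\tilde x) - c(x_{k_*})\|^2] + 2\E[\|c(x_{k_*})\|^2]$ — but then $c(\tilde x) - c(x_{k_*})$ is the \emph{progress} of the post-op step, which one bounds using that $\tilde x$ approximately minimizes $\L(\cdot,\lambda_{k_*};\tilde\alpha)$ while $x_{k_*}$ is a feasible competitor, giving $\L(\tilde x,\lambda_{k_*};\tilde\alpha) \le \L(x_{k_*},\lambda_{k_*};\tilde\alpha) + (\text{small})$ in expectation, hence $\frac{\tilde\alpha}{2}\|c(\tilde x)\|^2 \le \frac{\tilde\alpha}{2}\|c(x_{k_*})\|^2 + \langle\text{linear terms}\rangle + (\text{small})$ — or (b) a direct argument: since $\tilde x$ nearly minimizes $\L(\cdot,\lambda_{k_*};\tilde\alpha)$ and the optimal value of this subproblem is at most $\L(x_{k_*},\lambda_{k_*};\tilde\alpha)$, one has in expectation $\L(\tilde x,\lambda_{k_*};\tilde\alpha) - \L(x^*_{k_*},\lambda_{k_*};\tilde\alpha) + \L(x^*_{k_*},\lambda_{k_*};\tilde\alpha) \le \L(x_{k_*},\lambda_{k_*};\tilde\alpha) + \frac{\tilde\alpha h}{2K}$, and then \cref{eq:FeasToFunc} with $x = x_{k_*}$ converts the gap $\L(x_{k_*},\lambda_{k_*};\tilde\alpha) - \L(x^*_{k_*},\lambda_{k_*};\tilde\alpha)$ into $\frac{\tilde\alpha}{2}\E[\|c(x_{k_*}) - c(x^*_{k_*})\|^2]$, closing a circular-looking inequality that can be rearranged. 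I would carry out option (b), being careful that the cross term $\langle\lambda_{k_*}, c(\tilde x) - c(x_{k_*})\rangle$ either cancels or is absorbed, and that every conditional expectation is taken with $k_*$ fixed so the tower property applies cleanly; the constants $2h$, $8$, $8$ in \cref{eq:postop_a} should then fall out of two applications of $\|a+b\|^2 \le 2\|a\|^2+2\|b\|^2$ together with \cref{eq:mincxksublconvresult}.
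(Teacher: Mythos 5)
Your treatment of the stationarity bound \cref{eq:postop_b} is correct and coincides exactly with the paper's: the identity $\tilde{x} - \tilde\eta\nabla\ell_x(\tilde{x},\tilde{\lambda}) = \tilde{x} - \tilde\eta\nabla_x\L(\tilde{x},\lambda_{k_*};\tilde\alpha)$, then \cref{eq:AugLipIneq} with penalty $\tilde\alpha$ under \ref{item:postopI}, or the hypothesis verbatim under \ref{item:postopII}. Where you diverge is the feasibility bound \cref{eq:postop_a}, and there the paper does something far more direct than either of your options (a) or (b): it applies \cref{eq:cond2Tocond1} (resp.\ \cref{eq:cond3Tocond1}) with $\tilde\alpha$ as the penalty parameter to conclude $\E_{k_*}[\|c(\tilde{x}) - c(x_{k_*})\|^2]\le h/K$, and then uses the single decomposition $\E[\|c(\tilde{x})\|^2]\le 2\E[\|c(\tilde{x})-c(x_{k_*})\|^2]+2\E[\|c(x_{k_*})\|^2]$ together with \cref{eq:mincxksublconvresult} applied at $k=k_*$. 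No minimizer of the new subproblem ever enters the feasibility decomposition; the anchor point is the iterate $x_{k_*}$ throughout, which is how the constants $2h$, $8$, $8$ fall out in two lines.

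That said, the ``main obstacle'' you describe is pointing at a genuine imprecision: what \cref{eq:FeasToFunc} with penalty $\tilde\alpha$ actually yields under \ref{item:postopI} is $\E_{k_*}[\|c(\tilde{x})-c(x^*_{k_*})\|^2]\le h/K$ with $x^*_{k_*}$ the exact minimizer of the \emph{new} subproblem, whereas the paper's display, and its subsequent appeal to \cref{eq:mincxksublconvresult}, are written with the iterate $x_{k_*}$; the proof silently identifies the two. Your instinct to flag this is sound, but your proposal does not resolve it, and as submitted it does not prove \cref{eq:postop_a}: option (b)'s ``circular inequality'' leaves the term $\L(x_{k_*},\lambda_{k_*};\tilde\alpha)-\L(x^*_{k_*},\lambda_{k_*};\tilde\alpha)$ --- the suboptimality of the \emph{starting point} with respect to the \emph{new} penalty --- on the wrong side of the inequality, and nothing in hypotheses \ref{item:postopI} or \ref{item:postopII} controls it, so no rearrangement makes it cancel; option (a) likewise terminates in an uncontrolled cross term. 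The concrete gap, then, is that you never actually obtain a bound of the form $\E[\|c(\tilde{x})-c(x_{k_*})\|^2]\le h/K$ (or any substitute that recovers the stated constants), which is the one inequality the whole feasibility estimate hinges on; to match the paper you would simply assert it from \cref{eq:cond2Tocond1}/\cref{eq:cond3Tocond1} with penalty $\tilde\alpha$, accepting the identification of $x_{k_*}$ with $x^*_{k_*}$ that the paper makes at that step.
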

\begin{proof}
If either \ref{item:postopI} or \ref{item:postopII} is satisfied, \rb{then using $\tilde \alpha$ as the penalty parameter in  \cref{eq:cond2Tocond1} and \cref{eq:cond3Tocond1}}, it follows that
\begin{align*}
    \E\rb{_{k_*}}[\|c(\tilde{x}) - c(x_{k_*})\|^2] \leq \frac{h}{K}.
\end{align*}
Therefore, \rb{ taking the full expectation, }
\begin{align*}
 \E[\|c(\tilde{x})\|^2] &= \E[\|c(\tilde{x}) - c(x_{k_*}) + c(x_{k_*})\|^2] \\
 &\leq 2\E[\|c(\tilde{x}) - c(x_{k_*})\|^2] + 2\E[\|c(x_{k_*})\|^2] \\
 & \leq \frac{2h}{K} + \frac{8(1+\theta_e^2)}{\alpha (1 - \theta_e^2) K}  [q_{\alpha}(\lambda_0) - q^*] + \frac{8}{1-\theta_e^2} \rb{\frac{\tau_0a_\infty}{K}},
\end{align*}
\rb{where the first inequality is due to $\|a+b\|^2 \leq 2 \|a\|^2 + 2 \|b\|^2$ for any $a,b \in \R^n$, and the last inequality follows from \cref{eq:mincxksublconvresult}}. Now, consider the {stationarity error}. Similar to \cref{eq:optToAuglag}, we can show that
\begin{align}\label{eq:postoptToAuglag}
 \E\left[\left\|\frac{\proj_{\X} (\tilde{x} - \rb{\tilde \eta} \nabla \ell_x(\tilde{x},\tilde{\lambda})) - \tilde{x}}{\rb{\tilde \eta}}\right\|^2\right] = \E\left[\| R(\tilde{x},\lambda_{k_*};\rb{\tilde \alpha},\rb{\tilde \eta}) \|^2\right]
 \,.
\end{align}
Therefore, if \ref{item:postopI} holds, it follows from~\cref{eq:AugLipIneq} \rb{with $\tilde \alpha$} as the penalty parameter that
\begin{align*}
\E\left[\| R(\tilde{x},\lambda_{k_*};\rb{\tilde \alpha},\rb{\tilde \eta}) \|^2\right] &\leq \frac{2}{\rb{\tilde \eta}}\left(\E\left[\L(\tilde{x}, \lambda_{k_*};\rb{\tilde \alpha}) - \L(x^*_{k_*},\lambda_{k_*};\rb{\tilde \alpha}\right] \right)
\leq \frac{\rb{\tilde \alpha} h}{\rb{\tilde \eta} K}
\,.
\end{align*}
Likewise, if \ref{item:postopII} holds, then
$$\E\left[\| R(\tilde{x},\lambda_{k_*};\rb{\tilde \alpha},\rb{\tilde \eta}) \|^2\right] \leq \frac{\mu^2 h}{4\|A\|^2K}.$$ 
Substituting these inequalities into \cref{eq:postoptToAuglag} completes the proof. 
\end{proof}

\subsection{Sample Complexity}
We now establish the
sample complexity for our inexact augmented Lagrangian algorithm, i.e., we estimate the worst-case expected total number of stochastic gradient evaluations to reach an $\epsilon-$accurate solution.
To define accuracy, we specifically consider the following metric:
\begin{align}\label{eq:epsiloncompcond}
\max\left\{\E[\|c(\tilde{x})\|^2],  \E\left[\left\|\frac{\proj_{\X} (\tilde{x} - \rb{\tilde \eta} \nabla \ell_x(\tilde{x},\tilde{\lambda})) - \tilde{x}}{\rb{\tilde \eta}}\right\|^2\right]\right\} \leq \epsilon,
\end{align}
for some $\epsilon \in (0,1)$.
For the sake of brevity in this analysis, we employ \Cref{cond:tolr-II} as the inexactness condition with $\theta_e = 0$. At any outer iteration $k$, $x_{k-1}$ is used as the starting point in the adaptive sampling proximal gradient method to solve the inner subproblem  \cref{eq:augLagProb} until \Cref{cond:tolr-II} is satisfied.
Recall that we define the index for the inner iterations as $t$, and the iterates in the inner loop as $x_{k,t}$.
Since $x_{k-1}$ is used as the starting iterate, we set $x_{k,0} \defeq x_{k-1}$.

The adaptive sampling projected gradient method used to solve the inner subproblems (see \Cref{subsec: adasample}) converges at a sublinear rate (cf.~\cite[Theorem 2.11]{beiser2020adaptive},\cite[Theorem 3.7]{xie2020constrained}). The following theorem reformulates this result for augmented Lagrangian subproblems~\cref{eq:augLagProb}.
\begin{theorem}\label{thm:cvxsublresult}
Suppose \Cref{ass:Contdiff,ass:StrcvxALwrtx} hold. If $\eta = \frac{(1 - 2\theta_g^2)}{L + \alpha \|A\|^2}$ with $\theta_g \in [0,\frac{1}{\sqrt{2}})$ and \Cref{con:Normcondition1proj} is satisfied, then for any outer iteration $k \in \N$ and inner iteration $t \in \rb{\N_+}$, it holds that
\begin{align}\label{eq:SubLinRateAugLag}
\E_k [\L(x_{k,t},\lambda_k;\alpha) - \L(x_k^*,\lambda_k;\alpha)] \leq  \frac{(L + \alpha \|A\|^2)\min_{x^*_k \in \X^*_k}\| x_{k-1} - x^*_k \|^2}{2(1 - \rb{2\theta_g^2})t}
\,,
\end{align}
where $\X^*_k = \arg\min_{x \in \X} \L(x,\lambda_k;\alpha)$. 
\end{theorem}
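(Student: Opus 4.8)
The plan is to reduce \Cref{thm:cvxsublresult} to the known sublinear convergence result for the adaptive sampling projected gradient method on a strongly convex objective (cited here as \cite[Theorem 2.11]{beiser2020adaptive} or \cite[Theorem 3.7]{xie2020constrained}), applied to the specific objective $\L(\cdot,\lambda_k;\alpha)$ at a fixed outer iteration $k$. First I would fix $k$, treat $\lambda_k$ as given (so that all expectations become the conditional expectations $\E_k$), and record the three structural facts about the inner subproblem \cref{eq:augLagProb} that the cited theorem requires: (i) by \Cref{ass:StrcvxALwrtx} the function $x \mapsto \L(x,\lambda_k;\alpha)$ is $\mu$-strongly convex on $\X$; (ii) by \cref{eq:LagLipschitz} its gradient is Lipschitz on $\X$ with constant $L_\L \defeq L + \alpha\|A\|^2$; and (iii) by \Cref{con:Normcondition1proj}/\cref{eq:Normcondition1proj} the adaptive sampling norm test is enforced at every inner iteration $t$, with the same parameter $\theta_g$. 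The step size $\eta = (1-2\theta_g^2)/(L+\alpha\|A\|^2) = (1-2\theta_g^2)/L_\L$ is exactly the admissible step size in the cited result once $\theta_g \in [0,1/\sqrt 2)$ guarantees $\eta > 0$ and $\eta < 1/L_\L$.

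Next I would invoke the cited sublinear rate verbatim for this objective. Those theorems give, for the iterates $x_{k,t}$ generated from the starting point $x_{k,0}$, a bound of the form $\E_k[\L(x_{k,t},\lambda_k;\alpha) - \L^*_k] \le C \,\mathrm{dist}(x_{k,0},\X^*_k)^2 / t$ with $C = L_\L/(2(1-2\theta_g^2))$, where $\L^*_k = \min_{x\in\X}\L(x,\lambda_k;\alpha)$ and $\X^*_k = \argmin_{x\in\X}\L(x,\lambda_k;\alpha)$; the distance to the (possibly non-singleton, since only $\mu$-strong convexity of the augmented Lagrangian — not uniqueness — is assumed, though strong convexity does force a unique minimizer, so I would actually just write $\|x_{k,0}-x^*_k\|$ for the unique $x^*_k$, or equivalently $\min_{x^*_k\in\X^*_k}\|x_{k,0}-x^*_k\|$ to match the statement) appears because the constant in the descent argument is governed by the initialization error. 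Finally, since \cref{alg:StochALwInexact} sets $x_{k,0} \gets x_{k-1}$ (line 2), I would substitute $x_{k,0} = x_{k-1}$ to obtain precisely \cref{eq:SubLinRateAugLag}.

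The only genuine content beyond bookkeeping is checking that the hypotheses of the cited theorems are met by $\L(\cdot,\lambda_k;\alpha)$; in particular, the cited results are stated for a generic strongly convex, $L$-smooth stochastic objective whose stochasticity enters only through an unbiased gradient estimator satisfying the norm test, and one must confirm that the gradient estimator $\nabla_x\L_{S_{k,t}}$ from \cref{eq:gradLagsk} is indeed unbiased for $\nabla_x\L(\cdot,\lambda_k;\alpha)$ (it is, since the penalty and linear-in-$\lambda$ terms are deterministic and the $F(\cdot,\zeta)$ samples are i.i.d.) and that \Cref{con:Normcondition1proj} is exactly the norm test in those papers written for this $\L$ — which was already established in the passage around \cref{eq:norm_ideal_auglag}–\cref{eq:Normcondition1proj}. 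I expect the main (minor) obstacle to be reconciling notation and constants: matching the step-size parametrization, confirming that the cited theorem's "distance to optimal set" term specializes to the $\|x_{k-1}-x^*_k\|^2$ appearing here, and making sure the conditional expectation $\E_k$ is the right object (the inner-loop randomness $S_{k,0},\dots,S_{k,t-1}$ is averaged out, while $\lambda_k$ and $x_{k-1}=x_{k,0}$ are held fixed by the filtration $\mathbb{T}_k$). No new estimates are needed; the proof is essentially a citation with a substitution of the starting point.
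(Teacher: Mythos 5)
Your proposal is correct and matches the paper's own treatment: the paper gives no standalone proof of \Cref{thm:cvxsublresult}, presenting it explicitly as a reformulation of \cite[Theorem 2.11]{beiser2020adaptive} and \cite[Theorem 3.7]{xie2020constrained} for the subproblem objective $\L(\cdot,\lambda_k;\alpha)$, which is exactly your citation-plus-hypothesis-check-plus-substitution of $x_{k,0}=x_{k-1}$. Your side remark about the tension between the stated \Cref{ass:StrcvxALwrtx} and the set-valued $\min_{x^*_k\in\X^*_k}$ is well observed (the sublinear $1/t$ rate and the non-singleton $\X^*_k$ indicate the merely convex regime of the cited results, consistent with how the theorem is later used in \Cref{thm:TotalWorkComplexity}), but it does not affect the validity of your argument.
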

Recall that $\E_k$ denotes expectation conditioned on the filtration $\mathbb{T}_k$, and note that the initial distance to optimality is in this filtration, i.e., $\min_{x^*_k \in \X^*_k}\| x_{k-1} - x^*_k \|^2 \in \mathbb{T}_k$.
Adaptive sampling methods are more efficient and robust in practice than methods that increase the sample sizes at predetermined rates. However, their sample complexity analysis has proven to be difficult, and establishing an upper bound on the sample sizes at each iteration poses significant challenges. Therefore, we make the following assumption based on the sample size growth rate over inner iterations $t$.
\begin{assumption} \label{ass:arithgrowth}
At any given outer iteration $k \in \N$, the expected sample size required to satisfy \Cref{con:Normcondition1proj} increases at \rb{a polynomial rate} over the inner iterations $t$. More specifically, there exists \rb{$ c_0 \geq 0$} and \rb{$\delta_0 > 0$ arbitrarily close to zero}, such that
\begin{equation}\label{eq:arithgrowth}
\E_{k}[|S_{k,t}|] \rb{=} \frac{c_0\omega}{\theta^2_g\min_{x^*_k \in \X^*_k}\|x_{k-1} - x^*_k\|^2} \rb{(t+1)^{1+\delta_0}} \quad \forall k, t \in \N,
\end{equation}
where $\X^*_k = \arg\min_{x \in \X}\L(x,\lambda_k;\alpha)$ and $\omega$ is defined in \cref{eq:bndvar}. 
\end{assumption}
\rb{Predetermined sample growth rates similar to \Cref{ass:arithgrowth} are employed in unconstrained and constrained stochastic optimization settings \cite{pasupathy2018sampling,berahas2022adaptive}.}
We acknowledge that~\Cref{ass:arithgrowth} pertains to algorithmic quantities and is, therefore, less than ideal.
Nevertheless, while we cannot rigorously prove this statement, we provide the following set of supporting (heuristic) arguments.

Consider rewriting \Cref{con:Normcondition1proj} in the following way:
\begin{align}\label{eq:batchsize_def}
b_{k,t} \defeq \frac{\E_{\zeta}[\|\nabla f(x_{k,t},\zeta) - \nabla F(x_{k,t})\|^2]}{ \theta_g^2 \| \E_{k,t}[R_{S_{k,t}}(x_{k,t},\lambda_{k};\alpha,\eta)] \|^2} \leq |S_{k,t}|
\,.
\end{align}
\rb{This inequality is tight, i.e., $|S_{k,t}| = b_{k,t}$ when \Cref{con:Normcondition1proj} is satisfied with equality. In this case,}
due to \cref{eq:bndvar}, it follows that
\begin{align}\label{eq:boundonbkt}
    b_{k,t} \leq \frac{\omega}{\theta_g^2 \| \E_{k,t}[R_{S_{k,t}}(x_{k,t},\lambda_{k};\alpha,\eta)]\|^2}.
\end{align}
On the other hand, using \cref{eq:SubLinRateAugLag} and taking the expected value of both sides of \cref{eq:AugLipIneq} yields
\begin{align}\label{eq:sublinearR}
\E_k [\| R(x_{k,t},\lambda_{k};\alpha,\eta)\|^2] \leq \frac{(L + \alpha \|A\|^2)\min_{x^*_k \in \X^*_k}\| x_{k-1} - x^*_k \|^2}{\eta(1 - \rb{2\theta_g^2})t}, \rb{\quad \forall t\in \N_+.}
\end{align}
This inequality implies that the expected squared norm of the reduced gradient goes to zero at a sublinear rate.

Now, recall~\cref{eq:Equivalence}. In particular,
\begin{align*}
\|\E_{k,t}&[R_{S_{k,t}}(x_{k,t},\lambda_{k};\alpha,\eta)]\|^2 \leq \frac{1}{(1 - \theta_g)^2}\|R(x_{k,t},\lambda_k;\alpha,\eta)]\|^2
\,.
\end{align*}
Taking the conditional expectation $\E_k$ of both sides and invoking \cref{eq:sublinearR}, it follows that
\begin{align*}
\E_k[\|\E_{k,t}&[R_{S_{k,t}}(x_{k,t},\lambda_{k};\alpha,\eta)]\|^2] \leq \frac{(L + \alpha \|A\|^2)\min_{x^*_k \in \X^*_k}\| x_{k-1} - x^*_k \|^2}{(1 - \theta_g)^2\,\eta(1 - \rb{2\theta_g^2})\,t} , \rb{\quad \forall t\in \N_+.}
\end{align*}
This inequality implies that the expected squared norm of the stochastic reduced gradient goes to zero at a sublinear rate. \rb{Therefore, it is possible to replace the right-hand-side of \Cref{con:Normcondition1proj} with a sublinearly convergent sequence ($(t+1)^{-(1 + \delta_0)}$) for any $t \in \N$ and achieve a similar sublinear convergence result as in \Cref{thm:cvxsublresult}. In such a scenario, the sample sizes satisfy \Cref{ass:arithgrowth}. For the sake of brevity, in the rest of our analysis, we assume $\delta_0 = 0$ since $\delta_0$ is arbitrarily close to zero.}
We now state an equation that is useful to bound finite sum expressions in the complexity analysis. For any $\delta > 0$ and $K \in \N_+$, we have
\begin{equation}\label{eq:Boundonsumofterms}
\sum_{k = 0}^{K} k^{1 + \delta} < \int_{t=0}^{K+1} t^{1 + \delta} dt = \frac{(K+1)^{2 + \delta}}{2 + \delta}.
\end{equation}
We are now ready to prove the main theorem about outer iteration and sample complexity. 
\begin{theorem}
\label{thm:TotalWorkComplexity}
Suppose \Cref{ass:Contdiff,ass:Existenceofsecondorderpoint,ass:arithgrowth} hold \rb{with $\delta_0 = 0$}. For any $x_{-1}, \lambda_0$ and $\alpha > 0$, let $\{(x_k, \lambda_k)\}$ be the sequence of primal-dual iterates generated by \cref{alg:StochALwInexact} where $x_k$ satisfies \Cref{cond:tolr-II} at each outer iteration $k$ with $\theta_e = 0$, $\tau_k = \frac{\tau_0}{(k+1)^{1+\delta/2}}$, \rb{$a_\infty = \sum_{k=0}^{\infty} \frac{1}{(k+1)^{1+\delta/2}}$ }, with $\tau_0 \geq 0$, and $\delta > 0$. Suppose the sample sizes $|S_{k,t}|$ satisfy \Cref{con:Normcondition1proj} with $\theta_g \in [0, \frac{1}{\sqrt{2}})$, $\eta = \frac{(1 - 2\theta_g^2)}{L + \alpha \|A\|^2}$. Under the conditions of \Cref{corr:postop} with $\tilde{x}$ satisfying $(i)$ with $h \geq 0$,  \rb{$\tilde \eta = \frac{(1 - 2\theta_g^2)}{L + \tilde \alpha \|A\|^2}$ and $\tilde \alpha > 0$}, the number of outer iterations to get an $\epsilon-$accurate solution $(\tilde{x}, \tilde{\lambda})$ satisfying \cref{eq:epsiloncompcond} is
\begin{equation}\label{eq:kbound}
K_\epsilon = \left\lceil \frac{1}{\epsilon}\max\left\{ \frac{8}{\alpha } [q_{\alpha}(\lambda_0) - q^*] 
+ 2(4\tau_0 a_{\infty} + h), \frac{\tilde \alpha h}{\tilde \eta}\right\}\right \rceil.
\end{equation}
\rb{Moreover, if the gradients of the Lagrangian function are bounded, i.e., $\\ \|\nabla \L(x_{k-1},\lambda_k;\alpha)\|^2 \leq D_{\L}$ for all $k \in \N$, } then the expected number of stochastic gradient evaluations is 
\begin{equation}\label{eq:workbnd}
\E[\mathcal{W}] \leq \frac{B(K_\epsilon+1)^{3 + \delta}}{\alpha^2\tau_0^2 (3 + \delta)}  + \frac{\tilde{B} K_\epsilon^2}{\rb{\tilde\alpha^2 h^2}}, \end{equation}
where 
\begin{equation*}
B = \frac{2c_0\omega} {\theta_g^2}\left(\frac{(L + \alpha \|A\|^2)^2D\ck{^2}}{(1 - 2\theta_g^2)^2} + 4D_{\L}\right),
\end{equation*}
and
\begin{equation*}
\tilde{B} = \frac{2c_0\omega} {\theta_g^2}\left(\frac{(L + \tilde{\alpha} \|A\|^2)^2D\ck{^2}}{(1 - 2\theta_g^2)^2} + 4D_{\L}\right).
\end{equation*}
\end{theorem}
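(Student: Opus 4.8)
The plan is to split the argument into two independent parts: (1) counting the outer iterations $K_\epsilon$ needed to reach the accuracy \cref{eq:epsiloncompcond}, and (2) bounding the cumulative expected number of stochastic gradient evaluations $\E[\mathcal{W}]$ over those $K_\epsilon$ outer iterations plus the final post-optimization step. For part (1), I would apply \Cref{corr:postop}\ref{item:postopI} directly. Since $\theta_e = 0$, the bound \cref{eq:postop_a} reduces to $\E[\|c(\tilde x)\|^2] \le \frac{2h}{K} + \frac{8}{\alpha K}[q_\alpha(\lambda_0) - q^*] + \frac{8\tau_0 a_\infty}{K}$, and \cref{eq:postop_b} gives $\E[\|\cdots\|^2] \le \frac{\tilde\alpha h}{\tilde\eta K}$. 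Requiring both right-hand sides to be $\le \epsilon$ and solving for $K$ yields exactly the $\max\{\cdot,\cdot\}$ expression in \cref{eq:kbound}; the ceiling makes $K_\epsilon$ an integer. This part is essentially bookkeeping.

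For part (2), I would first bound the expected work at a single outer iteration $k < K_\epsilon$. By \Cref{ass:arithgrowth} (with $\delta_0=0$), the expected sample size at inner iteration $t$ is $\E_k[|S_{k,t}|] = \frac{c_0\omega}{\theta_g^2 \min_{x^*_k}\|x_{k-1}-x^*_k\|^2}(t+1)$. The number of inner iterations $T_k$ needed to satisfy \Cref{cond:tolr-II} (with $\theta_e = 0$) is controlled by \Cref{thm:cvxsublresult}: we need $\frac{(L+\alpha\|A\|^2)\min_{x^*_k}\|x_{k-1}-x^*_k\|^2}{2(1-2\theta_g^2)T_k} \le \frac{\alpha\tau_k}{2}$, i.e. $T_k \lesssim \frac{(L+\alpha\|A\|^2)\min_{x^*_k}\|x_{k-1}-x^*_k\|^2}{\alpha(1-2\theta_g^2)\tau_k}$. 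Summing $\E_k[|S_{k,t}|]$ over $t=0,\dots,T_k$ and using $\sum_{t=0}^{T}(t+1) = O(T^2)$ gives expected work at iteration $k$ of order $\frac{c_0\omega}{\theta_g^2\min_{x^*_k}\|x_{k-1}-x^*_k\|^2}\cdot T_k^2 \lesssim \frac{c_0\omega(L+\alpha\|A\|^2)^2\min_{x^*_k}\|x_{k-1}-x^*_k\|^2}{\theta_g^2\alpha^2(1-2\theta_g^2)^2\tau_k^2}$; the factor $\min_{x^*_k}\|x_{k-1}-x^*_k\|^2$ cancels, which is the crucial point. I would then bound $\min_{x^*_k}\|x_{k-1}-x^*_k\|^2 \le D^2$ by compactness \cref{eq:compactD}. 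The $D_{\L}$ term appears because one must also account for the contribution of the first inner iteration (the gradient step at $x_{k,0}=x_{k-1}$ already requires a sample, and its size is bounded via $\|\nabla\L(x_{k-1},\lambda_k;\alpha)\|^2 \le D_{\L}$ feeding into \cref{eq:boundonbkt} with the reduced gradient there) — this is where the bounded-gradient hypothesis enters and where I expect the accounting to be most delicate. Substituting $\tau_k = \tau_0(k+1)^{-(1+\delta/2)}$ gives $\tau_k^{-2} = \tau_0^{-2}(k+1)^{2+\delta}$, and summing over $k=0,\dots,K_\epsilon$ via \cref{eq:Boundonsumofterms} produces $\frac{B(K_\epsilon+1)^{3+\delta}}{\alpha^2\tau_0^2(3+\delta)}$. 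Finally, the post-optimization step with penalty $\tilde\alpha$ requires, by \Cref{thm:cvxsublresult} applied with $\tilde\alpha$, about $\tilde T \lesssim \frac{(L+\tilde\alpha\|A\|^2)D^2 K_\epsilon}{\tilde\alpha h}$ inner iterations and a parallel sample-size sum, contributing $\frac{\tilde B K_\epsilon^2}{\tilde\alpha^2 h^2}$; adding the two pieces yields \cref{eq:workbnd}.

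The main obstacle I anticipate is making the single-outer-iteration work bound rigorous: one must pass from the \emph{deterministic} inner-loop rate of \Cref{thm:cvxsublresult} (which bounds $\E_k$ of the suboptimality as a function of $t$) to a bound on the \emph{random} stopping time $T_k$, then multiply by the growing, also random, sample sizes $|S_{k,t}|$ and take expectations correctly with respect to the nested filtration — the factor $\min_{x^*_k}\|x_{k-1}-x^*_k\|^2$ must cancel exactly between numerator and denominator, and one has to be careful that this quantity is $\mathbb{T}_k$-measurable (as the remark after \Cref{thm:cvxsublresult} notes) so it can be pulled out of $\E_k$. I would handle the randomness of $T_k$ by working with the expected number of inner iterations, defining $T_k$ as the first $t$ for which the deterministic certificate in \Cref{thm:cvxsublresult} forces \Cref{cond:tolr-II}, so that $T_k$ is in fact deterministic given $\mathbb{T}_k$, and only the sample sizes remain random; then $\E_k[\mathcal{W}_k] = \sum_{t=0}^{T_k}\E_k[|S_{k,t}|]$ is a finite deterministic sum of the $\E_k[|S_{k,t}|]$ given by \Cref{ass:arithgrowth}. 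The remaining steps — geometric/polynomial sum estimates and collecting constants into $B$ and $\tilde B$ — are routine.
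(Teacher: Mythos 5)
Your overall architecture matches the paper's proof: part (1) is indeed just bookkeeping from \Cref{corr:postop} with $\theta_e=0$, and for part (2) the paper likewise bounds the random stopping time $T_k$ by the deterministic quantity $\Omega_k = \lceil (L+\alpha\|A\|^2)\min_{x^*_k}\|x_{k-1}-x^*_k\|^2 / ((1-2\theta_g^2)\alpha\tau_k)\rceil$ obtained from \Cref{thm:cvxsublresult}, sums $\E_k[|S_{k,t}|]$ over $t$, exploits the cancellation of $\min_{x^*_k}\|x_{k-1}-x^*_k\|^2$ in the leading term, bounds the surviving factor by $D^2$ via compactness, and sums over $k$ with \cref{eq:Boundonsumofterms}. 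Your handling of the filtration and of the measurability of $\min_{x^*_k}\|x_{k-1}-x^*_k\|^2$ is also consistent with what the paper does.

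The one step that would fail as you describe it is the role of $D_{\L}$. You propose to bound the sample size at $t=0$ by feeding $\|\nabla\L(x_{k-1},\lambda_k;\alpha)\|^2\le D_{\L}$ into \cref{eq:boundonbkt}. But \cref{eq:boundonbkt} has the squared norm of the (expected stochastic) reduced gradient in the \emph{denominator}, so an \emph{upper} bound on the gradient only makes that bound larger; to control $b_{k,t}$ you would need a \emph{lower} bound on the reduced gradient, which $D_{\L}$ does not supply. The actual difficulty is that, under \Cref{ass:arithgrowth}, $\E_k[|S_{k,t}|]$ carries the factor $\min_{x^*_k}\|x_{k-1}-x^*_k\|^{-2}$, and after squaring $\Omega_k\le(\text{main term})+1$ the constant ``$+1$'' leaves behind a term proportional to $\min_{x^*_k}\|x_{k-1}-x^*_k\|^{-2}$ that is unbounded if $x_{k-1}$ happens to be very close to $\X^*_k$. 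The paper resolves this by a \emph{lower} bound on the distance: since $T_k>0$, \Cref{cond:tolr-II} with $\theta_e=0$ is violated at $x_{k,0}=x_{k-1}$, so $\frac{\alpha\tau_k}{2} < \L(x_{k-1},\lambda_k;\alpha)-\L(x^*_k,\lambda_k;\alpha) \le \|\nabla\L(x_{k-1},\lambda_k;\alpha)\|\,\|x_{k-1}-x^*_k\| \le \sqrt{D_{\L}}\,\|x_{k-1}-x^*_k\|$ by convexity, whence $\min_{x^*_k}\|x_{k-1}-x^*_k\|^2 > \alpha^2\tau_k^2/(4D_{\L})$. This is where the bounded-gradient hypothesis actually enters (as an upper bound on the gradient converting a function-value gap into a distance lower bound), and it is what produces the $4D_{\L}$ contribution inside $B$ and $\tilde B$. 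With this correction, the rest of your plan goes through exactly as in the paper.
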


\begin{proof}
\rb{Substituting $\theta_e = 0$ into \cref{eq:postop}, we obtain
\begin{align*}
    \E[\|c(\tilde{x})\|^2] &\leq  \frac{8}{\alpha K}  [q_{\alpha}(\lambda_0) - q^*] + \frac{2(4\tau_0 a_\infty + h)}{K}
    \,,
 \end{align*}}
 and
 \begin{align*}
    \E\left[\left\|\frac{\proj_{\X} (\tilde{x} - \rb{\tilde \eta} \nabla \ell_x(\tilde{x},\tilde{\lambda})) - \tilde{x}}{\rb{\tilde\eta}}\right\|^2\right] \leq \rb{\frac{\tilde \alpha h}{\tilde \eta K}}
    \,.
\end{align*}
Therefore, for any $K \geq K_\epsilon$  defined in \cref{eq:kbound}, $(\tilde{x},\tilde{\lambda})$ satisfies \cref{eq:epsiloncompcond}. 

Let $T_k$ be the first inner iteration at which \Cref{cond:tolr-II} is satisfied with $\theta_e = 0$.
If $T_k = 0$, then we would have a sufficiently accurate starting point for the algorithm to terminate before the first complete iteration.
Therefore, without loss of generality, we assume that $T_k > 0$.
By \Cref{thm:cvxsublresult}, the inner subproblem termination condition,
\rb{\Cref{cond:tolr-II} with $\theta_e = 0$}, is satisfied at a given inner iteration $t \in \N_+$ if 
\begin{equation*}\frac{(L + \alpha \|A\|^2)\min_{x^*_k \in \X^*_k}\| x_{k-1} - x^*_k \|^2}{2(1 - \rb{2\theta_g^2})t} \leq \frac{\alpha \tau_k}{2}.
\end{equation*}
Thus, we have a deterministic upper bound $\Omega_k > 0$ on the random variable $T_k$; namely,
\begin{equation}\label{eq:stoppingtime}
T_k \leq \ck{\Omega_k \defeq} \left \lceil\frac{(L + \alpha \|A\|^2)}{(1-\rb{2\theta_g^2}) \alpha\tau_k} \min_{x^*_k \in \X^*_k}\| x_{k-1} - x^*_k \|^2 \right \rceil.
\end{equation}
We now analyze the total number of expected stochastic gradient evaluations. First, consider the expected sample complexity at each outer iteration $k$:
\begin{align}
\E_k [\mathcal{W}_k] = \E_k\left[\sum_{t=0}^{\rb{T_k -1}}|S_{k,t}|\right] &\ck{\leq} \frac{c_0 \omega}{\theta^2_g\min_{x^*_k \in \X^*_k}\|x_{k-1} - x^*\|^2}\sum_{t = 0}^{\rb{\ck{\Omega_k - 1}}} \rb{(t+1)} \nonumber \\
&\leq \frac{c_0 \omega \Omega_k^2}{\theta^2_g\min_{x^*_k \in \X^*_k}\|x_{k-1} - x^*\|^2}\,, \label{eq:compperouteriter}
\end{align}
where the first \ck{inequality} is due to \Cref{ass:arithgrowth} \rb{with $\delta_0 = 0$}. Substituting \cref{eq:stoppingtime} into \cref{eq:compperouteriter}, \rb{ using $\lceil x \rceil \leq x + 1$ and $\|a +b\|^2 \leq 2\|a\|^2 + 2\|b\|^2$ for any $a,b\in \R^n$}, we have that
\rb{
\begin{align}\label{eq:workcompperiter}
   \E_k [\mathcal{W}_k] \leq \frac{2c_0\omega(L + \alpha \|A\|^2)^2\min_{x^*_k \in \X^*_k}\|x_{k-1} - x^*_k\|^2}{\theta^2_g(1 - \rb{2\theta_g^2})^2\alpha^2\tau_k^2} + \frac{2c_0 \omega}{\theta^2_g\min_{x^*_k \in \X^*_k}\|x_{k-1} - x^*\|^2} . 
\end{align}}
\rb{Now, $T_k > 0$ implies that \Cref{cond:tolr-II} is violated at $x_{k,0} = x_{k-1}$. That is,  
\begin{align*}
\E_k\left[\L(x_{k-1}, \lambda_k;\alpha) - \L(x^*_k,\lambda_k;\alpha)\right] >  \frac{\alpha\tau_k}{2}
\,.
\end{align*}
Recalling that $x_{k-1}$ is in the filtration $\mathbb{T}_k$, and using convexity of $\L$, it follows that
\begin{align*}
   \frac{\alpha\tau_k}{2} &<  \L(x_{k-1}, \lambda_k;\alpha) - \L(x^*_k,\lambda_k;\alpha) \\
   & \leq \|\nabla \L(x_{k-1},\lambda_k;\alpha)\|\|x_{k-1} - x^*_k\| \\
   & \leq \sqrt{D_{\L}}\|x_{k-1} - x^*_k\|    \,,
\end{align*}
for all $x^*_k \in \X^*_k$.
Therefore, 
\begin{align}\label{eq:minxbnd}
    \min_{x^*_k \in \X^*_k}\|x_{k-1} - x^*_k\|^2 > \frac{\alpha^2 \tau_k^2}{4D_{\L}}
\end{align}
}\rb{Now, summing the inequality \cref{eq:workcompperiter} from $k= 0$ to \rb{$K-1$}, taking full expectation, using \cref{eq:compactD,eq:minxbnd}, $\tau_k = \tau_0 \rb{(k+1)}^{-1-\delta/2}$, and \cref{eq:Boundonsumofterms}}, it follows that
\begin{align}
\E \left[ \sum^{\rb{K-1}}_{k = 0} \mathcal{W}_k\right] &\leq  \frac{2c_0\omega(L + \alpha \|A\|^2)^2}{\theta_g^2(1 - \rb{2\theta_g^2})^2\alpha^2} \sum_{k=0}^{\rb{K-1}}\frac{\E[\min_{x^*_k \in \X^*_k}\|x_{k-1} - x^*_k\|^2]}{\tau_k^2} + \frac{8c_0\omega D_{\L}}{\theta_g^2\alpha^2}\sum_{k=0}^{K-1}\frac{1}{\tau_k^2}\nonumber \\
&\leq \frac{B} {\alpha^2\tau_0^2}\sum_{k=0}^{\rb{K-1}}  \rb{(k+1)}^{2+\delta} \nonumber \\
&\leq \frac{B} {\alpha^2\tau_0^2(3 + \delta)}(K+1)^{3 + \delta}\,.
\end{align}
We now consider the total number of stochastic gradients evaluated in the final step described in \Cref{corr:postop} with $\tilde{x}$ satisfying \ref{item:postopI}. \rb{Following a similar approach to the derivation of \cref{eq:workcompperiter,eq:minxbnd}, and using \cref{eq:compactD}, we have that} \begin{align}
 \E [\tilde{W}]  &\leq \frac{2c_0\omega(L + \rb{\tilde \alpha} \|A\|^2)^2 K^2 D\ck{^2}}{\theta_g^2(1 - \rb{2\theta_g^2})^2\rb{\tilde\alpha^2\tilde h^2}} + \frac{8c_0\omega K^2 D_{\L}}{\theta_g^2\rb{\tilde\alpha^2\tilde h^2}}
 =\frac{\tilde{B} K^2}{\rb{\tilde\alpha^2 h^2}}
  \,.
\end{align}
Finally, we can define the expected total number of gradient evaluations as
\begin{align}
\E [\mathcal{W}] &= \E \left[ \sum^{\rb{K-1}}_{k = 0} \mathcal{W}_k\right] + \E[\mathcal{\tilde{W}}] \label{eq:WorkComplexityStep1}\\
&\leq \frac{B(K+1)^{3 + \delta}}{\alpha^2\tau_0^2 (3 + \delta)}  + \frac{\tilde{B} K^2}{\rb{\tilde\alpha^2 h^2}}\label{eq:WorkComplexityStep2}
\,.
\end{align}
Substituting $K = K_{\epsilon}$ into \cref{eq:WorkComplexityStep2} completes the proof. 
\end{proof}

\rb{
\begin{remark}
In \Cref{thm:TotalWorkComplexity}, we state an additional assumption related to the boundedness of the gradients of the augmented Lagrangian functions at the iterates computed by the algorithm. We note that this is a mild assumption and can be proven using \Cref{ass:Contdiff,ass:Existenceofsecondorderpoint}, and if the dual variables $\lambda_k$ are bounded. Due to the convergence results established in \Cref{subsec:convergence}, it is reasonable to assume that the dual variables are bounded. 
\end{remark}
}

\subsection[Sample Complexity II]{Sample Complexity: $\alpha = \mathcal{O}(\epsilon^{-1})$}

\Cref{thm:TotalWorkComplexity} establishes total outer iteration complexity, $K_\epsilon$, and expected sample complexity, $\E[ \mathcal{W}]$, for any choice of the penalty parameter $\alpha$. If $\alpha$ and the other parameters (e.g., $\tau_0, h$) given in \Cref{thm:TotalWorkComplexity} are chosen to be independent of the accuracy $\epsilon$, then $K_\epsilon = \mathcal{O}(\epsilon^{-1})$ and $\E[ \mathcal{W}] = \mathcal{O}(\epsilon^{-3-\delta})$. However, this sample complexity bound is not tight as the optimal sample complexity for stochastic convex programs is  $\mathcal{O}(\epsilon^{-2})$ \cite{xu2020primal,lan2020algorithms}. The next corollary establishes that this optimal sample complexity can be achieved when $\alpha = \mathcal{O}(\epsilon^{-1})$.   

\begin{corollary}\label{corr:optimalworkcomp}
    Under the conditions of \Cref{thm:TotalWorkComplexity}, if $\alpha = c_{\alpha} \epsilon^{-1}$, $\tau_0 = c_{\tau}\epsilon$, and $h = c_{h}\epsilon$ for some $c_\alpha, c_{\tau}, c_h \in (0,\infty)$. Then $
        K_{\epsilon} =  \mathcal{O}(1)
    $
    and 
    \begin{align}
        \E[ \mathcal{W}] = \mathcal{O}(\epsilon^{-2}).
    \end{align}
\end{corollary}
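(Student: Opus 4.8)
The plan is to substitute the prescribed parameter choices $\alpha = c_\alpha \epsilon^{-1}$, $\tau_0 = c_\tau \epsilon$, and $h = c_h \epsilon$ directly into the two conclusions of \Cref{thm:TotalWorkComplexity}, namely the outer-iteration count \cref{eq:kbound} and the work bound \cref{eq:workbnd}, and then track the $\epsilon$-dependence of each term. The key preliminary observation is that the quantity $q_\alpha(\lambda_0) - q^*$ appearing in \cref{eq:kbound} is controlled uniformly in $\alpha$: since $q_\alpha(\lambda_0) \le q(\lambda_0)$ (the Moreau envelope lies below $q$) and $q^* = \min_\lambda q_\alpha(\lambda) = \min_\lambda q(\lambda)$ by \cref{eq:dualMoreaequivalence}, we have $0 \le q_\alpha(\lambda_0) - q^* \le q(\lambda_0) - q^*$, a constant independent of $\epsilon$. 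Hence the term $\frac{8}{\alpha}[q_\alpha(\lambda_0) - q^*]$ in \cref{eq:kbound} is $\mathcal{O}(\epsilon)$, the term $2(4\tau_0 a_\infty + h) = 2(4 c_\tau \epsilon\, a_\infty + c_h \epsilon)$ is $\mathcal{O}(\epsilon)$, and $\frac{\tilde\alpha h}{\tilde\eta}$ is $\mathcal{O}(\epsilon)$ provided $\tilde\alpha$ is chosen $\mathcal{O}(1)$ (or, consistently with the corollary's framing, one may take $\tilde\alpha$ comparable to $\alpha$, in which case $\tilde\eta = (1-2\theta_g^2)/(L+\tilde\alpha\|A\|^2)$ behaves like $\tilde\alpha^{-1}$ and $\tilde\alpha h/\tilde\eta \sim \tilde\alpha^2 h = \mathcal{O}(\epsilon^{-1})$ — so the clean statement $K_\epsilon = \mathcal{O}(1)$ requires $\tilde\alpha = \mathcal{O}(1)$, which I would state explicitly). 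With all three arguments of the $\max$ in \cref{eq:kbound} being $\mathcal{O}(\epsilon)$, the factor $\epsilon^{-1}$ in front cancels them, and the ceiling gives $K_\epsilon = \mathcal{O}(1)$.

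Next I would insert $K_\epsilon = \mathcal{O}(1)$ and the parameter scalings into the work bound \cref{eq:workbnd}. The first term is $\frac{B(K_\epsilon+1)^{3+\delta}}{\alpha^2 \tau_0^2(3+\delta)}$; here $(K_\epsilon+1)^{3+\delta} = \mathcal{O}(1)$, but $B$ itself depends on $\alpha$ through $(L+\alpha\|A\|^2)^2 = \mathcal{O}(\alpha^2) = \mathcal{O}(\epsilon^{-2})$, so $B = \mathcal{O}(\epsilon^{-2})$, while $\alpha^2\tau_0^2 = c_\alpha^2 c_\tau^2 = \mathcal{O}(1)$ (the $\epsilon^{-2}$ from $\alpha^2$ cancels the $\epsilon^2$ from $\tau_0^2$). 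Thus the first term is $\mathcal{O}(\epsilon^{-2})$. For the second term $\frac{\tilde B K_\epsilon^2}{\tilde\alpha^2 h^2}$: with $\tilde\alpha = \mathcal{O}(1)$ we have $\tilde B = \mathcal{O}(1)$, $K_\epsilon^2 = \mathcal{O}(1)$, and $h^2 = c_h^2 \epsilon^2$, giving $\mathcal{O}(\epsilon^{-2})$; if instead $\tilde\alpha \sim \alpha$, then $\tilde B \sim \tilde\alpha^2 = \mathcal{O}(\epsilon^{-2})$ and $\tilde\alpha^2 h^2 \sim \epsilon^{-2}\epsilon^2 = \mathcal{O}(1)$, again giving $\mathcal{O}(\epsilon^{-2})$. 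Either way, summing the two contributions yields $\E[\mathcal{W}] = \mathcal{O}(\epsilon^{-2})$.

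The main obstacle — really the only subtlety — is bookkeeping the $\alpha$-dependence hidden inside the constants $B$ and $\tilde B$ (and inside $\tilde\eta$): a careless reading would treat $B$ as a fixed constant and conclude $\mathcal{O}(\epsilon^{-3-\delta})$ collapses to something smaller automatically, whereas the actual cancellation is the delicate balance $\alpha^2 \cdot \tau_0^2 \sim \epsilon^{-2}\cdot\epsilon^2 = 1$ together with $(L+\alpha\|A\|^2)^2 \sim \alpha^2 \sim \epsilon^{-2}$ surviving in the numerator. I would therefore write out $B$ and $\tilde B$ explicitly with the substitutions, isolate the powers of $\epsilon$, and verify that the $\delta$-dependence disappears entirely (since $(K_\epsilon+1)^{3+\delta}$ is bounded). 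A secondary point worth a sentence is confirming that the hypotheses of \Cref{thm:TotalWorkComplexity} and \Cref{corr:postop}\ref{item:postopI} remain satisfiable for these $\epsilon$-dependent parameters — they are, since those results hold for \emph{any} $\alpha > 0$, $\tau_0 \ge 0$, $h \ge 0$, and the step sizes $\eta, \tilde\eta$ are defined in terms of $\alpha, \tilde\alpha$ accordingly.
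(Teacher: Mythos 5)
Your proposal is correct and follows essentially the same route as the paper's proof: direct substitution of $\alpha = c_\alpha\epsilon^{-1}$, $\tau_0 = c_\tau\epsilon$, $h = c_h\epsilon$ into \cref{eq:kbound} and \cref{eq:workbnd}, with the decisive step being the cancellation $\alpha^2\tau_0^2 = c_\alpha^2 c_\tau^2$ against the surviving $(L+\alpha\|A\|^2)^2 = \mathcal{O}(\epsilon^{-2})$ inside $B$. Your two added clarifications --- that $0 \le q_\alpha(\lambda_0) - q^* \le q(\lambda_0) - q^*$ uniformly in $\alpha$, and that $K_\epsilon = \mathcal{O}(1)$ implicitly requires $\tilde\alpha = \mathcal{O}(1)$ --- are points the paper leaves tacit, and are worth stating.
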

\begin{proof}
    Substituting $\alpha, \tau_0,$ and $h$ values into \cref{eq:kbound}, it follows that
    \begin{align}
        K_{\epsilon} &\leq 1 + \frac{1}{\epsilon}\max\left\{ \frac{8\epsilon}{c_{\alpha}} [q_{\alpha}(\lambda_0) - q_{\alpha}(\lambda^*)] 
+ 2\epsilon(4c_{\tau}a_{\infty} + c_h), \frac{\tilde \alpha c_h\epsilon}{\tilde \eta}\right\}  \nonumber \\
& = 1 + \max\left\{ \frac{8}{c_{\alpha}} [q_{\alpha}(\lambda_0) - q_{\alpha}(\lambda^*)] 
+ 2(4c_{\tau}a_{\infty} + c_h), \frac{\tilde \alpha c_h}{\tilde \eta}\right\}  \nonumber \\
& = \mathcal{O}(1)\,. \label{eq:itercompo1}
\end{align}
We now analyze the sample complexity.  Using $\alpha, \tau_0$, and $h$ values, and $\epsilon < 1$, it follows that 
\begin{align}
    \frac{B(K_\epsilon+1)^{3 + \delta}}{{\alpha^2\tau_0^2 (3 + \delta)}} &= \frac{2c_0\omega} {\theta_g^2c_{\alpha}^2c_{\tau}^2 \ck{(3 + \delta)}}\left(\frac{(L + c_{\alpha}\epsilon^{-1} \|A\|^2)^2D\ck{^2}}{(1 - 2\theta_g^2)^2} + 4D_{\L}\right)(K_\epsilon+1)^{3 + \delta} \nonumber \\
    &\leq \frac{2c_0\omega} {\epsilon^2\theta_g^2c_{\alpha}^2c_{\tau}^2 \ck{(3 + \delta)}}\left(\frac{(L + \ck{c_{\alpha}} \|A\|^2)^2D\ck{^2}}{(1 - 2\theta_g^2)^2} + 4D_{\L}\right)(K_\epsilon+1)^{3 + \delta} \label{eq:comp_1},\end{align}
and 
\begin{align}\label{eq:comp_2}
\frac{\tilde{B} K_\epsilon^2}{\rb{\tilde\alpha^2 h^2}} &= \frac{2c_0\omega} {\epsilon^2\theta_g^2\rb{\tilde\alpha^2 c_h^2}}\left(\frac{(L + \tilde{\alpha} \|A\|^2)^2D\ck{^2}}{(1 - 2\theta_g^2)^2} + 4D_{\L}\right)K_\epsilon^2. \end{align}
Substituting \cref{eq:comp_1,eq:comp_2} in \cref{eq:workbnd}, we have that,
\begin{align*}
\E[\mathcal{W}] &\leq \frac{2c_0\omega} {\epsilon^2\theta_g^2c_{\alpha}^2c_{\tau}^2\ck{(3 + \delta)}}\left(\frac{(L + \ck{c_\alpha} \|A\|^2)^2D\ck{^2}}{(1 - 2\theta_g^2)^2} + 4D_{\L}\right)(K_\epsilon+1)^{3 + \delta} \\
&~~~~~~ + \frac{2c_0\omega} {\epsilon^2\theta_g^2\rb{\tilde\alpha^2 c_h^2}}\left(\frac{(L + \tilde{\alpha} \|A\|^2)^2D\ck{^2}}{(1 - 2\theta_g^2)^2} + 4D_{\L}\right)K_\epsilon^2 \\
&= \mathcal{O}(\epsilon^{-2}),
\end{align*}
where the last equality is due to the fact that all other constants in the inequality are independent of the choice of $\epsilon$.~
\end{proof}

\begin{remark}
We observe that the complexity results given in \Cref{thm:TotalWorkComplexity} and \Cref{corr:optimalworkcomp} do not exploit the benefits of using the previous iterate $x_{k-1} = x_{k,0}$ as the starting point for solving the current subproblem. That is, the bound  on $\E\left[\min_{x^*_k \in \X^*_k}\|x_{k-1} - x^*_k\|^2\right]$ is not tight. The difficulty in exploiting the benefits of this procedure is due to the fact that the augmented Lagrangian functions are only convex but not necessarily strongly convex. In \Cref{subsec:special}, we consider strongly convex functions and establish the advantages of this procedure. 
\end{remark}

\subsection[Sample Complexity III]{Sample Complexity: $\X = \R^n$}\label{subsec:special}
We provide improved convergence and complexity results when $\X = \R^n$ and the objective function $f$ is $\mu$-strongly convex. \begin{assumption}
\label{ass:Strcvxf}
The objective function $f$ is $\mu$-strongly convex. That is,
\begin{align*}
\nabla^2 f(x)  \succeq \mu I \quad \forall x \in \R^n
\,
\end{align*}
where $I \in \R^{n \times n}$ is an identity matrix.  
\end{assumption}
We should note that \Cref{ass:Strcvxf} implies \Cref{ass:StrcvxALwrtx}. In this case, the inner subproblems are unconstrained and have unique optimal solutions. Moreover, the optimality conditions given in \cref{eq:firstStatPoint} can be written as
\begin{align*}
\nabla \ell_x(x,\lambda) = 0 \quad \mbox{and} \quad c(x) = 0. 
\end{align*}
It \rb{can} also \rb{be shown} that the negative dual function $q(\lambda)$ is strongly convex in this setting, as stated in the following proposition \rb{(cf.~\cite[Propositions 3.1 and 3.3]{guigues2020strong} and the references therein,\cite[Theorem 1]{zhou2018fenchel}, \cite[Proposition 2.5]{guigues2021inexact}}).\begin{proposition}\label{prop:strnconv}
    If \Cref{ass:Contdiff,ass:Strcvxf} hold \rb{with $\X=\R^n$}, then $q(\lambda)$ defined in \cref{eq:dualFunction} is strongly convex with the strong convexity parameter \rb{$\mu_q = \frac{\sigma}{\mu + L}$} where $\sigma = \lambda_{\min}(AA^T)$. \end{proposition}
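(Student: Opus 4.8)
The plan is to establish strong convexity of $q$ by bounding, from below, the difference quotient of its gradient. Recall from \cref{eq:dualFunction} that $q(\lambda) = -\min_{x \in \R^n} \ell(x,\lambda)$ and that, under \Cref{ass:Strcvxf} with $\X = \R^n$, the inner minimization $x(\lambda) \defeq \argmin_{x} \ell(x,\lambda)$ has a \emph{unique} solution characterized by the stationarity condition $\nabla_x \ell(x(\lambda),\lambda) = \nabla f(x(\lambda)) - A^T\lambda = 0$. By Danskin's theorem, $q$ is differentiable with $\nabla q(\lambda) = c(x(\lambda)) = A x(\lambda) - b$. Thus, for any $\lambda, \mu \in \R^m$,
\begin{equation*}
\langle \nabla q(\lambda) - \nabla q(\mu), \lambda - \mu \rangle = \langle A(x(\lambda) - x(\mu)), \lambda - \mu \rangle = \langle x(\lambda) - x(\mu), A^T(\lambda - \mu) \rangle.
\end{equation*}

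First I would use the stationarity condition to rewrite $A^T(\lambda - \mu) = \nabla f(x(\lambda)) - \nabla f(x(\mu))$, so that the inner product above becomes $\langle x(\lambda) - x(\mu), \nabla f(x(\lambda)) - \nabla f(x(\mu)) \rangle$. Then I would invoke the standard co-coercivity/strong-monotonicity estimate for gradients of functions that are both $\mu$-strongly convex and $L$-smooth (see, e.g., \cite[Theorem 2.1.12]{nesterov2018lectures}): for all $u, v$,
\begin{equation*}
\langle \nabla f(u) - \nabla f(v), u - v \rangle \geq \frac{\mu L}{\mu + L}\|u - v\|^2 + \frac{1}{\mu + L}\|\nabla f(u) - \nabla f(v)\|^2.
\end{equation*}
Dropping the first (nonnegative) term and substituting $u = x(\lambda)$, $v = x(\mu)$, together with $\nabla f(x(\lambda)) - \nabla f(x(\mu)) = A^T(\lambda - \mu)$, yields
\begin{equation*}
\langle \nabla q(\lambda) - \nabla q(\mu), \lambda - \mu \rangle \geq \frac{1}{\mu + L}\|A^T(\lambda - \mu)\|^2 \geq \frac{\lambda_{\min}(AA^T)}{\mu + L}\|\lambda - \mu\|^2,
\end{equation*}
where the last step uses $\|A^T w\|^2 = \langle AA^T w, w\rangle \geq \lambda_{\min}(AA^T)\|w\|^2$. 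This is precisely the strong monotonicity of $\nabla q$ with parameter $\mu_q = \sigma/(\mu + L)$, which is equivalent to $\mu_q$-strong convexity of $q$.

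The main obstacle — and the only place requiring care — is justifying that $q$ is differentiable with $\nabla q(\lambda) = c(x(\lambda))$ and that $x(\cdot)$ is well-defined (single-valued). Both follow from $\mu$-strong convexity of $\ell(\cdot,\lambda)$ on $\R^n$: uniqueness of the minimizer is immediate, and differentiability of $q$ with the stated gradient formula is Danskin's theorem (or a direct argument via the envelope theorem, using that $\ell$ is jointly smooth and the minimizer is unique). A secondary subtlety is that $\sigma = \lambda_{\min}(AA^T)$ could be zero if $A$ does not have full row rank; the proposition implicitly assumes $\sigma > 0$ (equivalently, $A$ surjective), and I would state this explicitly. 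No compactness of $\X$ is needed here since $\X = \R^n$ and strong convexity alone guarantees the inner problem attains its minimum. I would also remark that this recovers, via \Cref{lemma:MoreauProperties}(iv), the strong convexity of the Moreau envelope $q_\alpha$ with parameter $\mu_\alpha = \mu_q/(\mu_q \alpha + 1)$, which is what the subsequent linear-convergence analysis in \Cref{subsec:special} will exploit.
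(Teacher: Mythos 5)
Your proposal is correct and follows essentially the same route as the paper's proof in \Cref{sec:appendix_proof}: differentiability of $q$ with $\nabla q(\lambda) = Ax(\lambda)-b$, the stationarity identity $A^T\lambda = \nabla f(x(\lambda))$, the co-coercivity estimate for $\mu$-strongly convex, $L$-smooth functions, and the bound $\|A^Tw\|^2 \geq \lambda_{\min}(AA^T)\|w\|^2$. The only differences are cosmetic (Danskin vs.\ the cited Hiriart-Urruty corollary, and your reuse of $\mu$ as a dual variable, which clashes with the strong convexity parameter and should be renamed).
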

\rb{For the sake of completeness, we include the proof of this proposition in \Cref{sec:appendix_proof}.}
We also state the following well-known result for strongly convex functions with Lipschitz continuous gradients (cf.~\cite[Theorem 2.1.5 and Theorem 2.1.10]{nesterov2018lectures})
\begin{proposition}\label{prop:inequalities}
    If the function $q_\alpha(\lambda)$ is strongly convex with parameter $\mu_{\alpha}$ and has a Lipschitz continuous gradient with Lipschitz constant $L_{\alpha}$, then for any  $\lambda \in \R^m$, it holds that
    \begin{align}\label{eq:wellknown}
       2 \mu_{\alpha}(q_{\alpha}(\lambda) - q_{\alpha}(\lambda^*)) \leq \|\nabla q_{\alpha}(\lambda)\|^2 \leq 2L_{\alpha}(q_{\alpha}(\lambda) - q_{\alpha}(\lambda^*)),
    \end{align}
    \ck{where $\lambda^* = \argmin_{\lambda} q_\alpha(\lambda)$.}
\end{proposition}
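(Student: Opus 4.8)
\textbf{Proof proposal for \cref{prop:inequalities}.}
The plan is to prove the two inequalities in \cref{eq:wellknown} separately, each as a consequence of exactly one of the two hypotheses (Lipschitz gradient for the upper bound, strong convexity for the lower bound), combined with the first-order optimality condition at the minimizer. Two preliminary observations suffice: since $q_\alpha$ is strongly convex it is coercive, so $\lambda^\ast = \argmin_\lambda q_\alpha(\lambda)$ exists and is unique; and since $q_\alpha$ is differentiable (its gradient is Lipschitz by hypothesis), $\nabla q_\alpha(\lambda^\ast) = 0$. Everything else reduces to the standard quadratic upper and lower models of $q_\alpha$.

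For the upper bound, I would apply the descent lemma to the $L_\alpha$-smooth function $q_\alpha$: for every $v \in \R^m$,
\begin{equation*}
q_\alpha(v) \le q_\alpha(\lambda) + \langle \nabla q_\alpha(\lambda), v - \lambda\rangle + \frac{L_\alpha}{2}\|v - \lambda\|^2.
\end{equation*}
Choosing $v = \lambda - L_\alpha^{-1}\nabla q_\alpha(\lambda)$ minimizes the right-hand side and gives $q_\alpha(v) \le q_\alpha(\lambda) - \tfrac{1}{2L_\alpha}\|\nabla q_\alpha(\lambda)\|^2$. Since $q_\alpha(\lambda^\ast) \le q_\alpha(v)$, rearranging yields $\|\nabla q_\alpha(\lambda)\|^2 \le 2L_\alpha\bigl(q_\alpha(\lambda) - q_\alpha(\lambda^\ast)\bigr)$.

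For the lower bound, I would use the $\mu_\alpha$-strong convexity inequality: for every $v \in \R^m$,
\begin{equation*}
q_\alpha(v) \ge q_\alpha(\lambda) + \langle \nabla q_\alpha(\lambda), v - \lambda\rangle + \frac{\mu_\alpha}{2}\|v - \lambda\|^2.
\end{equation*}
Minimizing the right-hand side over $v$ (the minimizer being $v = \lambda - \mu_\alpha^{-1}\nabla q_\alpha(\lambda)$) and bounding the left-hand side below by $\min_v q_\alpha(v) = q_\alpha(\lambda^\ast)$ gives $q_\alpha(\lambda^\ast) \ge q_\alpha(\lambda) - \tfrac{1}{2\mu_\alpha}\|\nabla q_\alpha(\lambda)\|^2$, which rearranges to $2\mu_\alpha\bigl(q_\alpha(\lambda) - q_\alpha(\lambda^\ast)\bigr) \le \|\nabla q_\alpha(\lambda)\|^2$.

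The main ``obstacle'' is essentially that there is none: this is a textbook fact (see \cite[Theorems~2.1.5 and~2.1.10]{nesterov2018lectures}), and one could simply cite it. The only points that deserve a line of justification are the existence and uniqueness of $\lambda^\ast$ (from coercivity, hence from strong convexity) and the identity $\nabla q_\alpha(\lambda^\ast) = 0$, which is what lets us drop the gradient term at the optimum in the smoothness estimate; both follow immediately from the stated hypotheses.
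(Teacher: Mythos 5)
Your proof is correct, and it is exactly the standard argument behind the result the paper invokes: the paper gives no proof of \cref{prop:inequalities}, simply citing \cite[Theorems~2.1.5 and~2.1.10]{nesterov2018lectures}, and your two displays are precisely the textbook derivations of those bounds (minimize the quadratic upper model for the $L_\alpha$ inequality, minimize the quadratic lower model for the $\mu_\alpha$/Polyak--{\L}ojasiewicz inequality). The only cosmetic remark is that the identity $\nabla q_\alpha(\lambda^\ast)=0$, which you flag as a needed justification, is never actually used in either of your arguments --- you only use $q_\alpha(\lambda^\ast)=\min_v q_\alpha(v)$ --- so it could be dropped.
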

\ck{Note that $L_{\alpha} = \alpha^{-1}$ by \Cref{lemma:MoreauProperties}}. We now establish a linear rate of convergence of both \emph{feasibility error} and \emph{stationarity error}. For the sake of brevity, we only consider \Cref{cond:tolr-III}. 
\begin{theorem}
\label{thm:Linearconv_norm}
Suppose \Cref{ass:Contdiff,ass:Strcvxf} hold and $\X = \R^n$. For any $x_{-1}, \lambda_0$ and $\alpha > 0$, let $\{(x_k, \lambda_k)\}$ be the sequence of primal-dual iterates generated by \cref{alg:StochALwInexact}. If the primal iterates $x_k$ satisfy \Cref{cond:tolr-III} at each iteration $k \in \N$ with $\Tilde \theta_{e} \leq \frac{\mu \theta_e}{2\|A\|}$, $\Tilde \tau_k = \frac{\mu^2\tau_k}{4\|A\|^2}$, $\theta_e \in [0,1)$, and $\tau_k = \tau_0(1/a)^k$ for some $\tau_0 > 0$ and $a > 1$, then
\begin{align} \label{eq:exp_termination}
    \E[\|c(x_k)\|^2] &\leq  A_1\rho^k \quad \mbox{and} \quad 
    \E\left[\left\|\nabla \ell_x(x_k,\lambda_{k+1})) \right\|^2\right] \leq A_2\rho^k,
\end{align}
\ck{
where $A_1 = 4(1 + \theta_e^2)L_{\alpha}A_3 + 2\tau_0$, $A_2 =2L_{\alpha}\Tilde \theta_{e}^2A_3 + \frac{\mu^2\tau_0}{4\|A\|^2}, \\
A_3 = \max\left\{q_\alpha(\lambda_0) - q_\alpha(\lambda^*), \rb{\frac{\tau_0}{ \mu_{\alpha} (1 - \theta_e^2)}}\right\} $, $\rho = \max\left\{1 - \rb{\frac{\alpha \mu_{\alpha} (1 - \theta_e^2) }{2}}, \frac{1}{a}\right\} < 1$, and $\mu_\alpha = \frac{\mu_q}{\mu_q \alpha + 1}$.
}
\begin{proof}
Using \Cref{prop:strnconv} and \Cref{lemma:MoreauProperties}, it follows that the $q_{\alpha}(\lambda)$ is a strongly convex function with strong convexity parameter \rb{$\frac{\mu_q}{\mu_q \alpha + 1}$}. \rb{Therefore,} substituting \rb{\cref{eq:wellknown}} into \cref{eq:grad_error}, 
\ck{using \cref{eq:gradform}, subtracting $q_\alpha(\lambda^*)$ from both sides and taking full expectation we obtain}
\rb{
\begin{align*}
    \E[q_{\alpha}(\lambda_{k+1}) - q_{\alpha}(\lambda^*)] &\leq \left(1 - \alpha\mu_{\alpha}(1 - \theta_e^2)\right)\E[q_{\alpha}(\lambda_k) - q_{\alpha}(\lambda^*)] + \frac{\alpha \tau_k}{2} \\
    &\leq \left(1 - \alpha\mu_{\alpha}(1 - \theta_e^2)\right)\E[q_{\alpha}(\lambda_k) - q_{\alpha}(\lambda^*)] + \frac{\alpha \tau_0}{2a^k},
\end{align*}
where the second inequality is due to $\tau_k = \tau_0(1/a)^k$.
It is now a straightforward exercise in mathematical induction to show that
\begin{equation}\label{eq:linear_func_result}
    \E[q_{\alpha}(\lambda_{k}) - q_{\alpha}(\lambda^*)] \leq A_3 \rho^k \quad \forall k \in \N. 
\end{equation} 
The statement is trivially true for $k=0$. Let's assume it is true for iteration $k$. For iteration $k+1$, it follows that
\begin{align*}\label{eq:linear_func}
    \E[q_{\alpha}(\lambda_{k+1}) - q_{\alpha}(\lambda^*)] &\leq \left(1 - \alpha\mu_{\alpha}(1 - \theta_e^2)\right)\E[q_{\alpha}(\lambda_k) - q_{\alpha}(\lambda^*)] + \frac{\alpha \tau_0}{2a^k} \\
    &\leq A_3\rho^k\left(1 - \alpha\mu_{\alpha}(1 - \theta_e^2) + \frac{\ck{\alpha} \tau_0}{2A_3(a\rho)^k}\right) \\
    &\leq A_3\rho^k\left(1 - \alpha\mu_{\alpha}(1 - \theta_e^2) + \frac{\alpha\mu_{\alpha}(1 - \theta_e^2)}{2}\right) \\
    &\leq A_3\rho^{k+1}
    \,,
\end{align*} }
where the second inequality is due to the statement of the induction, third inequality is due to $\rho \geq 1/a$ and the definition of $A_3$, and the last inequality is due to the definition of $\rho$. Hence, \cref{eq:linear_func_result} is satisfied. Substituting $\lambda = \lambda_{k}$ in \cref{eq:wellknown}, by \cref{eq:gradform} and taking expectation of both sides it follows that, 
\begin{align}
\E[\|c(x_k^*)\|^2] &\leq 2L_{\alpha}\E[q_{\alpha}(\lambda_{k}) - q_{\alpha}(\lambda^*)] \nonumber\\
&\leq 2L_{\alpha}A_3\rho^k. \label{eq:cxresult}
\end{align}
Therefore, using the definitions of $A_1$ and $\rho$, we have that
\begin{align}
\E[\|c(x_k)\|^2] &\leq 2\E[\|c(x_k^*) - c(x_k)\|^2] + 2\E[\|c(x_k^*)\|^2] \nonumber \\
&\leq 2(1 + \theta_e^2)\E[\|c(x_k^*)\|^2] + 2\tau_k \nonumber \\
&\leq \rho^k\left(4(1 + \theta_e^2)L_{\alpha}A_3 + \frac{2\tau_0}{(\rho a)^k}\right) \nonumber\\
&\leq A_1\rho^k. \nonumber 
\end{align}
 Using \cref{eq:cxresult}, \Cref{cond:tolr-III}, and \cref{eq:optToAuglag}, it follows that
 \begin{align*}
    \E\left[\left\|\nabla \ell_x(x_k,\lambda_{k+1})) \right\|^2\right] &\leq \Tilde \theta_{e}^2\E[\|c(x^*_k)\|^2] + \Tilde \tau_k \\
    &\leq 2L_{\alpha}\Tilde \theta_{e}^2A_3\rho^k + \frac{\mu^2\tau_0}{4\|A\|^2a^k} \\
    &\leq A_2\rho^k
    \,,
 \end{align*}
 where the last inequality is due to definitions of $A_2$ and $\rho a \geq 1$. 
\end{proof}
\end{theorem}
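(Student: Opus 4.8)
The plan is to exploit the Moreau-envelope picture of the inexact augmented Lagrangian iteration together with the fact that, under \Cref{ass:Contdiff,ass:Strcvxf} and $\X=\R^n$, the dual problem is strongly convex, so that the ``gradient descent on $q_\alpha$'' recursion contracts linearly. First I would apply \Cref{prop:strnconv} to get that $q$ is strongly convex with parameter $\mu_q=\sigma/(\mu+L)$, and then \Cref{lemma:MoreauProperties}(iv) to transfer this to the envelope $q_\alpha$ with parameter $\mu_\alpha=\mu_q/(\mu_q\alpha+1)$; recall also $L_\alpha=\alpha^{-1}$ from part~(ii). I would then note that the prescribed parameter choices $\tilde\theta_e\leq\mu\theta_e/(2\|A\|)$ and $\tilde\tau_k=\mu^2\tau_k/(4\|A\|^2)$ are exactly what makes \Cref{cond:tolr-III} imply \Cref{cond:tolr-I} (this is \cref{eq:cond3Tocond1}), so the one-step estimate \cref{eq:grad_error} from the proof of \Cref{thm:Sublinearconv_norm} is available.

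Second, I would convert \cref{eq:grad_error} into a linearly contracting scalar recursion. Using $\|c(x_k^*)\|^2=\|\nabla q_\alpha(\lambda_k)\|^2$ (from \cref{eq:gradform}) and the gradient-norm/suboptimality inequalities \cref{eq:wellknown}, in particular $\|\nabla q_\alpha(\lambda)\|^2\geq 2\mu_\alpha(q_\alpha(\lambda)-q_\alpha(\lambda^*))$, then subtracting $q_\alpha(\lambda^*)$, taking full expectation, and inserting $\tau_k=\tau_0 a^{-k}$, I get $\E[q_\alpha(\lambda_{k+1})-q_\alpha(\lambda^*)]\leq(1-\alpha\mu_\alpha(1-\theta_e^2))\,\E[q_\alpha(\lambda_k)-q_\alpha(\lambda^*)]+\tfrac{\alpha\tau_0}{2}a^{-k}$. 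I would then prove by induction that $\E[q_\alpha(\lambda_k)-q_\alpha(\lambda^*)]\leq A_3\rho^k$ with $\rho=\max\{1-\alpha\mu_\alpha(1-\theta_e^2)/2,\ 1/a\}$ and $A_3=\max\{q_\alpha(\lambda_0)-q_\alpha(\lambda^*),\ \tau_0/(\mu_\alpha(1-\theta_e^2))\}$: in the inductive step $(a\rho)^k\geq 1$ bounds the forcing term by $\tfrac{\alpha\tau_0}{2A_3}$, which the lower bound on $A_3$ makes at most $\tfrac{\alpha\mu_\alpha(1-\theta_e^2)}{2}$, leaving a net factor $\leq 1-\alpha\mu_\alpha(1-\theta_e^2)/2\leq\rho$; I would also record $\rho<1$, which holds since $\mu_\alpha<1/\alpha$ forces $\alpha\mu_\alpha<1$. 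Feeding this into the upper bound $\|\nabla q_\alpha(\lambda)\|^2\leq 2L_\alpha(q_\alpha(\lambda)-q_\alpha(\lambda^*))$ gives $\E[\|c(x_k^*)\|^2]\leq 2L_\alpha A_3\rho^k$.

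Third, I would pass from the exact subproblem solution $x_k^*$ to the computed iterate $x_k$. For the feasibility error I split $\E[\|c(x_k)\|^2]\leq 2\E[\|c(x_k^*)-c(x_k)\|^2]+2\E[\|c(x_k^*)\|^2]$ and apply \Cref{cond:tolr-I} to the first term, giving $\E[\|c(x_k)\|^2]\leq 2(1+\theta_e^2)\E[\|c(x_k^*)\|^2]+2\tau_k$; plugging in the two geometric bounds and using $a^{-k}\leq\rho^k$ yields $A_1\rho^k$ with $A_1=4(1+\theta_e^2)L_\alpha A_3+2\tau_0$. For the stationarity error I use that, since $\X=\R^n$, the projected reduced gradient is $R(x_k,\lambda_k;\alpha,\eta)=-\nabla_x\L(x_k,\lambda_k;\alpha)$, and by the computation preceding \cref{eq:optToAuglag} this equals $-\nabla\ell_x(x_k,\lambda_{k+1})$; hence \Cref{cond:tolr-III} gives $\E[\|\nabla\ell_x(x_k,\lambda_{k+1})\|^2]\leq\tilde\theta_e^2\E[\|c(x_k^*)\|^2]+\tilde\tau_k\leq 2L_\alpha\tilde\theta_e^2 A_3\rho^k+\tfrac{\mu^2\tau_0}{4\|A\|^2}a^{-k}\leq A_2\rho^k$.

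I expect the main obstacle to be closing the induction in the second step: it requires coordinating three quantities at once — the contraction factor $1-\alpha\mu_\alpha(1-\theta_e^2)$, the decay rate $1/a$ of the tolerance sequence, and the constant $A_3$ that must be large enough to absorb the forcing term at every step while still producing a clean geometric bound. The choice of $\rho$ as the max of half the contraction gap and $1/a$, together with the lower bound $\tau_0/(\mu_\alpha(1-\theta_e^2))$ on $A_3$, is precisely what makes the recursion collapse; verifying $\rho<1$ and pinning down these constants is the only delicate point, since everything else is a routine assembly of facts already in hand (\Cref{lemma:descentconditionofmoreauinexact}, \cref{eq:grad_error}, \cref{eq:gradform}, \cref{eq:wellknown}, and \Cref{cond:tolr-I,cond:tolr-III}). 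It is worth emphasizing that \Cref{prop:strnconv} — dual strong convexity, which is available only because $\X=\R^n$ and $f$ is strongly convex — is the structural ingredient without which one recovers merely the sublinear rate of \Cref{thm:Sublinearconv_norm}.
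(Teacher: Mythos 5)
Your proposal is correct and follows essentially the same route as the paper's proof: dual strong convexity via \Cref{prop:strnconv} and \Cref{lemma:MoreauProperties}(iv), the contracting recursion obtained by inserting \cref{eq:wellknown} into \cref{eq:grad_error}, the induction with the same $\rho$ and $A_3$, and the identical final assembly of the feasibility and stationarity bounds via \Cref{cond:tolr-I,cond:tolr-III} and \cref{eq:optToAuglag}. Your explicit check that $\alpha\mu_\alpha<1$ guarantees $\rho<1$ is a small but welcome addition that the paper leaves implicit.
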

We now derive the sample complexity results.
We will use the fact that the metric \cref{eq:epsiloncompcond} can be simplified to 
\begin{align}\label{eq:epsiloncompcond_str}
\max\left\{\E[\|c(x_k)\|^2],  \E\left[\left\|\nabla \ell_x(x_k,\lambda_{k+1})\right\|^2\right]\right\} \leq \epsilon \in (0,1),
\end{align}
since $\X = \R^n$. Moreover, the adaptive sampling projected gradient method employed to solve the inner subproblems converges at a linear rate as stated below (cf.~\cite[Theorem 2.10]{beiser2020adaptive},\cite[Theorem 3.7]{xie2020constrained}).
\begin{theorem}\label{thm:projgradstrcnvx_func}
Suppose \Cref{ass:Contdiff,ass:variance,ass:Strcvxf} hold. If $\eta = \frac{(1 - 2\theta_g^2)}{L + \alpha \|A\|^2}$ with $\theta_g \in [0,\frac{1}{2})$, and \Cref{con:Normcondition1proj} is satisfied. Then, for any outer iteration $k \in \N$ and inner iteration $t \in \N$,  it holds that
\begin{align}\label{eq:LinRateAugLag}
\E_k [\L(x_{k,t},\lambda_k;\alpha) - \L(x_k^*,\lambda_k;\alpha)] \leq \rho_{\L}^t ( \L(x_{k,0},\lambda_k;\alpha) - \L(x_k^*,\lambda_k;\alpha)) 
\,,
\end{align}
where $\rho_{\L} = 1 - \frac{(1 - 2\theta_g^2)\mu}{L + \alpha \|A\|^2} \in [0,1)$. 
\end{theorem}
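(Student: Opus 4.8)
The plan is to reduce the claim to the known linear-convergence analysis of the adaptive sampling projected gradient method \cite[Theorem 2.10]{beiser2020adaptive},\cite[Theorem 3.7]{xie2020constrained}, with the augmented Lagrangian $\L(\cdot,\lambda_k;\alpha)$ playing the role of the strongly convex objective on $\X$. First I would record the two structural facts supplied by \Cref{ass:Contdiff,ass:Strcvxf}: for each fixed outer index $k$, the subproblem objective $\L(\cdot,\lambda_k;\alpha)$ is $\mu$-strongly convex, because $\nabla^2_{xx}\L(x,\lambda_k;\alpha) = \nabla^2 f(x) + \alpha A^TA \succeq \mu I$, and its gradient is Lipschitz with constant $L+\alpha\|A\|^2$ by \cref{eq:LagLipschitz}. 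Consequently the subproblem has the unique minimizer $x_k^*$, and the two inequalities \cref{eq:AugLipIneq} and \cref{eq:AugStrnCnvIneq} are available for the step size $\eta = (1-2\theta_g^2)/(L+\alpha\|A\|^2) < 1/(L+\alpha\|A\|^2)$ prescribed in the theorem.

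Next I would carry out the one-step analysis. Writing $g_{k,t} = \nabla_x\L(x_{k,t},\lambda_k;\alpha)$ and $g_S = \nabla_x\L_{S_{k,t}}(x_{k,t},\lambda_k;\alpha)$, and using $x_{k,t+1} = \proj_\X(x_{k,t}-\eta g_S) = x_{k,t} + \eta R_{S_{k,t}}(x_{k,t},\lambda_k;\alpha,\eta)$, I would apply the descent lemma for the $(L+\alpha\|A\|^2)$-smooth function $\L$, add and subtract $g_S$ inside the resulting inner product, and use the variational inequality characterizing the projection to bound the cross term. Taking the conditional expectation $\E_{k,t}$ eliminates the mean-zero part of $g_S - g_{k,t}$, and Bienaym\'e's identity together with \Cref{con:Normcondition1proj} bounds the remaining second-moment term by $\theta_g^2\|\E_{k,t}[R_{S_{k,t}}(x_{k,t},\lambda_k;\alpha,\eta)]\|^2$. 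The choice $\eta = (1-2\theta_g^2)/(L+\alpha\|A\|^2)$ is exactly what is needed to absorb both the smoothness term and the variance term, leaving an inequality of the form $\E_{k,t}[\L(x_{k,t+1},\lambda_k;\alpha) - \L(x_k^*,\lambda_k;\alpha)] \le \L(x_{k,t},\lambda_k;\alpha) - \L(x_k^*,\lambda_k;\alpha) - c\,\|\E_{k,t}[R_{S_{k,t}}]\|^2$ for a positive constant $c$. I would then lower-bound $\|\E_{k,t}[R_{S_{k,t}}]\|^2$ via \cref{eq:Equivalence} so that it is comparable to $\|R(x_{k,t},\lambda_k;\alpha,\eta)\|^2$, and invoke the strong-convexity inequality \cref{eq:AugStrnCnvIneq} to convert that reduced-gradient norm into the function-value gap; this produces the contraction $\E_{k,t}[\L(x_{k,t+1},\lambda_k;\alpha) - \L(x_k^*,\lambda_k;\alpha)] \le \rho_\L\big(\L(x_{k,t},\lambda_k;\alpha) - \L(x_k^*,\lambda_k;\alpha)\big)$ with $\rho_\L = 1 - \frac{(1-2\theta_g^2)\mu}{L+\alpha\|A\|^2}$, which lies in $[0,1)$ because $\theta_g \in [0,\tfrac12)$ and $\mu \le L + \alpha\|A\|^2$.

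Finally I would iterate this bound over the inner counter $t$, using the tower property with $\mathbb{T}_{k,t}\supseteq\mathbb{T}_k$, to obtain $\E_k[\L(x_{k,t},\lambda_k;\alpha) - \L(x_k^*,\lambda_k;\alpha)] \le \rho_\L^t\big(\L(x_{k,0},\lambda_k;\alpha) - \L(x_k^*,\lambda_k;\alpha)\big)$, which is \cref{eq:LinRateAugLag}. The main obstacle I anticipate is the bias of the stochastic reduced gradient: because the projection is nonlinear, $R_{S_{k,t}}$ is not an unbiased estimator of $R(x_{k,t},\lambda_k;\alpha,\eta)$, so the cross-term and variance bookkeeping must be performed through the quantity $\|\E_{k,t}[R_{S_{k,t}}]\|$ that actually appears in \Cref{con:Normcondition1proj} and then transferred back to $\|R\|$ via \cref{eq:Equivalence}. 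Keeping all constants consistent through these substitutions while ensuring $\eta$ exactly cancels the $\theta_g$-dependent terms is the delicate part; however, it is precisely the argument already carried out in \cite{beiser2020adaptive,xie2020constrained} for a generic strongly convex composite objective, which we are invoking here with $\L(\cdot,\lambda_k;\alpha)$ in that role.
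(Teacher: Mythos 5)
The paper offers no proof of this theorem: it is imported, with $\L(\cdot,\lambda_k;\alpha)$ playing the role of the $\mu$-strongly convex, $(L+\alpha\|A\|^2)$-smooth objective, directly from \cite[Theorem~2.10]{beiser2020adaptive} and \cite[Theorem~3.7]{xie2020constrained}, so there is no in-paper argument to compare against. Your sketch is a faithful reconstruction of the argument in those references, and the overall strategy---descent lemma for the smooth part, conditional expectation $\E_{k,t}$, \Cref{con:Normcondition1proj} with Bienaym\'e's identity to control the variance, a strong-convexity inequality to convert the reduced-gradient norm into the function-value gap, and iteration via the tower property over $\mathbb{T}_{k,t}\supseteq\mathbb{T}_k$---is exactly the right one. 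The one step where your bookkeeping would not reproduce the \emph{stated} constant is the detour through \cref{eq:Equivalence}: lower-bounding $\|\E_{k,t}[R_{S_{k,t}}]\|^2 \geq \|R(x_{k,t},\lambda_k;\alpha,\eta)\|^2/(1+\theta_g)^2$ before invoking \cref{eq:AugStrnCnvIneq} injects an extra factor $(1+\theta_g)^{-2}$ into the contraction, yielding a rate of the form $1 - c\,(1-2\theta_g^2)\mu/\bigl((1+\theta_g)^2(L+\alpha\|A\|^2)\bigr)$ rather than the advertised $\rho_{\L} = 1 - (1-2\theta_g^2)\mu/(L+\alpha\|A\|^2)$. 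The cited proofs avoid this loss by never returning to the deterministic reduced gradient: they carry the quantity $\E_{k,t}[R_{S_{k,t}}]$ through the entire one-step analysis, establishing the requisite strong-convexity (PL-type) inequality for the \emph{expected stochastic} step directly. This is a constant-factor issue rather than a structural gap, but since the theorem asserts a specific $\rho_\L$, you would need to follow the references' treatment of the bias rather than the \cref{eq:Equivalence} detour to land on that exact value.
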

Using \Cref{prop:inequalities}, it can be shown that the gradient of the augmented Lagrangian function also converges to zero. That is, applying \Cref{prop:inequalities} to the augmented Lagrangian function, we have that
\begin{align}
   2\mu(\L(x_{k,t},\lambda_k;\alpha) - \L(x^*_k,\lambda;\alpha)) &\leq \|\nabla_x \L(x_{k,t},\lambda_k;\alpha)\|^2 \nonumber \\
   &\leq 2(L + \alpha \|A\|^2)(\L(x_{k,t},\lambda_k;\alpha) - \L(x^*_k,\lambda;\alpha)) \label{eq:well-known-Lag}.
\end{align}
Combining \cref{eq:LinRateAugLag} and \cref{eq:well-known-Lag}, it follows that,
\begin{align}\label{eq:normgrad_convergence}
    \E_k[\|\nabla_x \L(x_{k,t},\lambda_k;\alpha)\|^2] \leq \frac{L + \alpha \|A\|^2}{\mu} \rho_{\L}^t\|\nabla_x \L(x_{k,0},\lambda_k;\alpha)\|^2.   
\end{align}
The next theorem establishes pessimistic upper bounds on the sample sizes employed at each outer iteration $k \in \N$ and each inner iteration $t\in \N$, and the number of inner iterations $T_k$ required to satisfy \Cref{cond:tolr-III}. For the sake of brevity, in this complexity analysis, we employ \Cref{cond:tolr-III} with $\Tilde \theta_{e} = 0$, and also
assume that \Cref{ass:variance} holds with $\omega_1 = 1$ and $\omega_2 = \omega$, i.e., \begin{align}\label{eq:boundedvar}
\E_{\zeta}[\|\nabla F(x,\zeta) - \nabla f(x)\|^2] \leq \omega. \end{align}

\begin{theorem}\label{thm:TotalWorkComplexitystrcvx}
Suppose the conditions of \Cref{thm:projgradstrcnvx_func} are satisfied and \cref{eq:boundedvar} holds. Then the number of inner iterations $T_k$ required to satisfy \Cref{cond:tolr-III} with $\Tilde \theta_{e} = 0$ are bounded from above as follows:
\begin{align}\label{eq:Tkbnd_strcnvx}
    T_k \leq \left \lceil\log_{1/\rho_{\L}} \left(\frac{2(L + \alpha \|A\|^2)\left(\alpha^2\|A\|^2 \|c(x_{k-1})\|^2 + \|R(x_{k-1},\lambda_{k-1};\alpha,\eta)\|^2\right)}{\mu \Tilde \tau_k}\right) \right \rceil.
\end{align} 
Moreover, for any inner iteration $t < T_k$, the sample sizes $|S_{k,t}|$ are at most 
\begin{align}\label{eq:sktbnd_strcnvx}
    |S_{k,t}| \leq \frac{\omega}{\theta_g^2\Tilde \tau_k}.
\end{align} 
 \end{theorem}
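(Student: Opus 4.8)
The plan is to establish the two displayed bounds separately, in both cases exploiting that $\X = \R^n$: since the projection is the identity, the reduced gradients collapse onto gradients of the augmented Lagrangian, $R(x,\lambda;\alpha,\eta) = -\nabla_x\L(x,\lambda;\alpha)$ and $R_{S_{k,t}}(x_{k,t},\lambda_k;\alpha,\eta) = -\nabla_x\L_{S_{k,t}}(x_{k,t},\lambda_k;\alpha)$, so that by unbiasedness of the sampled gradient, $\E_{k,t}[R_{S_{k,t}}(x_{k,t},\lambda_k;\alpha,\eta)] = R(x_{k,t},\lambda_k;\alpha,\eta)$ exactly. In particular, unlike in the general constrained case, the right-hand side of \Cref{con:Normcondition1proj} no longer depends on $|S_{k,t}|$, which makes the batch-size count transparent, and \Cref{cond:tolr-III} with $\Tilde\theta_e = 0$ becomes the requirement $\E_k[\|\nabla_x\L(x_{k,t},\lambda_k;\alpha)\|^2] \le \Tilde\tau_k$.

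\emph{Bound on $T_k$.} Since the inner solver is warm-started at $x_{k,0} = x_{k-1}$, I would first control the starting subproblem gradient $\|\nabla_x\L(x_{k-1},\lambda_k;\alpha)\|^2$ in terms of quantities available at the previous outer iterate. Using the affine structure $\nabla c \equiv A$ together with the dual update $\lambda_k = \lambda_{k-1} - \alpha c(x_{k-1})$ gives the identity $\nabla_x\L(x_{k-1},\lambda_k;\alpha) = \nabla_x\L(x_{k-1},\lambda_{k-1};\alpha) + \alpha A^{T}c(x_{k-1})$; applying $\|a+b\|^2 \le 2\|a\|^2 + 2\|b\|^2$, $\|A^{T}c(x_{k-1})\| \le \|A\|\,\|c(x_{k-1})\|$, and $\nabla_x\L(x_{k-1},\lambda_{k-1};\alpha) = -R(x_{k-1},\lambda_{k-1};\alpha,\eta)$ then yields $\|\nabla_x\L(x_{k-1},\lambda_k;\alpha)\|^2 \le 2\alpha^2\|A\|^2\|c(x_{k-1})\|^2 + 2\|R(x_{k-1},\lambda_{k-1};\alpha,\eta)\|^2$. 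Substituting this into the linear-decay estimate \cref{eq:normgrad_convergence} (which is \Cref{thm:projgradstrcnvx_func} combined with \Cref{prop:inequalities} applied to the $\mu$-strongly convex subproblem), the termination requirement $\E_k[\|\nabla_x\L(x_{k,t},\lambda_k;\alpha)\|^2] \le \Tilde\tau_k$ is met as soon as $\rho_\L^{t}$ drops below the fraction appearing inside the logarithm in \cref{eq:Tkbnd_strcnvx}. Taking $\log_{1/\rho_\L}$ of both sides (legitimate since $\rho_\L \in [0,1)$) and rounding up gives the claimed bound on $T_k$; note that the right-hand side of \cref{eq:normgrad_convergence} is $\mathbb{T}_k$-measurable, so this bound is genuinely deterministic given $\mathbb{T}_k$, matching the form of \cref{eq:Tkbnd_strcnvx}.

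\emph{Bound on $|S_{k,t}|$.} Rewriting \Cref{con:Normcondition1proj} and using $\E_{k,t}[R_{S_{k,t}}(x_{k,t},\lambda_k;\alpha,\eta)] = -\nabla_x\L(x_{k,t},\lambda_k;\alpha)$, the smallest admissible sample size (ignoring integer rounding, as is customary) satisfies $|S_{k,t}| \le \E_\zeta[\|\nabla F(x_{k,t},\zeta) - \nabla f(x_{k,t})\|^2]/\bigl(\theta_g^2\|\nabla_x\L(x_{k,t},\lambda_k;\alpha)\|^2\bigr)$; bounding the numerator by $\omega$ via \cref{eq:boundedvar}, and using that for every $t < T_k$ the loop has not yet terminated so $\|\nabla_x\L(x_{k,t},\lambda_k;\alpha)\|^2 = \|R(x_{k,t},\lambda_k;\alpha,\eta)\|^2 > \Tilde\tau_k$, I obtain \cref{eq:sktbnd_strcnvx}.

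\emph{Main obstacle.} The cleanest part — the logarithmic bound on $T_k$ — rests on the exact cancellation produced when $\lambda_k = \lambda_{k-1} - \alpha c(x_{k-1})$ is inserted into $\nabla_x\L(x_{k-1},\lambda_k;\alpha)$: one must keep the constants straight so that precisely $2\alpha^2\|A\|^2\|c(x_{k-1})\|^2 + 2\|R(x_{k-1},\lambda_{k-1};\alpha,\eta)\|^2$ emerges, up to the $(L+\alpha\|A\|^2)/\mu$ prefactor from \cref{eq:normgrad_convergence}. The more delicate point is the interface between the in-expectation termination criterion of \Cref{cond:tolr-III} and the pathwise statement \cref{eq:sktbnd_strcnvx}: to cap $|S_{k,t}|$ one needs the pathwise lower bound $\|\nabla_x\L(x_{k,t},\lambda_k;\alpha)\|^2 > \Tilde\tau_k$ for $t < T_k$, which I would secure by taking the implemented stopping test to be evaluated on the realized reduced-gradient norm — this still delivers \Cref{cond:tolr-III} in expectation at termination and remains consistent with the deterministic bound on $T_k$ derived above.
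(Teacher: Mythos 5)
Your overall route coincides with the paper's: the same warm-start decomposition $\nabla_x\L(x_{k-1},\lambda_k;\alpha)=\nabla_x\L(x_{k-1},\lambda_{k-1};\alpha)+\alpha A^{T}c(x_{k-1})$ driven by the dual update, the same $\|a+b\|^2\le 2\|a\|^2+2\|b\|^2$ split producing $2\alpha^2\|A\|^2\|c(x_{k-1})\|^2+2\|R(x_{k-1},\lambda_{k-1};\alpha,\eta)\|^2$, the same insertion into the linear-decay estimate \cref{eq:normgrad_convergence}, and the same variance-over-threshold argument for the sample sizes. In fact your sample-size step is slightly \emph{cleaner} than the paper's: by observing that $\X=\R^n$ makes $\E_{k,t}[R_{S_{k,t}}(x_{k,t},\lambda_k;\alpha,\eta)]=R(x_{k,t},\lambda_k;\alpha,\eta)$ exactly, you avoid routing through \cref{eq:Equivalence} and recover the constant $\omega/(\theta_g^2\Tilde{\tau}_k)$ stated in \cref{eq:sktbnd_strcnvx}, whereas the paper's own proof picks up an extra $(1+\theta_g)^2$ factor at this point.

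There is, however, one genuine gap, precisely at the place you flag as delicate. You propose to define $T_k$ as the first $t$ at which the \emph{realized} norm $\|R(x_{k,t},\lambda_k;\alpha,\eta)\|^2$ drops below $\Tilde{\tau}_k$. That choice does give the pathwise lower bound needed for \cref{eq:sktbnd_strcnvx} and does deliver \Cref{cond:tolr-III} at termination, but it does \emph{not} yield the deterministic upper bound \cref{eq:Tkbnd_strcnvx}: the decay \cref{eq:normgrad_convergence} controls only $\E_k[\|R(x_{k,t},\lambda_k;\alpha,\eta)\|^2]$, and a conditional expectation falling below $\Tilde{\tau}_k$ does not force the realized norm below $\Tilde{\tau}_k$ on every sample path (e.g.\ the realized value could be $2\Tilde{\tau}_k$ with probability $1/2$ and $0$ otherwise). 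So with your stopping rule $T_k$ can exceed the claimed ceiling with positive probability, and "remains consistent with the deterministic bound" is not justified. The paper closes this by defining $T_k$ in \cref{eq:min_cond_3} as the first $t$ at which the \emph{minimum} of the realized squared norm and its conditional expectation falls below $\Tilde{\tau}_k$: the expectation branch guarantees the deterministic cap on $T_k$, while for every $t<T_k$ both quantities — in particular the realized one — still exceed $\Tilde{\tau}_k$, which is exactly what the sample-size bound needs. Adopting that hybrid stopping rule (or any rule that terminates no later than the expectation-based threshold while certifying the realized norm stays above $\Tilde{\tau}_k$ beforehand) repairs your argument without changing anything else.
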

\begin{proof}
At any outer iteration $k \in N$, let $T_{k}$ denote the first inner iteration $t$ at which the following condition holds:
\begin{align}\label{eq:min_cond_3}
\min\left\{\| R(x_{k,t},\lambda_{k};\alpha,\eta) \|^2, \E_{\ck{k,t}}\left[\| R(x_{k,t},\lambda_{k};\alpha,\eta) \|^2\right]\right\} \leq \Tilde \tau_k.\end{align}
Hence, at $t = T_k$, \Cref{cond:tolr-III} is satisfied with $\Tilde \theta_{e} = 0$ at $x_k = x_{k,t}$.
Therefore, for all $t < T_{k}$, it follows that \begin{align}\label{eq:r_lowvalues}
    \| R(x_{k,t},\lambda_{k};\alpha,\eta) \|^2 > \Tilde \tau_k. \end{align}
Using \cref{eq:r_lowvalues,eq:boundedvar,eq:Equivalence}, and choosing the smallest sample size $|S_{k,t}|$ satisfying \cref{eq:batchsize_def}, it follows that
\begin{align}
    |S_{k,t}| \leq \frac{\omega\rb{(1 + \theta_g)^2}}{\theta_g^2\Tilde \tau_k}.
\end{align}
Now, let us bound the number of inner iterations required to satisfy \cref{eq:min_cond_3}. Using \cref{eq:normgrad_convergence,eq:redgrad} \ck{and $\X = \R^n$} it follows that
\begin{align}\label{eq:Rconvbnd}
\E_k\left[\left\|R(x_{k,t},\lambda_{k};\alpha,\eta)\right\|^2\right] &\leq \frac{L + \alpha \|A\|^2}{\mu}\rho_{\L}^t\left\|R(x_{k,0},\lambda_{k};\alpha,\eta)\right\|^2.
\end{align}
Substituting $x_{k,0} = x_{k-1}$, it follows that
\begin{align}
&\|R(x_{k-1},\lambda_{k};\alpha,\eta)\|^2 \nonumber\\
&~~~~~=\|R(x_{k-1},\lambda_{k};\alpha,\eta) - R(x_{k-1},\lambda_{k-1};\alpha,\eta) + R(x_{k-1},\lambda_{k-1};\alpha,\eta)\|^2 \nonumber \\ 
&~~~~~\leq 2\|R(x_{k-1},\lambda_{k};\alpha,\eta) - R(x_{k-1},\lambda_{k-1};\alpha,\eta)\|^2 + 2 \|R(x_{k-1},\lambda_{k-1};\alpha,\eta)\|^2 \label{eq:normg_2}
\end{align}
where the last inequality is due to $\|a + b\|^2 \leq 2\|a\|^2 + 2\|b\|^2$ for any $a,b \in \R^n$. Consider 
\begin{align}
 \|R(x_{k-1},\lambda_{k};\alpha,\eta) - & R(x_{k-1},\lambda_{k-1};\alpha,\eta)\| \nonumber \\
 &=\|\nabla_x \L(x_{k-1},\lambda_k;\alpha) - \nabla_x \L(x_{k-1},\lambda_{k-1};\alpha)\| \nonumber \\
 &= \|\langle \lambda_k - \lambda_{k-1}, \nabla c(x_{k-1}) \rangle\| \nonumber \\
 &\leq \alpha \|A\| \|c(x_{k-1})\| \label{eq:normg_3}
\end{align}
where the first equality is due to \cref{eq:redgrad} and the inequality is due to $\lambda_k = \lambda_{k-1} - \alpha c(x_{k-1})$. Using \cref{eq:Rconvbnd}, \cref{eq:normg_2}, and \cref{eq:normg_3}, it follows that
\begin{multline}
\E_k\left[\left\|R(x_{k,t},\lambda_{k};\alpha,\eta)\right\|^2\right] \\
\leq \frac{L + \alpha \|A\|^2}{\mu}\rho_{\L}^t\left(2\alpha^2\|A\|^2 \|c(x_{k-1})\|^2 + 2\|R(x_{k-1},\lambda_{k-1};\alpha,\eta)\|^2\right).
\end{multline}
Therefore, for any 
\begin{align*}
t \geq \left \lceil\log_{1/\rho_{\L}} \left(\frac{2(L + \alpha \|A\|^2)\left(\alpha^2\|A\|^2 \|c(x_{k-1})\|^2 + \|R(x_{k-1},\lambda_{k-1};\alpha,\eta)\|^2\right)}{\mu \Tilde \tau_k}\right) \right \rceil
\end{align*}
we have, 
\begin{align*}
\E_k\left[\left\|R(x_{k,t},\lambda_{k};\alpha,\eta)\right\|^2\right] &\leq \Tilde \tau_k.
\end{align*}
Using \cref{eq:min_cond_3}, it follows that,
\begin{align}
    T_k \leq \left \lceil \log_{1/\rho_{\L}} \left(\frac{2(L + \alpha \|A\|^2)\left(\alpha^2\|A\|^2 \|c(x_{k-1})\|^2 + \|R(x_{k-1},\lambda_{k-1};\alpha,\eta)\|^2\right)}{\mu \Tilde \tau_k}\right) \right \rceil
\end{align}
which completes the proof. 
\end{proof}

We are now ready to provide the main complexity theorem for this subsection. 
\begin{theorem}\label{thm:TotalWorkComplexity_specialcase}
Suppose the conditions of \Cref{thm:Linearconv_norm,thm:projgradstrcnvx_func}, and \cref{eq:boundedvar} hold. Then the number of outer iterations to get an $\epsilon-$accurate solution $(x_k,\lambda_{k+1})$ satisfying \cref{eq:epsiloncompcond_str} is 
\begin{align}\label{eq:iter_comp_strcnv}
    K_{\epsilon} = \left\lceil \log_{1/\rho}\left(\frac{\max\{A_1,A_2\}}{\epsilon}\right) \right \rceil=\mathcal{O}\left(\log(1/{\epsilon})\right),
\end{align}
where $A_1,A_2$ are defined in \Cref{thm:Linearconv_norm},  and the expected number of stochastic gradient evaluations is 
\begin{align}
    \E[\mathcal{W}] = \mathcal{O}\left(\epsilon^{-1}\log(1/\epsilon)\right).
\end{align}
\end{theorem}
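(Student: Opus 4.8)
The plan is to combine the linear convergence rates from \Cref{thm:Linearconv_norm}, which fix the number of outer iterations, with the per-subproblem bounds of \Cref{thm:TotalWorkComplexitystrcvx}, which bound both the number of inner iterations $T_k$ and each inner sample size $|S_{k,t}|$, and then sum the resulting geometric series over the outer iterations. First I would settle the outer iteration count: by \cref{eq:exp_termination}, $\E[\|c(x_k)\|^2] \le A_1\rho^k$ and $\E[\|\nabla\ell_x(x_k,\lambda_{k+1})\|^2] \le A_2\rho^k$ with $\rho<1$, so both drop below $\epsilon$ as soon as $\rho^k \le \epsilon/\max\{A_1,A_2\}$, i.e.\ once $k \ge \log_{1/\rho}(\max\{A_1,A_2\}/\epsilon)$; this gives $K_\epsilon$ exactly as in \cref{eq:iter_comp_strcnv}, and $K_\epsilon = \mathcal{O}(\log(1/\epsilon))$ because $\rho$ depends only on the problem and algorithm parameters. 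Note that, unlike in \Cref{thm:TotalWorkComplexity}, no post-optimization step à la \Cref{corr:postop} is required here, since \Cref{thm:Linearconv_norm} already controls both the feasibility and the stationarity error at the iterates $x_k$ themselves, so that $(x_{K_\epsilon},\lambda_{K_\epsilon+1})$ satisfies \cref{eq:epsiloncompcond_str}.

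Next I would estimate the expected work $\E[\mathcal{W}_k]$ at outer iteration $k$. Writing $\mathcal{W}_k = \sum_{t=0}^{T_k-1}|S_{k,t}|$ and using the pathwise sample-size bound \cref{eq:sktbnd_strcnvx}, $|S_{k,t}| \le \omega/(\theta_g^2\,\tilde\tau_k)$ for all $t<T_k$, yields $\mathcal{W}_k \le \omega T_k/(\theta_g^2\,\tilde\tau_k)$, hence $\E[\mathcal{W}_k] \le \omega\,\E[T_k]/(\theta_g^2\,\tilde\tau_k)$. To bound $\E[T_k]$ I would take expectations in \cref{eq:Tkbnd_strcnvx} and push the expectation inside the logarithm via Jensen's inequality (concavity of $\log$), which reduces the task to bounding $\E[\|c(x_{k-1})\|^2]$ and $\E[\|R(x_{k-1},\lambda_{k-1};\alpha,\eta)\|^2]$. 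The first is at most $A_1\rho^{k-1}$ by \Cref{thm:Linearconv_norm}; the second is at most $\tilde\tau_{k-1}$ by taking the full expectation of \Cref{cond:tolr-III} (with $\tilde\theta_e = 0$) at iteration $k-1$. Since $\tilde\tau_k$ decays geometrically with ratio $1/a$ and $1/a \le \rho$ by the definition of $\rho$, both quantities are $\mathcal{O}(\rho^{k-1})$, so the argument of the logarithm is $\mathcal{O}(\rho^{k-1}/\tilde\tau_k) = \mathcal{O}((\rho a)^k)$, which gives $\E[T_k] = \mathcal{O}(k) = \mathcal{O}(K_\epsilon)$ uniformly over $k \le K_\epsilon$.

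Finally I would assemble the total: $\E[\mathcal{W}] = \sum_{k=0}^{K_\epsilon}\E[\mathcal{W}_k] \le (\omega/\theta_g^2)\,\mathcal{O}(K_\epsilon)\sum_{k=0}^{K_\epsilon}\tilde\tau_k^{-1}$. Because $\tilde\tau_k^{-1}$ grows geometrically at rate $a$, the sum is dominated by its last term, $\sum_{k=0}^{K_\epsilon}\tilde\tau_k^{-1} = \mathcal{O}(\tilde\tau_{K_\epsilon}^{-1}) = \mathcal{O}(a^{K_\epsilon})$; choosing the tolerance-decay rate $a$ in balance with the outer rate, so that $\rho = 1/a$ (equivalently $1/a = 1 - \alpha\mu_\alpha(1-\theta_e^2)/2$), gives $a^{K_\epsilon} = \rho^{-K_\epsilon} = \mathcal{O}(\max\{A_1,A_2\}/\epsilon) = \mathcal{O}(\epsilon^{-1})$. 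Multiplying by the $\mathcal{O}(K_\epsilon) = \mathcal{O}(\log(1/\epsilon))$ factor coming from $\E[T_k]$ then delivers $\E[\mathcal{W}] = \mathcal{O}(\epsilon^{-1}\log(1/\epsilon))$, as claimed.

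The step I expect to be the main obstacle is the bound on $\E[T_k]$: the stopping-time estimate \cref{eq:Tkbnd_strcnvx} carries the random quantities $\|c(x_{k-1})\|^2$ and $\|R(x_{k-1},\lambda_{k-1};\alpha,\eta)\|^2$ inside a logarithm and a ceiling, so one must first use concavity of $\log$ to bring in the expectation and then, crucially, exploit $1/a \le \rho$ so that the geometric decay of the numerator supplied by \Cref{thm:Linearconv_norm} and \Cref{cond:tolr-III} is never slower than that of $\tilde\tau_k$ in the denominator; without this the logarithm's argument, and hence $\E[T_k]$, would grow unboundedly. A secondary bookkeeping point is matching the tolerance-decay rate $a$ to the outer linear rate $\rho$ so that the geometric sum $\sum_k\tilde\tau_k^{-1}$ collapses to $\mathcal{O}(\epsilon^{-1})$ rather than to a larger power of $1/\epsilon$.
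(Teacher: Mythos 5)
Your proposal is correct and follows essentially the same route as the paper's proof: the outer count from the linear rates in \Cref{thm:Linearconv_norm}, the per-iteration work bound $\E[\mathcal{W}_k]\le \omega\,\E[T_k]/(\theta_g^2\tilde\tau_k)$ from \cref{eq:sktbnd_strcnvx}, Jensen's inequality applied to the logarithm in \cref{eq:Tkbnd_strcnvx} to get $\E[T_k]=\mathcal{O}(k)$, and the geometric sum $\sum_k a^k$ collapsing to $\mathcal{O}(a^{K_\epsilon})$. The only (harmless) deviations are that you bound $\E[\|R(x_{k-1},\lambda_{k-1};\alpha,\eta)\|^2]$ directly by $\tilde\tau_{k-1}$ from \Cref{cond:tolr-III} rather than via \cref{eq:optToAuglag} and the $A_2\rho^{k-1}$ bound of \Cref{thm:Linearconv_norm}, and that you explicitly flag the requirement $a\le 1/\rho$ (so that $a^{K_\epsilon}=\mathcal{O}(\epsilon^{-1})$), a balancing condition the paper uses implicitly through its definition $\rho=\max\{1-\alpha\mu_\alpha(1-\theta_e^2)/2,\,1/a\}$.
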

\begin{proof}
\Cref{eq:iter_comp_strcnv} directly follows from \cref{eq:exp_termination}. Now, consider the sample complexity at each outer iteration $k$
\begin{align}\label{eq:compperouteriter_strcnv}
\mathcal{W}_k \defeq  \sum_{t=0}^{\rb{T_k-1}}|S_{k,t}| &\leq  \frac{\omega\rb{(1 + \theta_g)^2}}{\theta^2_g\Tilde \tau_k}\ck{T_k},
\end{align}
where the inequality is due to \cref{eq:sktbnd_strcnvx}. 
Therefore, the expected total number of gradient evaluations is found to be
\begin{align} \label{eq:comp1}
    \E \left[ \sum^{\rb{K-1}}_{k = 0} \mathcal{W}_k\right] &\leq \E \left[ \sum^{\rb{K-1}}_{k = 0} \frac{\omega\rb{(1 + \theta_g)^2}}{\theta^2_g\Tilde \tau_k}\ck{T_k}\right] \nonumber \\
    &\leq \sum^{\rb{K-1}}_{k = 0} \frac{\omega \rb{(1 + \theta_g)^2}}{\theta^2_g\Tilde \tau_k}\ck{\E[T_k]} \nonumber \\
    &\leq \sum^{\rb{K-1}}_{k = 0} \frac{4\omega \rb{(1 + \theta_g)^2} \|A\|^2 a^k}{\theta^2_g\tau_{0}\mu^2}\ck{\E[T_k]} 
    \,,
\end{align}
where the last inequality is due to $\Tilde \tau_k = \frac{\mu^2 \tau_k}{4\|A\|^2}$ and $\tau_k = \tau_0(1/a)^k$. 
Using \cref{eq:Tkbnd_strcnvx} and taking the full expectation of both sides, it follows that
\begin{align}
&\E[T_k - 1] \nonumber \\ & \leq \E\left [ \log_{1/\rho_{\L}} \left(\frac{2(L + \alpha \|A\|^2)\left(\alpha^2\|A\|^2 \|c(x_{k-1})\|^2 + \|R(x_{k-1},\lambda_{k-1};\alpha,\eta)\|^2\right)}{\mu \Tilde \tau_k}\right) \right ] \nonumber \\
& = \E\left [ \log_{1/\rho_{\L}} \left(\frac{2(L + \alpha \|A\|^2)\left(\alpha^2\|A\|^2 \|c(x_{k-1})\|^2 + \|\nabla_x\ell(x_{k-1},\lambda_{k};\alpha,\eta)\|^2 \right)}{\mu \Tilde \tau_k}\right) \right ] \nonumber \\
& \leq \log_{1/\rho_{\L}} \left(\frac{2(L + \alpha \|A\|^2)\left(\alpha^2\|A\|^2 \E[\|c(x_{k-1})\|^2] + \E[\|\nabla_x\ell(x_{k-1},\lambda_{k};\alpha,\eta)\|^2]\right)}{\mu \Tilde \tau_k}\right) \nonumber \\
& \leq \log_{1/\rho_{\L}} \left(\frac{2(L + \alpha \|A\|^2)\left(\alpha^2\|A\|^2 A_1 + A_2 \right)\rho^{k-1}}{\mu \Tilde \tau_k}\right) \nonumber \\
& = \log_{1/\rho_{\L}} \left(\frac{8\|A\|^2(L + \alpha \|A\|^2) \left(\alpha^2\|A\|^2 A_1 + A_2 \right) \rho^{k-1} a^k}{\mu^3 \tau_0}\right) = \mathcal{O}(k)
\label{eq:tbnd}
\,,\end{align}
where the second line is due to \cref{eq:optToAuglag}, third line due to Jensen's inequality, fourth line is due to \Cref{thm:Linearconv_norm}, and the last line follows from $\Tilde \tau_k = \frac{\mu^2 \tau_k}{4\|A\|^2}$ and $\tau_k = \tau_0(1/a)^k$. Therefore, \cref{eq:tbnd} shows that there exist $s_1 >0$ and $s_2 >0$ such that
\begin{align} \label{eq:tbnd_new}
\E[T_k] \leq s_1 + s_2 k.
\end{align}
Substituting \cref{eq:tbnd_new} into \cref{eq:comp1}, we have that
\begin{align*}
    \E \left[ \sum^{\rb{K-1}}_{k = 0} \mathcal{W}_k\right] &\leq \sum^{\rb{K-1}}_{k = 0} \frac{4\omega \rb{(1 + \theta_g)^2} \|A\|^2 a^k}{\theta^2_g\tau_{0}\mu^2}( s_1 + s_2 k) \\
    &\leq \sum^{\rb{K-1}}_{k = 0} \frac{4\omega \rb{(1 + \theta_g)^2}\|A\|^2 a^k}{\theta^2_g\tau_{0}\mu^2}( s_1 + s_2 K) \\
    &\leq \frac{4\omega \rb{(1 + \theta_g)^2}\|A\|^2 a^\rb{{K}}}{\theta^2_g\tau_{0}\mu^2(a-1)}( s_1 + s_2 K) \\
    &=\mathcal{O}\left(\epsilon^{-1}\log(1/\epsilon)\right)
    ,
\end{align*}
where the last line is due to \cref{eq:iter_comp_strcnv}.
\end{proof}

\begin{remark}
It is important to emphasize that performing sampling complexity analysis for adaptive sampling methods is quite challenging with present optimization techniques. However, these methods fall under a general class of increasing batch size methods where one can establish theoretical sample complexity analysis that shows stochastic gradient and increasing batch size mechanisms have similar total sample complexity results (see, e.g., \cite{byrd2012sample}). We have established pessimistic (i.e., worst-case) complexity bounds where the sample sizes at each inner iteration are bounded above by the largest sample size employed across all inner iterations at any given outer iteration $k$ (cf.~\cref{eq:sktbnd_strcnvx}). Owing to this pessimistic bound on sample sizes, the overall complexity bound $\mathcal{O}\left(\epsilon^{-1}\log(1/\epsilon)\right)$ is slightly worse than the optimal sample complexity $\mathcal{O}\left(\epsilon^{-1}\right)$ for strongly convex stochastic programming problems \cite{xie2019si,bottou2018optimization}.
\end{remark}

\section{Practical Algorithm} \label{sec:practical}

In this section, we present a complete and practical adaptive sampling augmented Lagrangian (ASAL) algorithm that uses an adaptive sampling proximal gradient method to inexactly solve the augmented Lagrangian subproblems. We describe the mechanism by which the sample size is selected at each inner iteration and the mechanism to terminate the subproblem solver.

The sample size selection and inexactness conditions described in \Cref{subsec: adasample,subsec: tolr} respectively are impractical as they require computing exact variances or deterministic quantities such as $\L(x_k, \lambda_k;\alpha)$ and $R(x_{k},\lambda_{k};\alpha,\eta)$. That being said, these quantities can be approximated using sample variances and sampled stochastic counterparts of the deterministic quantities following the ideas proposed in \cite{bollapragada2018adaptive,xie2020constrained,beiser2020adaptive}.

\smallskip
\paragraph{Sample Size Selection} We propose the following practical sampling test to approximate \Cref{con:Normcondition1proj} where the left-hand-side is the sample variance that approximates the exact variance and the right-hand-side is the stochastic projected (reduced) gradient that approximates the expectation of this quantity. 

\begin{test}[Practical Sampling Test]
\label{test:NormTest1proj}
For any given $\theta_g \geq 0$, the sample size $|S_{k,t}|$ satisfies
	    	\begin{equation}
	\label{eq:NormTest1proj}
		\frac{\tfrac{1}{|S_{k,t}|-1}\sum_{\zeta_i \in S_{k,t}}\|\nabla F(x_{k,t},\zeta_i) - \nabla F_{S_{k,t}}(x_{k,t})\|^2}{|S_{k,t}|}
		\leq
		\theta_g^2 \| R_{S_{k,t}}(x_{k,t},\lambda_{k};\alpha,\eta) \|^2.
	\end{equation}
	\end{test}
In our practical \Cref{alg:ASAL}, we aim to satisfy \Cref{test:NormTest1proj} at each inner iteration using the following procedure. \rev{Whenever \cref{eq:NormTest1proj} is \emph{not} satisfied at the current inner iteration $t$, we attempt to ensure \cref{eq:NormTest1proj} will be satisfied at the next inner iteration $t+1$ by using the relative variance,
\begin{align}\label{eq:nu}
    \nu_t \defeq \frac{\tfrac{1}{|S_{k,t}|-1}\sum_{\zeta_i \in S_{k,t}}\|\nabla F(x_{k,t},\zeta_i) - \nabla F_{S_{k,t}}(x_{k,t})\|^2}{ \theta_g^2 |S_{k,t}| \| R_{S_{k,t}}(x_{k,t},\lambda_{k};\alpha,\eta) \|^2}
    ,
\end{align}
to select the next sample size. More specifically, we set $|S_{k,t+1}| = \lceil \nu_t |S_{k,t}|\rceil$ whenever $\nu_t > 1$.}

\rev{
On the other hand, if \cref{eq:NormTest1proj} \emph{is} satisfied at the current inner iteration $t$ (i.e., $\nu_t \leq 1$), then keeping the sample size unchanged, $|S_{k,t+1}| = |S_{k,t}|$, is a simple rule to maintain control over the sample variance. However, if $\nu_t \ll 1$ is sufficiently small and the current sample size $|S_{k,t}| \gg 1$ is sufficiently large, then it may be beneficial to reduce cost by decreasing the sample size. We explore this possibility by providing an opportunity for the sample size to decrease like $|S_{k,t+1}| = \lceil \nu_t |S_{k,t}|\rceil$ until $|S_{k,t}|$ reaches a minimum value.}\footnote{\rev{Although the sample sizes are allowed to decrease, we do not observe sample size decreases in our numerical experiments; cf.~\Cref{rem:SampleSizeDecrease}.}}
\texttt{Lines 8} through \texttt{16} in \Cref{alg:ASAL} encapsulate the sample size selection procedure. 

\smallskip
\paragraph{Inexactness Conditions}
We propose a practical test to terminate the inner subproblem solver. Owing to the difficulty in computing the optimal quantities $c(x^*_k)$ and $\L(x^*_k,\lambda_k;\alpha)$, and the equivalence of \Cref{cond:tolr-I,cond:tolr-II,cond:tolr-III}, we design the practical test based on \Cref{cond:tolr-III}.  Following a similar procedure employed in approximating the sample size test conditions, we approximate the projected (reduced) gradient with its stochastic counterpart and the optimal constraint violation with the current constraint violation. The resulting practical test is as follows:

\begin{test}[Practical Tolerance Test]\label{test:tolr-III}
	For any given $\Tilde \theta_e \in [0,1)$ and $\Tilde \tau_k \geq 0$ with $\lim_{k \rightarrow \infty} \Tilde \tau_k = 0$,  
	\begin{align}\label{eq:TolrNormtest}
        \| R_{S_{k,t}}(x_{k,t},\lambda_{k};\alpha,\eta) \|^2 \leq \Tilde \theta_e^2 \|c(x_{k,t})\|^2 + \Tilde \tau_k.
    \end{align}
\end{test}
We terminate the inner subproblem whenever \cref{eq:TolrNormtest} is violated. \Cref{alg:ASAL} provides a complete description of the ASAL algorithm.

\begin{algorithm}\caption{Adaptive Sampling Augmented Lagrangian (ASAL) Method}
\label{alg:ASAL}
\textbf{Input:} $x_{-1} \in \R^n$, $\lambda_0 \in \R^m$, step size $\eta>0$, penalty parameter $\alpha > 0$, initial sample size $|S_{0,0}|$, sample size test parameters ($\theta_g > 0, \nu_l \in (0,1), s_l > 0, s_{min} > 0$), inexactness tolerance parameters $(\Tilde \theta_e \in [0,1), \Tilde \tau_k \geq 0)$ \\
\textbf{Initialization:} 
Set $k \gets 0$\begin{algorithmic}[1]
\LOOP
        \STATE Set $t \gets 0$
    \STATE Set $x_{k,0} \gets x_{k-1}$
    \REPEAT
		            \STATE Choose a set $S_{k,t}$ consisting of $|S_{k,t}|$ i.i.d. realizations of $\zeta$
	\STATE Compute $R_{S_{k,t}}(x_{k,t},\lambda_{k};\alpha,\eta)$ via \cref{eq:redgradsk,eq:gradLagsk}         	\STATE Update $x_{k,t+1} \gets x_{k,t} + \eta R_{S_{k,t}}(x_{k,t},\lambda_{k};\alpha,\eta)$\;
    	\IF{\Cref{test:NormTest1proj} is not satisfied}
		        \STATE Set $|S_{k,t+1}| \gets \lceil \nu_t |S_{k,t}|\rceil$
	\ELSE
            \IF{$\nu_t < \nu_l$ and $|S_{k,t}| > s_l$}
                                \STATE Set $|S_{k,t+1}| \gets \max\{s_{min},\lceil \nu_t |S_{k,t}|\rceil \}$
            \ELSE
                                \STATE Set $|S_{k,t+1}| \gets |S_{k,t}|$
            \ENDIF
        \ENDIF
	\STATE Set $t \gets t+1$
    \UNTIL{\Cref{test:tolr-III} is satisfied}        \STATE Set $x_{k} \gets x_{k,t}$
        \STATE Update $\lambda_{k+1} \gets \lambda_k - \alpha c(x_{k})$
    \STATE Set $|S_{k+1,0}| \gets |S_{k,t}|$
    \STATE Set $k \gets k+1$
\ENDLOOP

\end{algorithmic}
\end{algorithm}

\section{Numerical results} \label{sec:numerical_results}
In this section, we study the performance of ASAL (\Cref{alg:ASAL}) using model problems from machine learning (\Cref{sub:logistic_regression_with_disparate_impact_constraints}) and engineering (\Cref{sub:optimal_truss_design,sub:topology_optimization}). We implement \Cref{test:tolr-III} with $\tilde \theta_{e} = 0$ and $\tilde \tau_{k} = \tau_{0}/(k+1)$, treating $\tau_0$ as a hyperparameter for this numerical study.

\subsection{Logistic regression with multiple disparate impact constraints} \label{sub:logistic_regression_with_disparate_impact_constraints}
{We first consider a constrained logistic regression problem.}
A decision-making system suffers from \emph{disparate impact} if it provides outputs that affect a group of people sharing a value of a sensitive feature more frequently than other groups \cite{barocas2016big}.
In \cite[Section~4.4]{zafar2019fairness}, it is shown that disparate impact can be controlled in binary classification problems by applying deterministic constraints.
More explicitly, we consider the optimization problem
\begin{equation}\label{eq:log_reg_prob}
	\begin{alignedat}{3}
		&\mathrm{minimize}~
		         &&\frac{1}{N}\sum_{i=1}^N\big[\log( 1 + \exp(-z_i\langle x, y_i \rangle) )\big] + \frac{\gamma}{2}\|x\|^2
		\\
		&\text{subject to }~
		&&\langle a_1, x \rangle = b_1,\quad
		|\langle a_2, x \rangle| \leq b_2,
	\end{alignedat}
\end{equation}
where $x \in \mathbb{R}^n$ is the optimization variable and $(y_i,z_i) \in \mathbb{R}^n\times\{-1,1\}$ are input/output pairs from a classification data set.
Here, $\gamma > 0$ is a fixed Tikhonov regularization parameter.
Meanwhile, $a_1,\,a_2 \in \mathbb{R}^n$ and $b_1,\,b_2\geq 0$ are constraint parameters.
In \cite{barocas2016big}, it is suggested to take, e.g., $a_1 = \mathbb{E}_{y,s} \big[(s-\mathbb{E}_s[s])y\big]$, where $s$ is a secondary observable, in addition to $y$. However, for the purpose of demonstration, we arbitrarily set $a_1$ and $a_2$ from samples drawn for a standard multivariate normal distribution. Likewise, we set $b_1 = 0.1,b_2 = 0.02$. The initial $x_{-1}$ and $\lambda_0$ variables are chosen to be zero vectors, and we set $\gamma = 1/N$.

In this experiment, we use the \texttt{mushroom} classification data set from the LIBSVM collection \cite{10.1145/1961189.1961199}.
The size of this data set is $N = 8124$, and the dimension of the problem is $n=112$.
In order to evaluate the performance of ASAL, we record the feasibility and stationarity errors \cref{eq:PrimalErrors} until 200 training epochs (i.e., 200$N$ cumulative gradient evaluations) have elapsed.
We then compare ASAL to three separately-tuned fixed-batch-size implementations of ASAL using $10\%,~20\%,$ and $50\%$ of the data set size at each iteration, respectively.
In this experiment we use $\theta_g = 0.99$, $\nu_{\mathrm{l}} = 0.5$, $s_{\mathrm{l}} = 0.1N$, and $s_{\min} = 0.1N$.
The value of $\theta_g$ is \emph{not tuned} and is, instead, set at an arbitrary value close to the suggestion for unconstrained problems in \cite{carlon2020multi,espath2021equivalence}.
The values of the other three fixed hyperparameters are also set arbitrarily.
Yet, they appear to have little to no effect on performance; cf.~\Cref{rem:SampleSizeDecrease}.

We treat $\tau_0$, $\alpha$ and the step size $\eta$ as tunable hyperparameters.
All of the hyperparameters are tuned using the following procedure:
We run each augmented Lagrangian algorithm for all possible combinations of $\tau_0 = 10^{4},\, 10^{3},\, 10^{2},\, 10^{1},\, 10^0,\, 10^{-1}$, $\eta = 10^{-1},\, 10^{-2},\, 10^{-3},\, 10^{-4},\, 10^{-5},\, 10^{-6}$, and $\alpha = 10^2,\,10^1,\,10^0,\, 10^{-1},\, 10^{-2}$.
Then, for each algorithm, we select the run with the smallest average objective function value in the final 5 inner iterations among all runs whose minimum feasibility error in the final 30 inner iterations is less than feasibility tolerance $10^{-4}$.

The stationarity and feasibility errors corresponding to the best hyperparameters for each algorithm are overlaid in \Cref{fig:log_reg_plots}.
Because the hyperparameter tuning procedure we have used seeks the best stationarity error among runs reaching a feasibility error threshold, it is no surprise that ASAL and each of the three baseline algorithms achieve a similar \emph{minimal} feasibility error (around feasibility tolerance $10^{-4}$).
Nevertheless, we observe that ASAL outperforms the three baseline algorithms with respect to stationarity error. We also present similar results for \texttt{australian} data set from the LIBSVM collection \cite{10.1145/1961189.1961199} in \Cref{sec:appendix}.

\begin{figure}
\centering
\includegraphics[height=3.05cm]{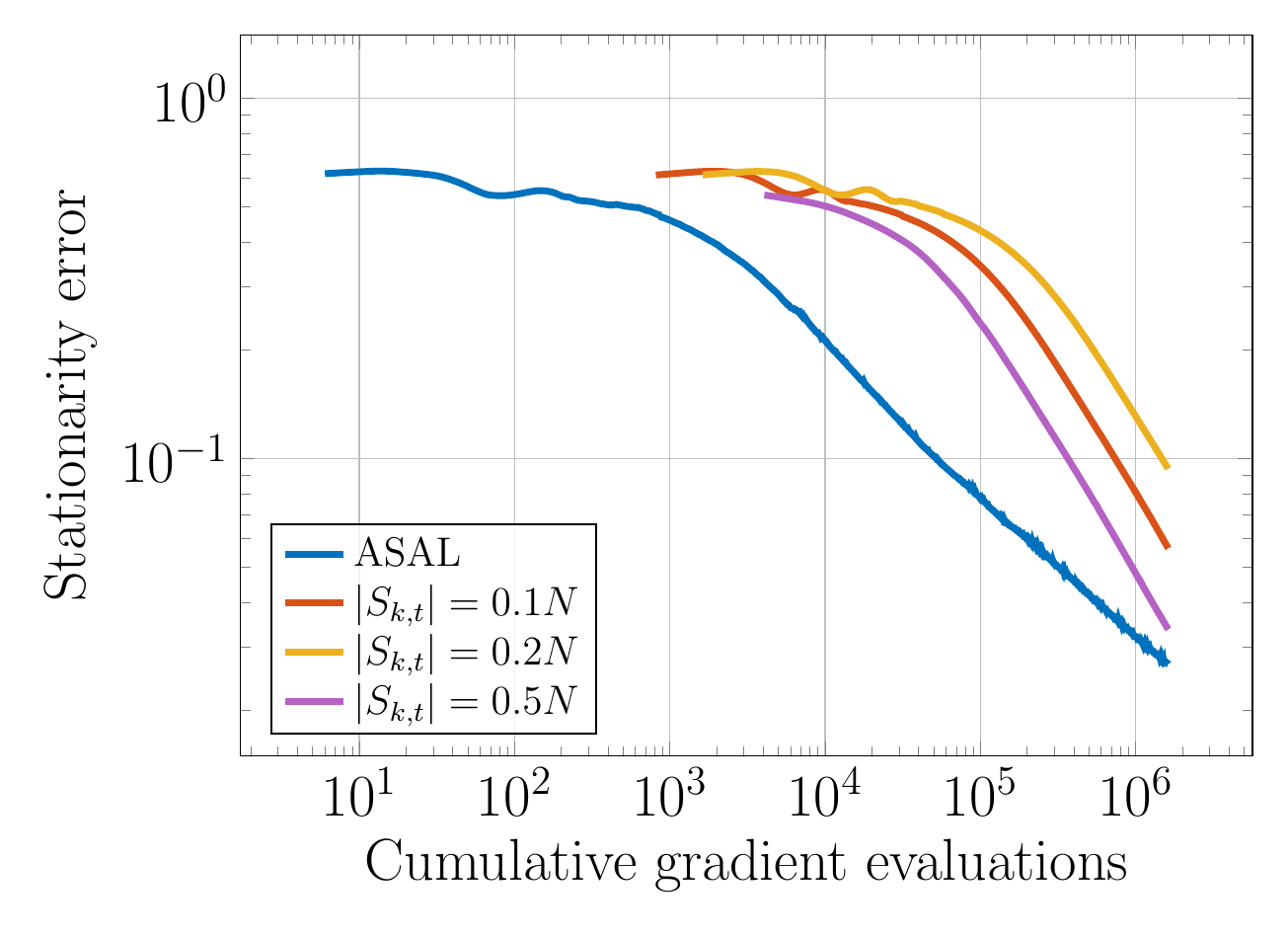}
\includegraphics[height=3.05cm]{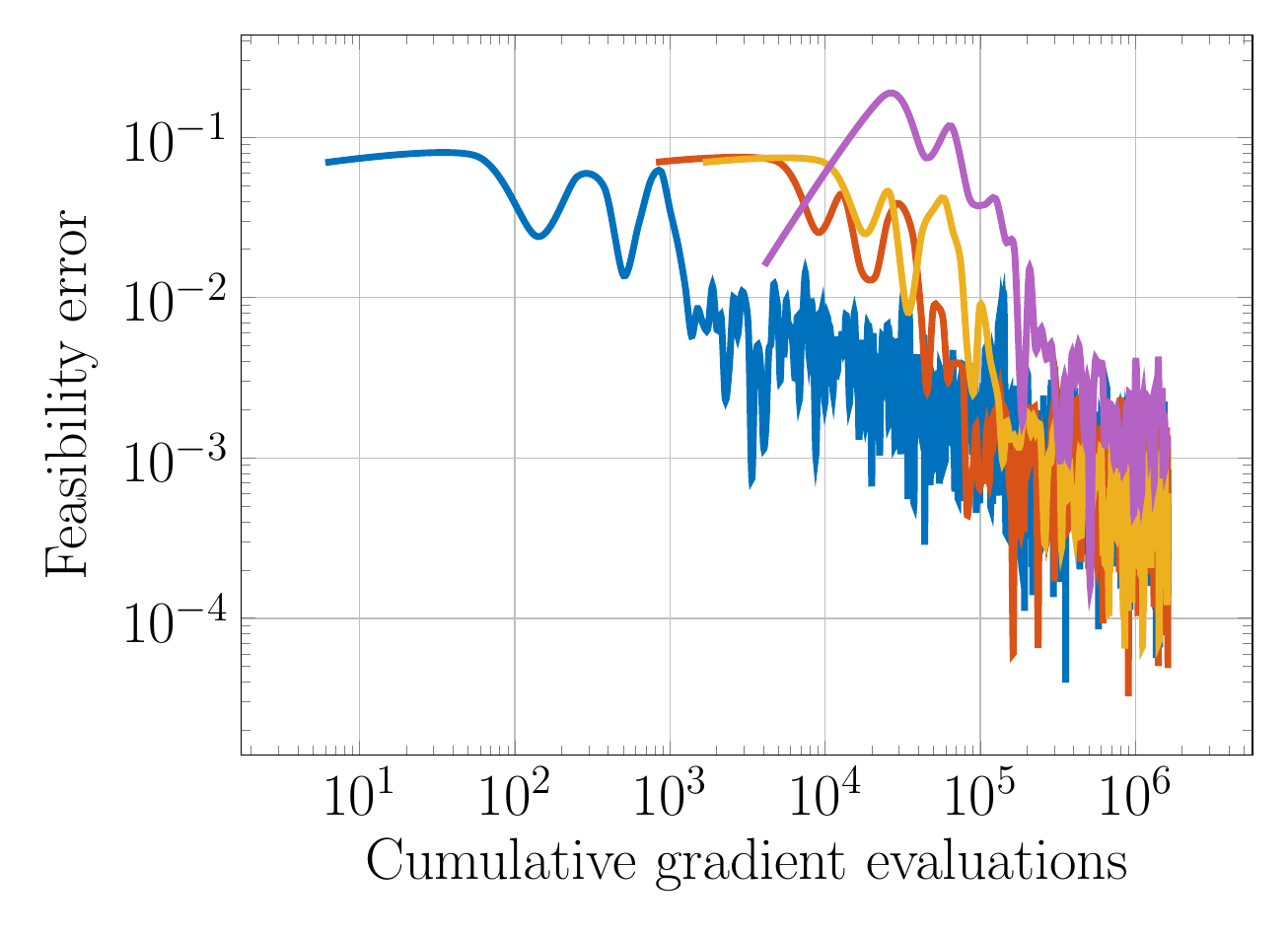}
\includegraphics[height=3.05cm]{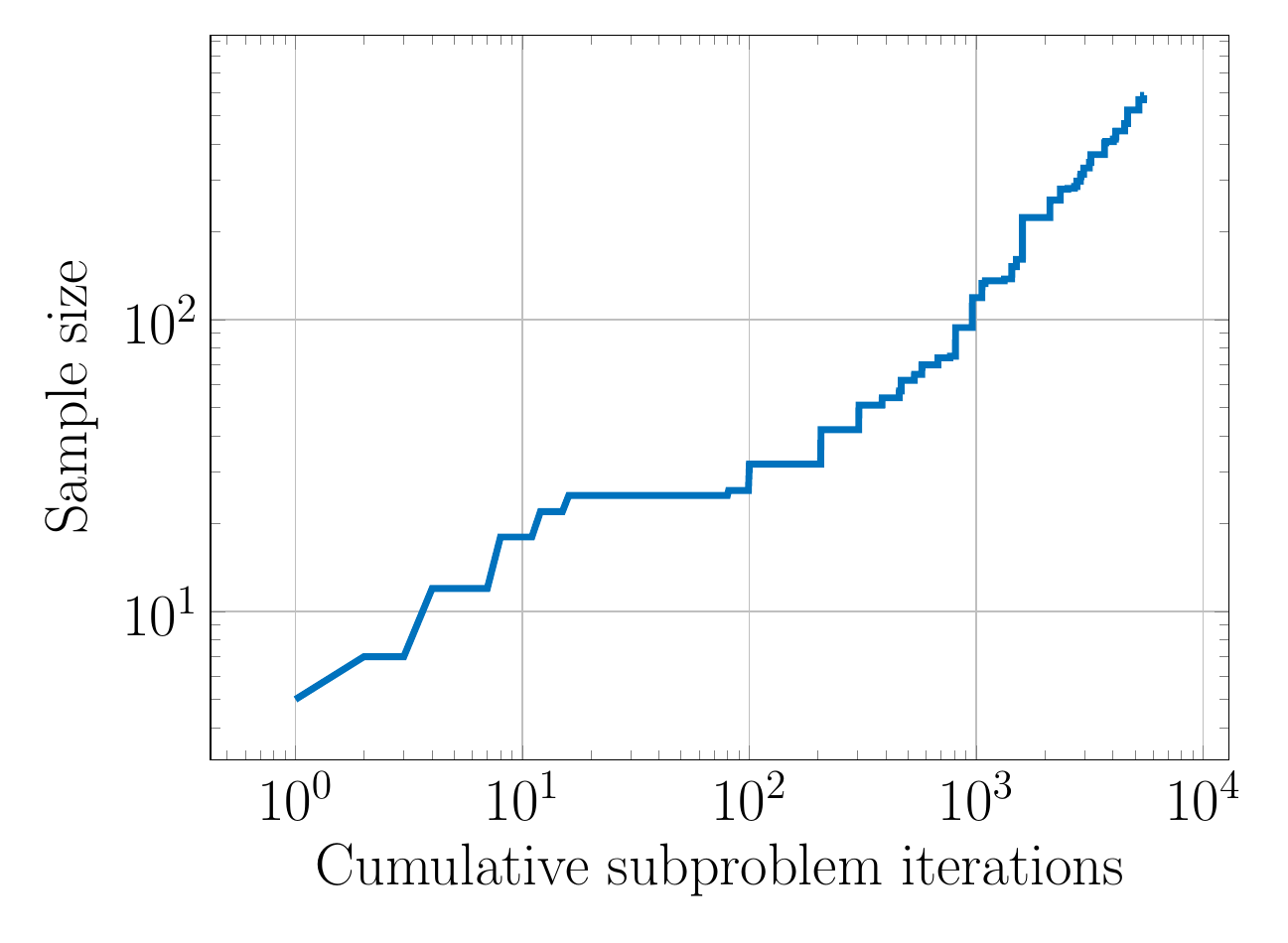}
\caption{
    Results of running \Cref{alg:ASAL} on the constrained logistic regression problem~\cref{eq:log_reg_prob} with the \texttt{mushroom} classification data set.
    Notice that ASAL achieves the lowest average stationarity errors while matching the minimal feasibility error of the three baseline algorithms.
    To generate these results, we used the algorithm parameters  $\theta_g= 0.99$ and individually tuned $\alpha$, $\eta$, and $\tau_0$.
\label{fig:log_reg_plots}}
\end{figure}

\begin{remark}
\label{rem:SampleSizeDecrease}
Notice from \Cref{fig:log_reg_plots} that the ASAL sample size never decreases.
This is despite the safeguarding mechanism in \texttt{line 17} of \Cref{alg:ASAL}.
We have witnessed this non-decreasing sample size property in all of our experiments with ASAL after tuning the hyperparameters $\alpha$, $\eta$, and $\tau_0$.
Thus, we see little justification for allowing sample size decreases in future implementations of ASAL and do not report the hyperparameters $\nu_{\mathrm{l}}$, $s_{\mathrm{l}}$, and $s_{\min}$ in the remaining experiments.
\end{remark}

\begin{remark}
    The starting values for the cumulative gradient evaluations in \Cref{fig:log_reg_plots} represent the fact that we are recording errors only \emph{after} advancing a single optimization step.
    Each algorithm began with the same initial guesses $x_{-1}$ and $\lambda_0$.
\end{remark}

\begin{remark}
    Observe that the expected feasibility error with ASAL steadily decreases in \Cref{fig:log_reg_plots}.
    Meanwhile, the feasibility error in each of the other algorithms plateaus after around $5\times 10^5$ cumulative gradient evaluations.
    This is due to the stable sample size growth provided by our adaptive sampling strategy and the fact that a fixed number of samples are used for each of the baselines; i.e., the baseline algorithms can only converge in expectation to a neighborhood of the solution.
    As a result, even though the slopes of the stationarity errors for the baseline algorithms are higher than ASAL after 200 epochs, we conclude that ASAL would remain the better practical algorithm even if a larger epoch threshold had been used.
\end{remark}

\begin{remark}
The tuning procedure used in this experiment is expensive and impractical for more expensive problems.
Owing to this fact, in the remaining sections, we only compare ASAL to baseline algorithms with a shared set of hyperparameters.
\end{remark}

\subsection{Optimal truss design} \label{sub:optimal_truss_design}

We consider optimizing the simply supported truss structure shown in \Cref{fig:truss} in a problem inspired by an example presented in \cite{rockafellar2010buffered}. The truss elements are numbered as shown in \autoref{fig:truss}, and a random force $F$, pointing downwards, is applied in the middle of the bottom chord. The cross-sections of the truss elements are denoted by $x^i,\,\,i=1,2,\dots,7$, and the yield stress associated with the members with $\sigma_i,\,\,i=1,2,\dots,7$. The first two yield stress limits $\sigma_i,\,\, i=1,2$, are log-normal random variables with mean $100\rm{N}/\rm{mm}^2$ and standard deviation $20\rm{N}/\rm{mm}^2$. The yield stresses for all other members are also log-normal, but with mean $200\rm{N}/\rm{mm}^2$ and standard deviation $40\rm{N}/\rm{mm}^2$. The correlation coefficient between $\sigma_1$ and $\sigma_2$ is 0.8, and between $\sigma_{i}$, $i=1,2$, and $\sigma_j,\,\,j=3,4,5,6,7$, the correlation coefficients are each 0.5. The correlation coefficients between each $\sigma_i\neq\sigma_j$, $i,j\in\left\{3,4,5,6,7\right\}$, are set to 0.8. The applied force $f$ is independent of the yield stresses and is distributed log-normally with mean $1000\rm{kN}$ and standard deviation $400\rm{kN}$. The structure will fail if any member exceeds the associated yield stress, i.e., for each member, we can define the following random limit state function:
\begin{equation}
	g_i(x;f,\bm\sigma)
	=
	\frac{f}{c_i x^i}-\sigma_{i},
	\quad i\in\{1,2,\ldots,7\},
	\end{equation}
where the fixed parameters $c_i$ depend on the geometry and the loads. For this structure, $c_{\{i=1,2\}}=1/\left(2\sqrt{3}\right)$ and $c_{\{i=3,\dots,7\}}=1/\sqrt{3}$.

\begin{figure}
\centering
\includegraphics[height=5cm]{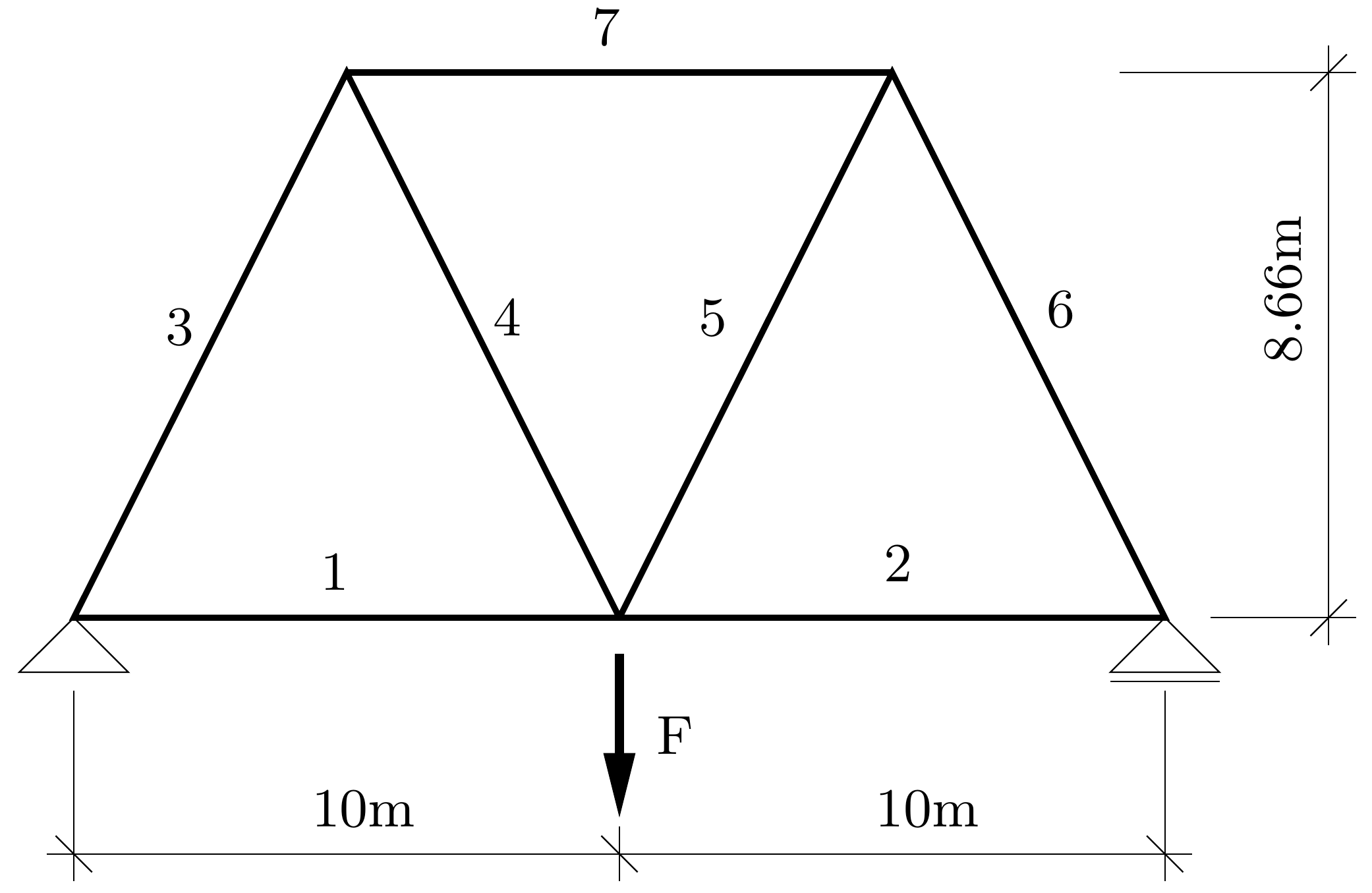}
\caption{Definition of the geometry and the load applied to the truss.}
\label{fig:truss}
\end{figure}

We pose the following stochastic optimization problem:
\begin{equation}
	\begin{alignedat}{3}
		&\mathrm{minimize}\quad
				&&
		\frac{1}{7\alpha}
		\mathbb{E}
				\bigg[
			\ln\bigg(\sum_{i=1}^7 \exp(\alpha g_i(x))\bigg)
		\bigg]
		,
		\\
		&\text{subject to }~
		&&A\leq x\leq B,\quad \langle 1, x \rangle \leq C,
			\end{alignedat}
	\label{eq:truss}
\end{equation}
where $\alpha = 1$, $A = 1\times 10^4 \rm{mm}^2$, $B = 5\times 10^4 \rm{mm}^2$, and $C = 15\times 10^4\rm{mm}^2$ are user-defined parameters.
The components of the optimal solution are estimated to be
\begin{equation}
    x_{\{i=1,2\}} = 4.342 \times 10^4 \rm{mm}^2
    \quad
    \text{and}
    \quad
    x_{\{i=3,\dots,7\}} = 1.263 \times 10^4 \rm{mm}^2
    \,.
\end{equation}
To solve this problem, we use ASAL with $\theta_g = 0.99$ and compare its performance to the stochastic augmented Lagrangian method with fixed sample sizes under a 1 million cumulative sample budget.
In each experiment, we use the penalty and step size values $\alpha = 0.01$ and $\eta = 1.0$.
\Cref{fig:truss_plots} documents our findings.
Notice that, even though it used less than 25\% of the total iterations, the stationarity and feasibility errors from ASAL (248 iterations) are significantly lower after the sample budget expires than the best-performing fixed sample size algorithm (1000 iterations).

\begin{figure}
\centering
\includegraphics[height=3.05cm]{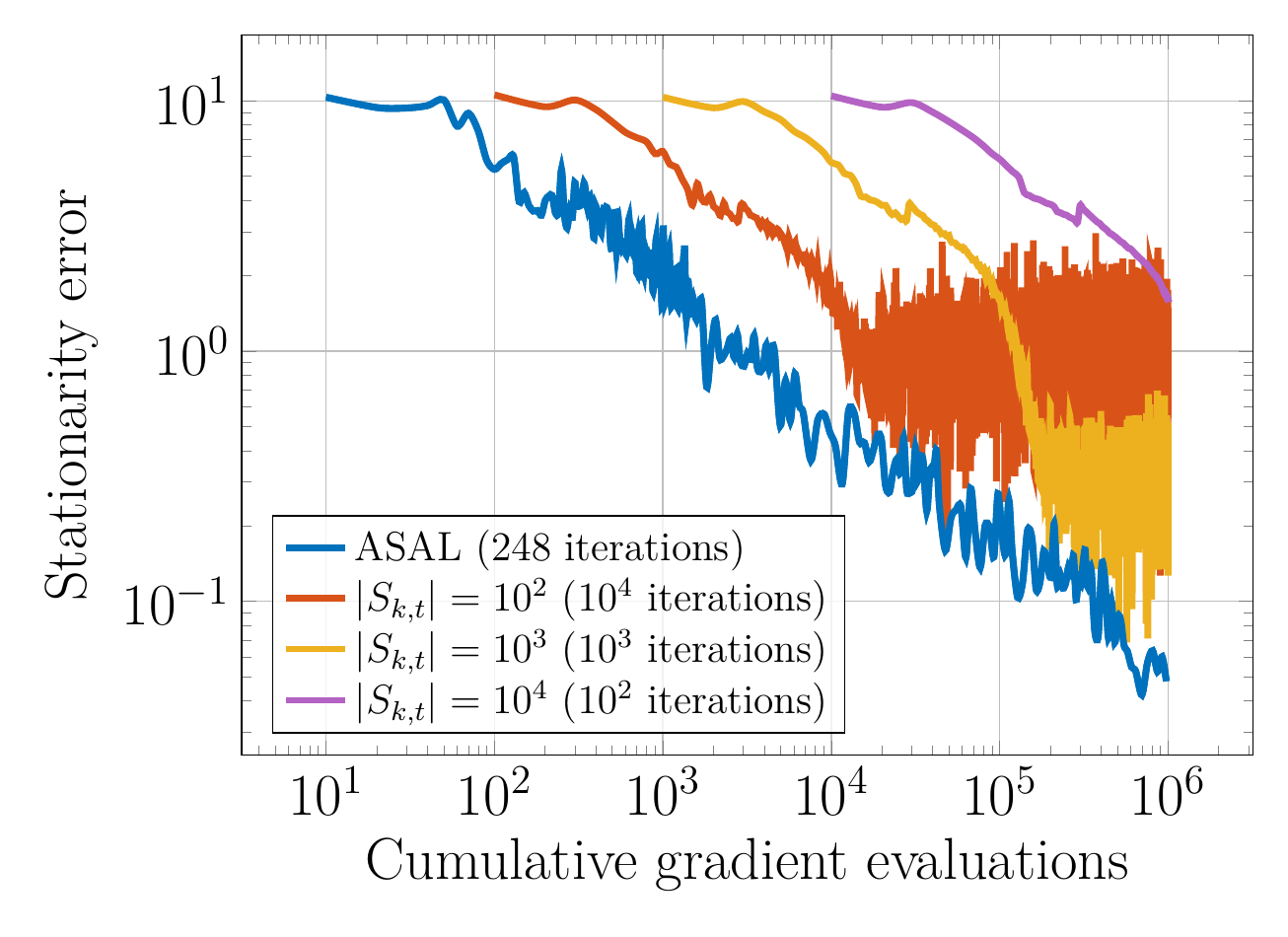}
\includegraphics[height=3.05cm]{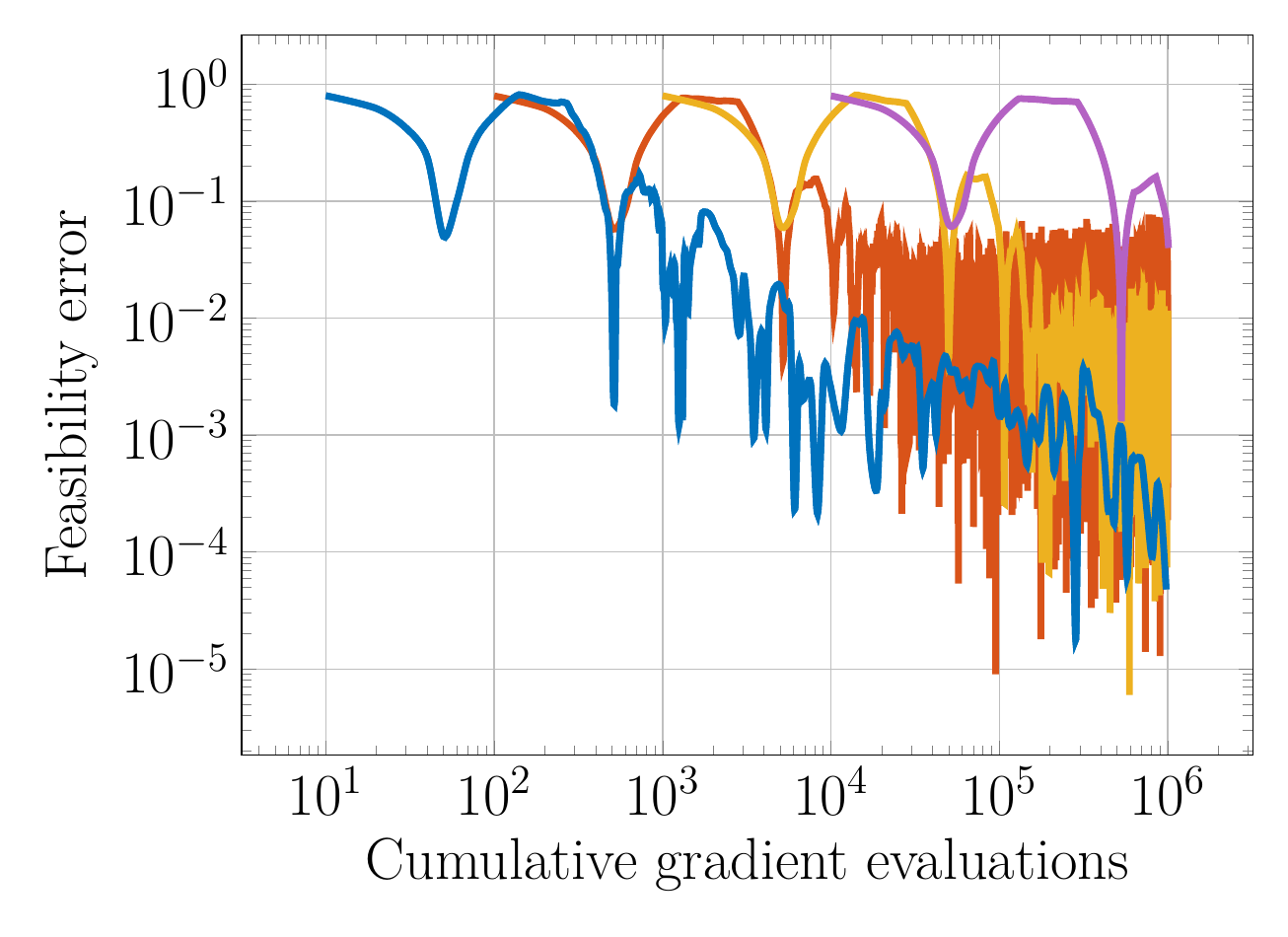}
\includegraphics[height=3.05cm]{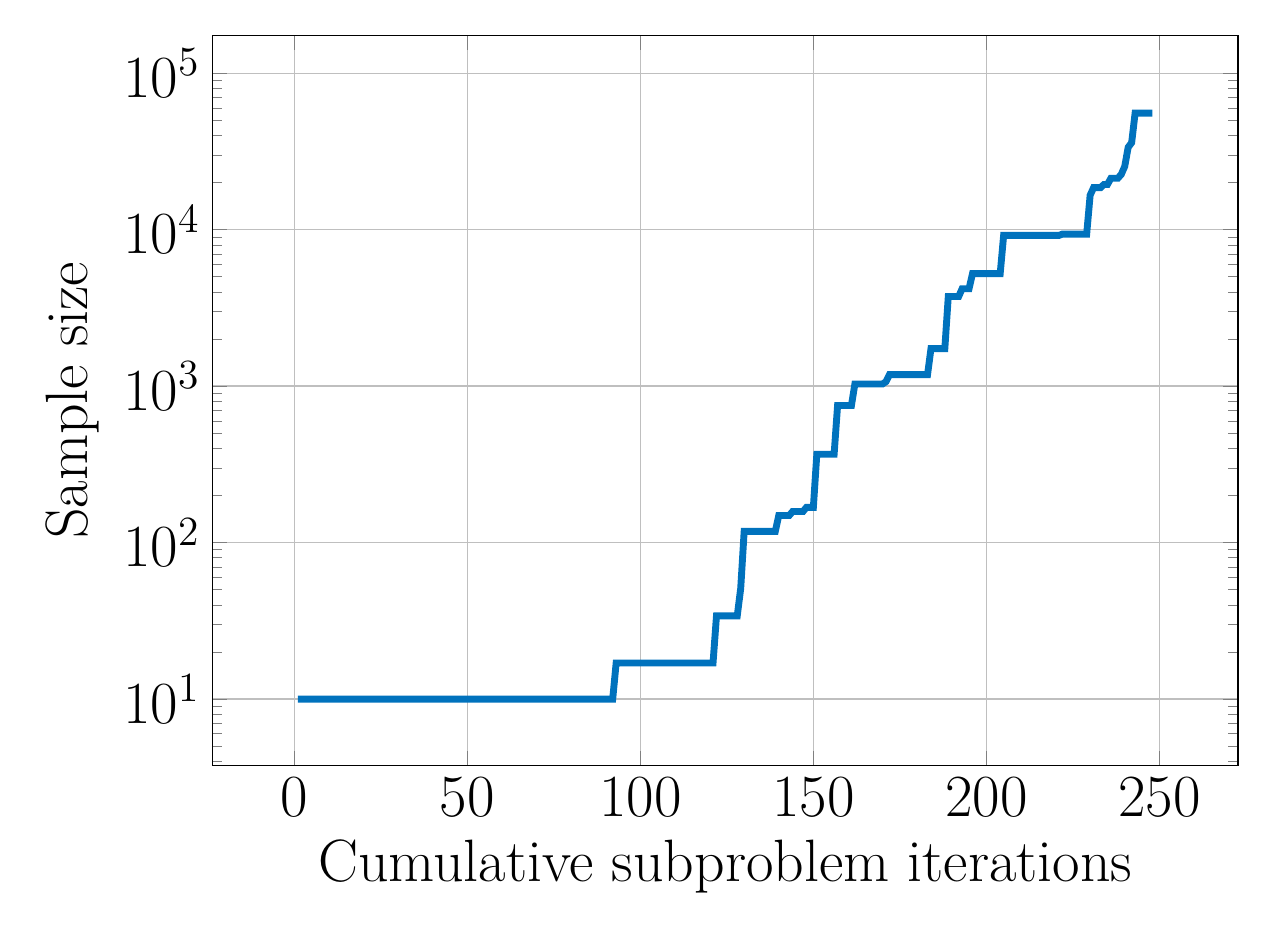}
\caption{
	Results of running \Cref{alg:ASAL} on the truss optimization problem~\cref{eq:truss}.
	Notice that ASAL achieves the lowest average stationary and feasibility errors while simultaneously requiring the smallest number of iterations.
	To generate these results, we used the algorithm parameters $\theta_g = 0.99$, $\alpha = 0.01$, $\eta = 1.0$, and primal tolerance sequence $\tau_k = 10/k$.
	\label{fig:truss_plots}}
\end{figure}

\subsection{Optimal design of a heat sink} \label{sub:topology_optimization}

We close with a non-convex optimization problem of engineering interest.
In this final experiment, we consider the optimal design of a heat sink within a hypothetical square domain $\Omega = (0,1)^2$ with a stochastic heat source $f(\bm{x})$, $\bm{x} \in \Omega$, described by a spatial Gaussian random field with M\'atern covariance.
More specifically, we follow \cite{lindgren2011explicit,lindgren2022spde} and define
\begin{equation}
\label{eq:random_heat_source}
    -\kappa^2\Delta f + f = \mathcal{W}
    ~~\text{in }\Omega
    \,,
    \quad
    \nabla f \cdot \bm{n} = 0
    ~~\text{on }\partial\Omega
    \,,
\end{equation}
where $\mathcal{W}$ is spatial additive white Gaussian noise, $\kappa > 0$ is a correlation length parameter, and $\bm{n}$ denotes the outward-facing unit normal vector field on $\partial\Omega$.
M\'atern random fields can be used to model various random spatial phenomena \cite{khristenko2020statistical,keith2021fractional,lindgren2022spde}, which makes them reasonable for modeling the heat source in this example.
\Cref{fig:random_loads} depicts three representative solutions to~\cref{eq:random_heat_source} for the reader's interest.

\begin{figure}
\centering
    \begin{minipage}[c]{0.30\linewidth}
      \includegraphics[width=\linewidth]{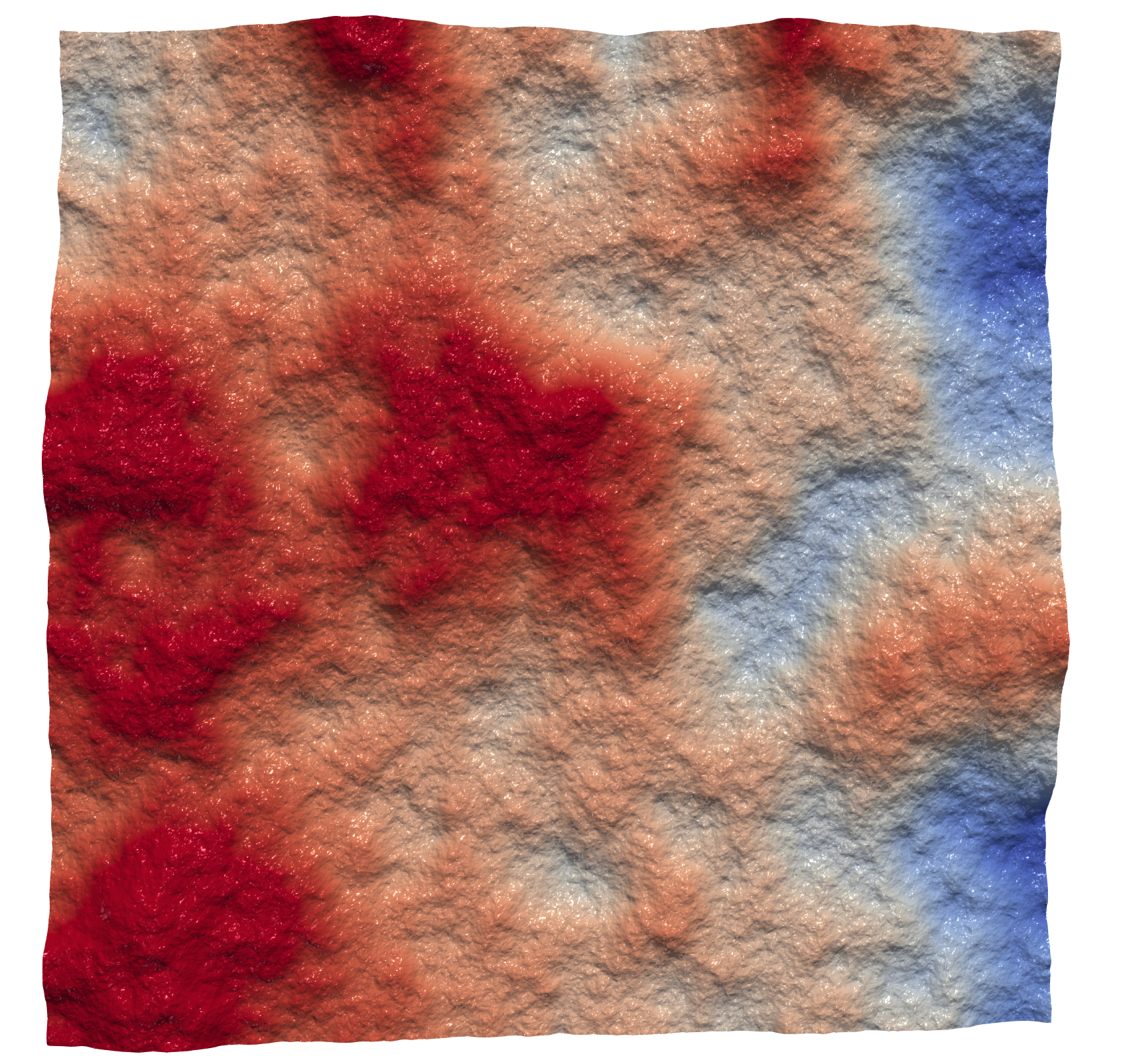}
    \end{minipage}
    \begin{minipage}[c]{0.30\linewidth}
      \includegraphics[width=\linewidth]{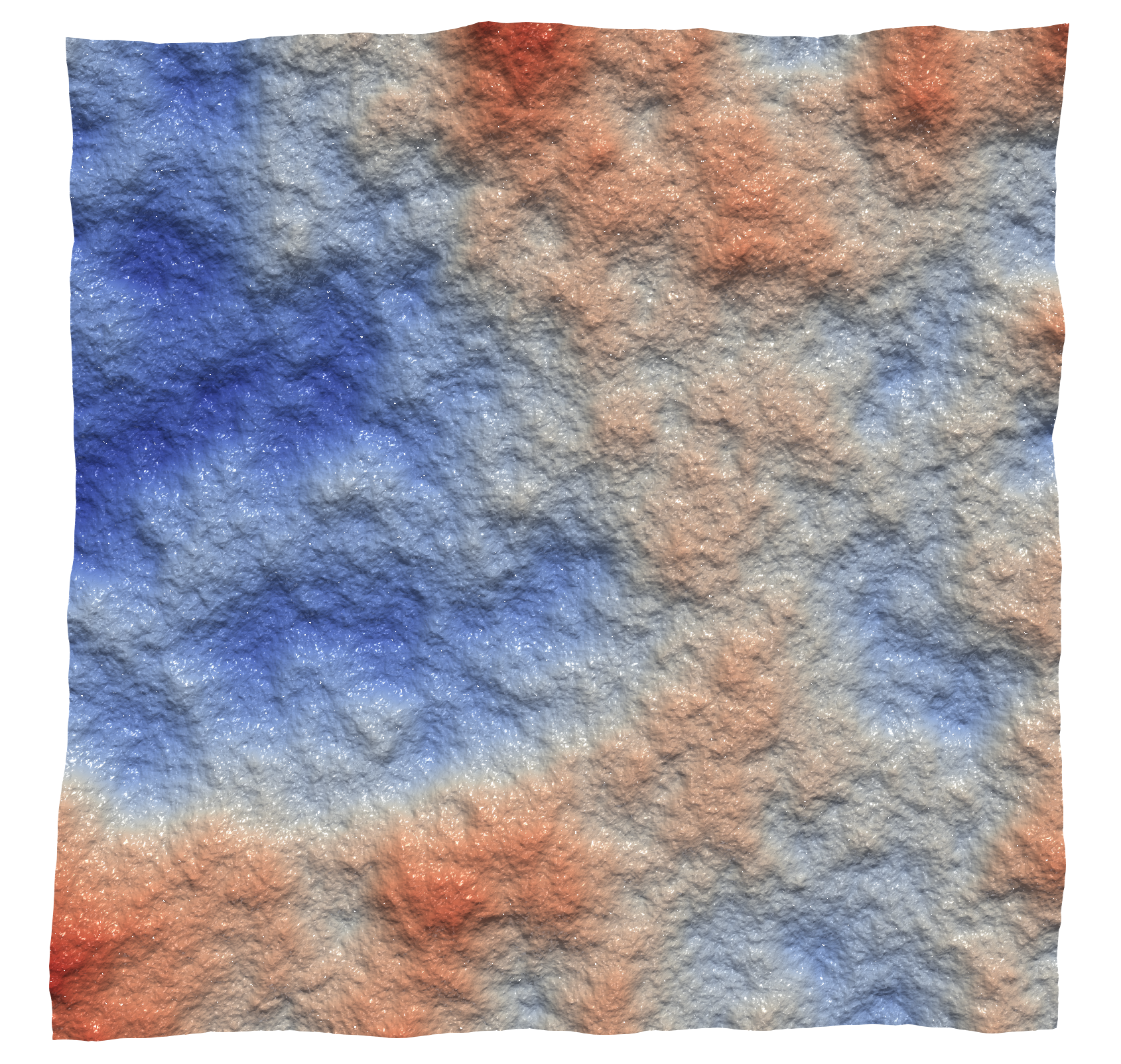}
    \end{minipage}
    \begin{minipage}[c]{0.30\linewidth}
      \includegraphics[width=\linewidth]{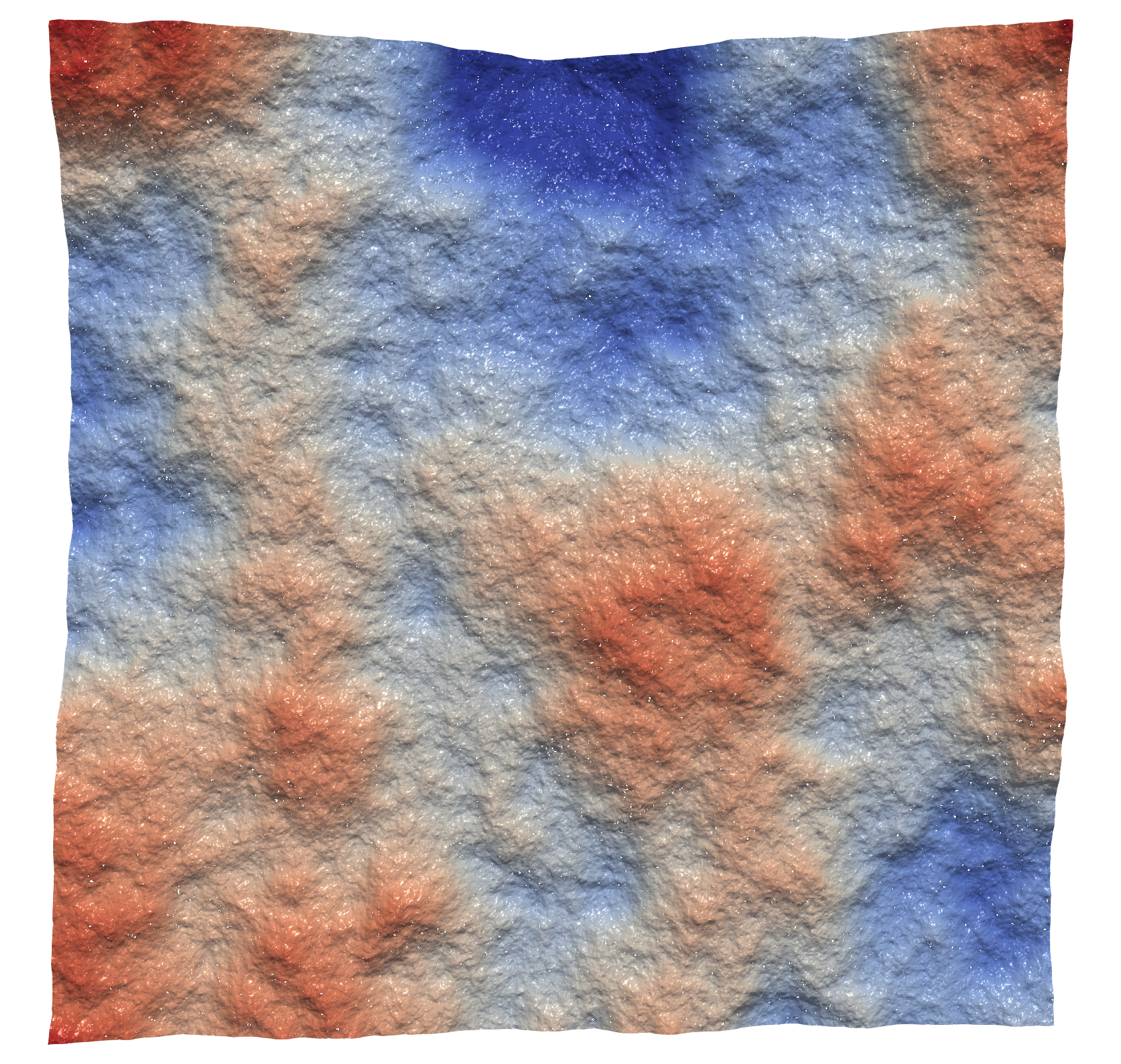}
    \end{minipage}
    \,
    \begin{minipage}[c]{0.052\linewidth}
      \includegraphics[width=\linewidth]{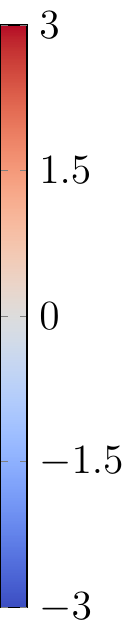}
    \end{minipage}
  \caption{Three independent realizations of the Gaussian random field $f(\bm{x})$ generated by solving~\cref{eq:random_heat_source} on the square domain $\Omega = (0,1)^2$.
  \label{fig:random_loads}
  }
\end{figure}

We use the two-field filtered density approach to topology optimization \cite[Section~3.1.2]{sigmund2013topology} to formulate the optimal heat sink design problem.
The goal is to find a material distribution $0 \leq \rho \leq 1$, where zero indicates no material, and one indicates the complete presence of material, that induces the smallest thermal compliance, $\int_{ \Omega} u f \dd \bm{x}$, in expectation.
In the aforestated expression, the temperature distribution $u$ is determined by $\rho$ and $f$ through the heat diffusion equation $-\div r(\widetilde{\rho}) \nabla u = f$, where $\widetilde{\rho}$ is a regularized (filtered) distribution function \cite{bruns2001topology,lazarov2011filters} and $r(\widetilde{\rho}) > 0$ is a thermal conductivity model.
In this work, we use the well-known (modified) solid isotropic material penalization (SIMP) model $r(\widetilde{\rho}) = \rho_{\_} + \widetilde{\rho}^3 (1-\rho_{\_})$, where $0<\rho_{\_} \ll 1$ is a nominal thermal diffusivity constant assigned to void regions in order to prevent the stiffness matrix from becoming singular \cite{andreassen2011efficient}.

\begin{subequations}
\label{eqs:thermal_compliance}
The full problem formulation is written as follows:
\begin{equation}
\label{eq:thermal_compliance_objective}
  \min_{\rho \in L^2(\Omega),\, u \in H^1(\Omega)} \ \
  \bigg\{
  \,
  \widehat{F}(\rho,u)
  :=
  \mathbb{E} \left[ \int_{ \Omega} u  f \dd \bm{x} \right]
  \,
  \bigg\}
  \, ,
\end{equation}
subject to the constraints
\begin{equation}
\label{eq:thermal_compliance_constraints}
\left\{\,\,
\begin{gathered}
  -\epsilon^2\Delta \widetilde{\rho} + \widetilde{\rho}
  = \rho
  ~~\text{in }\Omega
  \,,~~
  \nabla \widetilde{\rho}\cdot \bm{n} = 0
  ~~\text{on }\partial\Omega
  \,,
  \\
  -\div \bigl( r(\widetilde{\rho}) \nabla u \big)
  = f
  ~~\text{in }\Omega
  \,,~~
  u = 0
  ~~\text{on }\Gamma_0
  \,,~~
    \nabla u\cdot \bm{n} = 0
  ~~\text{on }\partial\Omega \setminus \Gamma_0
  \,,
  \\
  \int_\Omega \rho(\bm{x}) d\bm{x}
  \leq \gamma |\Omega|
  \,,
  ~~\text{and}~~
  0 \leq \rho
  \leq 1
  ~~\text{in }\Omega
        \,,
\end{gathered}
\right.
\end{equation}
\end{subequations}
where $0 < \gamma < 1$ is the \emph{volume fraction}, which constrains the fraction of the domain occupied by design, and $\epsilon > 0$ is a \emph{length scale} for the final design.
The boundary conditions and solution to the optimization problem~\cref{eqs:thermal_compliance} with $\rho_{\_} = 10^{-3}$, $\gamma = 0.5$, $\epsilon = 0.01$, $\kappa = 0.2$ are depicted in \Cref{fig:thermal_compliance}.

\ckrev{
To remove the PDE constraints from the optimization problem, we employ a reduced space formulation, often referred to in the literature as a nested formulation \cite{bendsoe2003topology}, which can be written as
\begin{subequations}
\begin{equation}
\label{eq:thermal_compliance_objective_reduced}
  \min_{\rho \in L^2(\Omega)} \ \
  \bigg\{
  \,
  F(\rho)
  :=
  \mathbb{E} \left[ \int_{ \Omega} u\left(\widetilde{\rho}\left(\rho\right)\right)  f \dd \bm{x} \right]
  \,
  \bigg\}
  \, ,
\end{equation}
subject to the constraints
\begin{equation}
\begin{gathered}
\label{eq:thermal_compliance_convex_constraints}
    \int_\Omega \rho(\bm{x}) d\bm{x}
  \leq \gamma |\Omega|
  \,,
  ~~\text{and}~~
  0 \leq \rho
  \leq 1
  ~~\text{in }\Omega
        \,.
\end{gathered}
\end{equation}
\end{subequations}
In this formulation, it is understood that the temperature field $u = u\left(\widetilde{\rho}\left(\rho\right)\right)$ solves the state equation
\begin{equation}
\begin{gathered}
    -\div \bigl( r(\widetilde{\rho}) \nabla u \big)
  = f
  ~~\text{in }\Omega
  \,,~~
  u = 0
  ~~\text{on }\Gamma_0
  \,,~~
  \nabla u\cdot \bm{n} = 0
  ~~\text{on }\partial\Omega \setminus \Gamma_0
  \,,
  \end{gathered}
\end{equation}
and the filtered density $\widetilde{\rho} = \widetilde{\rho}\left(\rho\right)$ solves the screened Poisson equation,
\begin{equation}
    -\epsilon^2\Delta \widetilde{\rho} + \widetilde{\rho}
  = \rho
  ~~\text{in }\Omega
  \,,~~
  \nabla \widetilde{\rho}\cdot \bm{n} = 0
  ~~\text{on }\partial\Omega
  \,.
\end{equation}
Since the inequality constraint in~\cref{eq:thermal_compliance_convex_constraints} is always active, it is replaced by an equality constraint that our ASAL algorithm can handle. The gradients of the reduced objective function in~\cref{eq:thermal_compliance_objective_reduced} are computed with FEM-discretized representations of the temperature $u$ and filtered density $\widetilde{\rho}$ using standard adjoint analysis techniques \cite{bendsoe2003topology}.
Finally, $L^2(\Omega)$ projections are used to enforce the box constraints found in~\cref{eq:thermal_compliance_convex_constraints}.
}

For comparison, \Cref{fig:thermal_compliance} also depicts a reference solution to~\cref{eqs:thermal_compliance} corresponding to the ({deterministic}) uniform heat field $f \equiv 1$. Close examination reveals significant differences between the designs with deterministic and stochastic inputs. The deterministic case results in an organic tree-like structure that aims to transfer the heat generated at any point in the computational domain using the shortest possible way to the Dirichlet boundary with zero temperature. The design does not depend on the magnitude of the heat source, and any constant input will result in the same material distribution if the initial material distribution is in the vicinity of the local solution. 
On the other hand, due to the oscillatory nature of the stochastic input, the heat source term can take positive and negative values. Such input distribution allows the optimization process to balance the heat transfer locally without linking the local subdomain directly to the boundary with a fixed temperature. Thus, the role of the closed loops of material appearing in the design with stochastic input is to establish a local heat equilibrium. In this case, the global tree-like structure transfers only the excess heat, which cannot be balanced locally. 

\begin{figure}
\centering
\raisebox{3.75pt}[0pt][0pt]{  \includegraphics[width=0.28\linewidth]{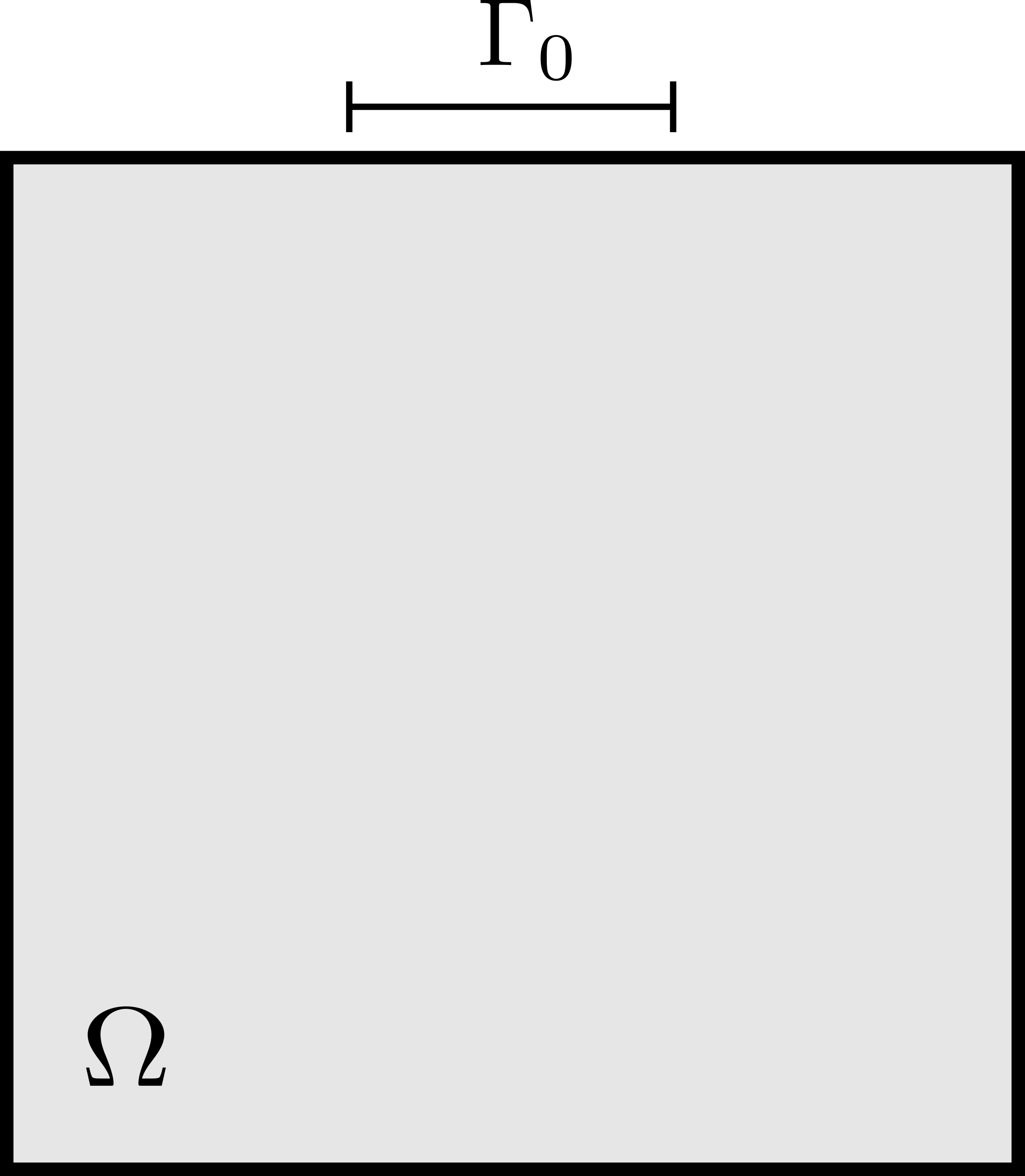}
}
  \quad
  \includegraphics[width=0.3\linewidth]{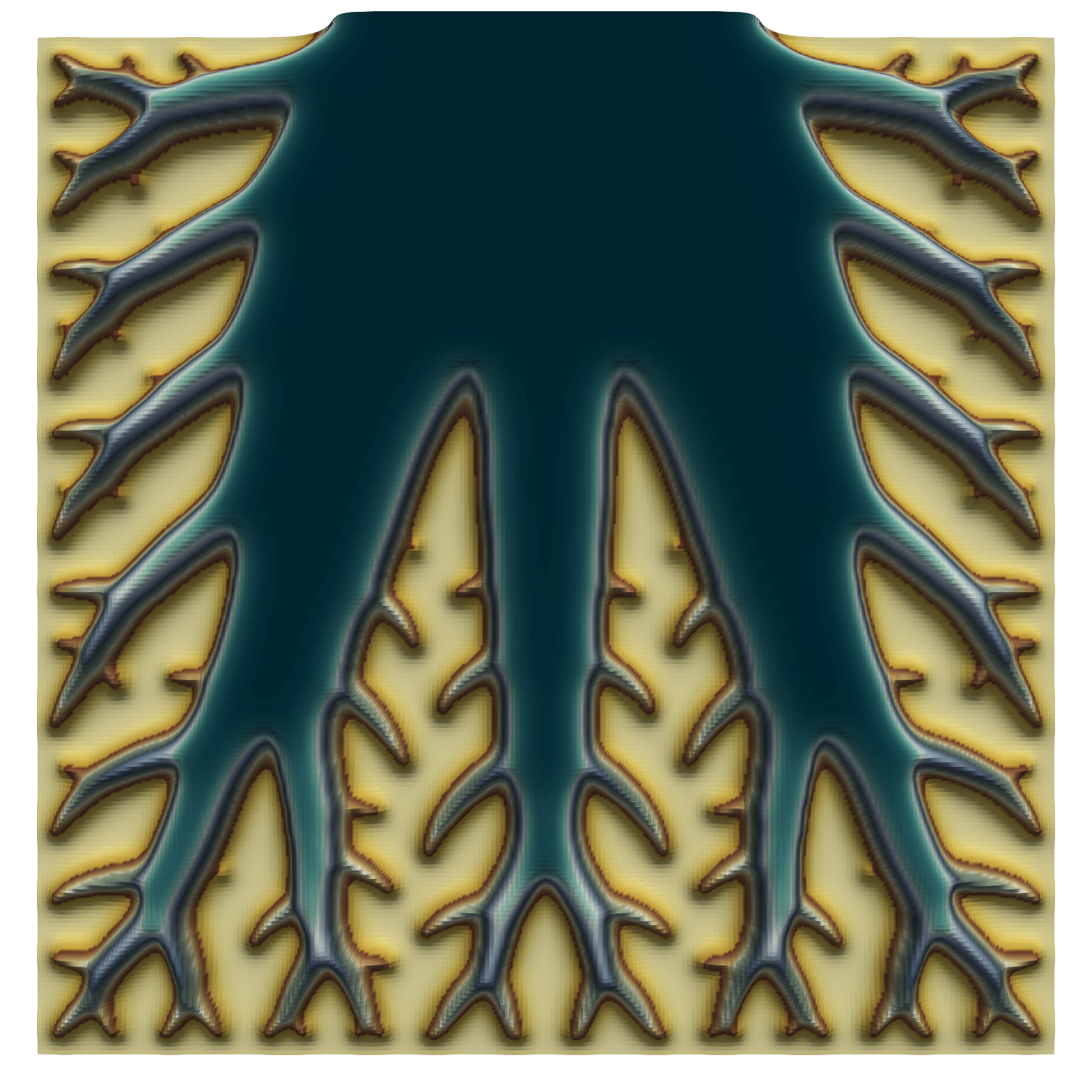}
    \quad
  \includegraphics[width=0.3\linewidth]{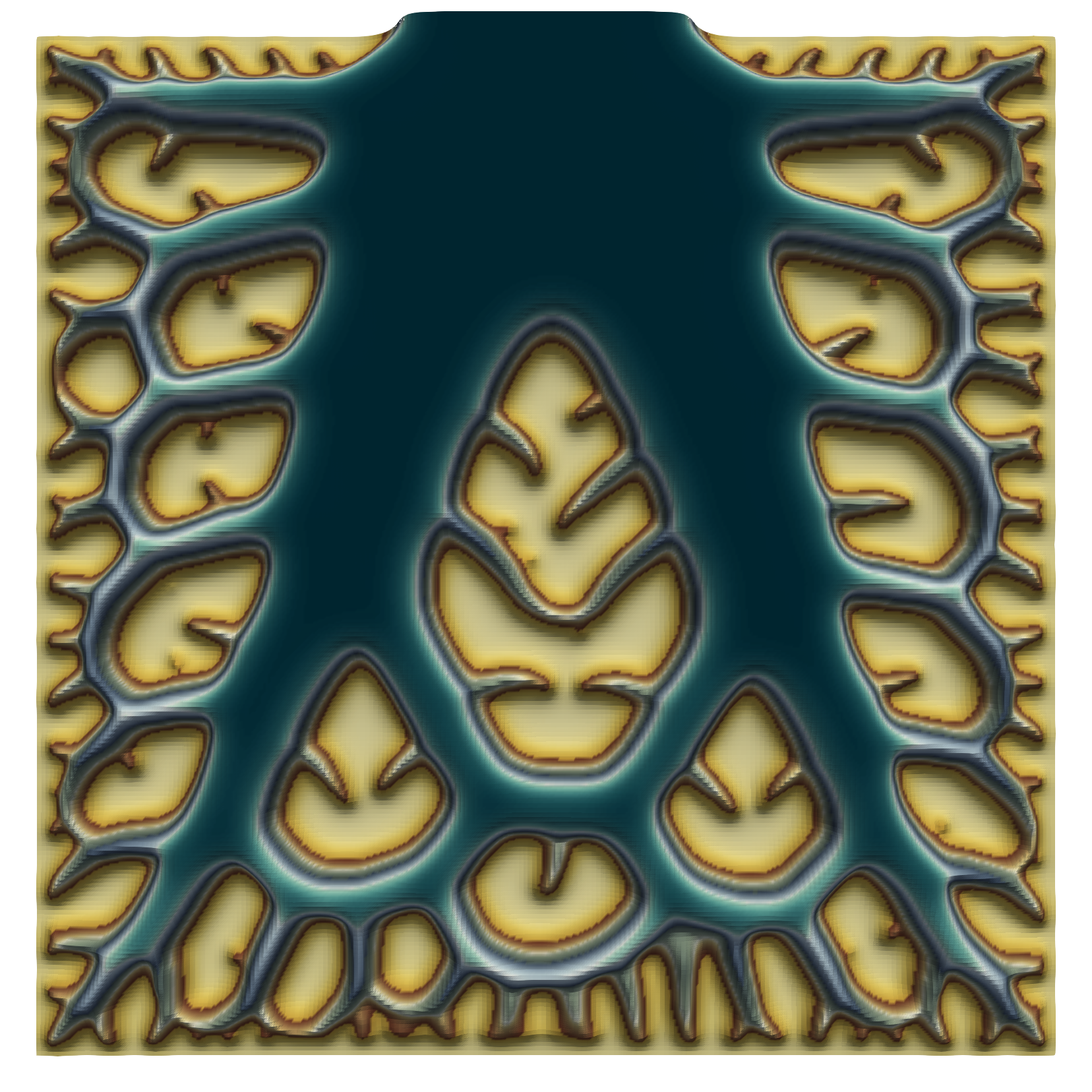}
  \caption{Left: Depiction of the subsets of the domain boundary $\partial\Omega$ for the boundary conditions for the heat field $u$ in~\cref{eq:random_heat_source}. We define $u = 0$ on $\Gamma_0$ and $\nabla u\cdot \bm{n} = 0$ on $\partial\Omega \setminus \Gamma_0$.
  Middle: Reference density field $\tilde{\rho}$ for the solution of the \emph{deterministic} thermal compliance optimization problem~\cref{eqs:thermal_compliance} with $f \equiv 1$ everywhere in $\Omega$.
  Right: The filtered density $\widetilde{\rho}$ for the solution of the \emph{expected value} thermal compliance optimization problem~\cref{eqs:thermal_compliance} with $f$ given by~\cref{eq:random_heat_source}.
  The presence of closed-loop branches in the optimal solution on the right indicates a preference for balancing the heat locally and transferring only the excess unbalanced heat to the external environment through $\Gamma_0$.
  \label{fig:thermal_compliance}
  }
\end{figure}

In this experiment, we use ASAL with $\theta_g = 2$ and compare its performance to stochastic augmented Lagrangian with fixed sample sizes, $|S_{k,t}| = 10^j$, $j=1,2,3$, under a $10^5$ cumulative sample budget.
In each execution, we use the step size values $\alpha = 0.1$ and $\eta = 2.0$.
\Cref{fig:topology_plots} documents our findings.
ASAL achieves the lowest combined average stationary and feasibility errors while requiring less than 20\% of the iterations of the best-performing fixed sample size run ($|S_{k,t}| = 10^2$).
Although the average feasibility errors with ASAL and this fixed sample size run are similar, the variance of the fixed sample size run is much greater.
Finally, the average stationarity error for the best-performing fixed sample size run is significantly larger than the average stationarity error with ASAL.

\begin{figure}
\centering
\includegraphics[height=3.05cm]{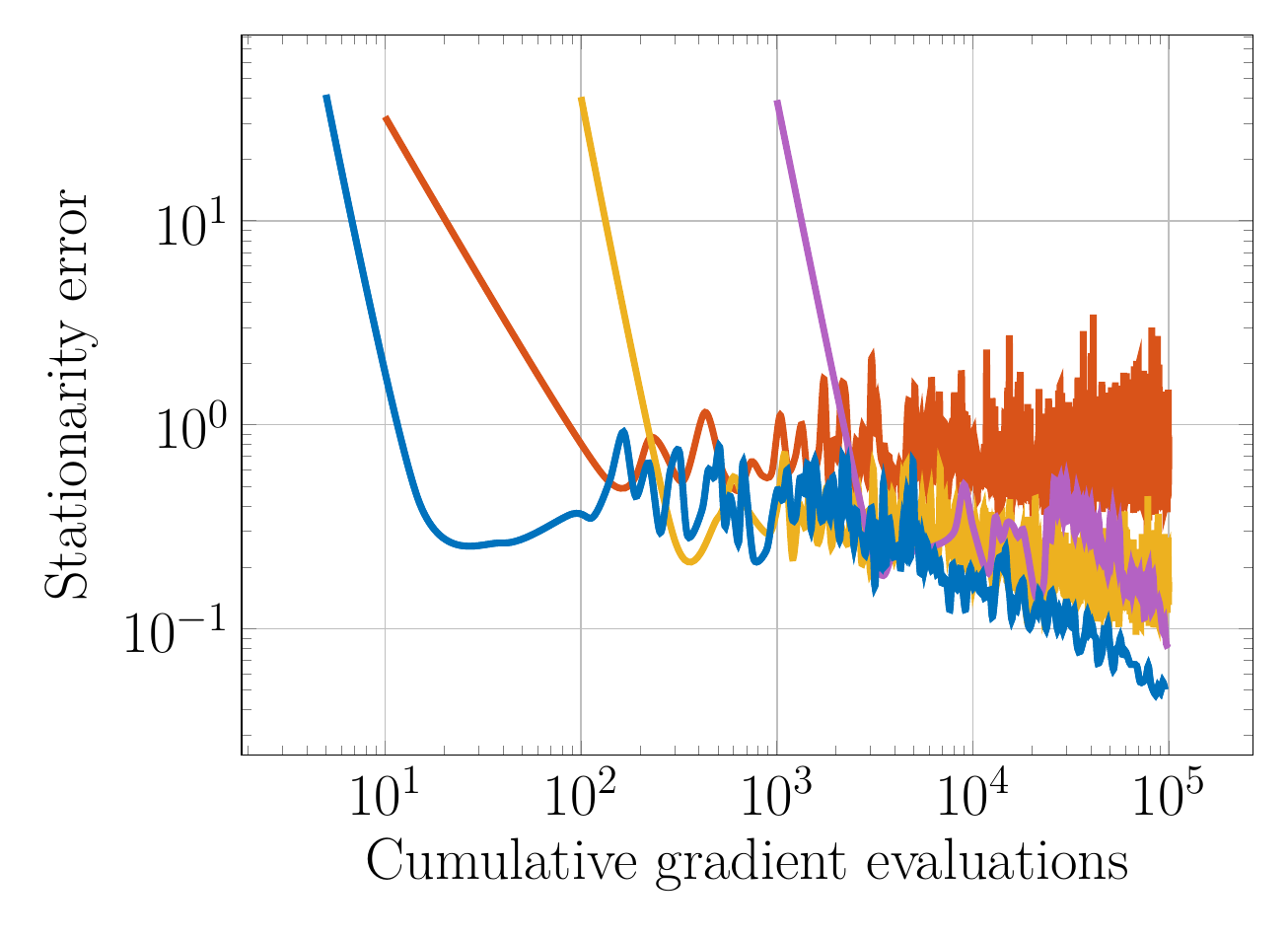}
\includegraphics[height=3.05cm]{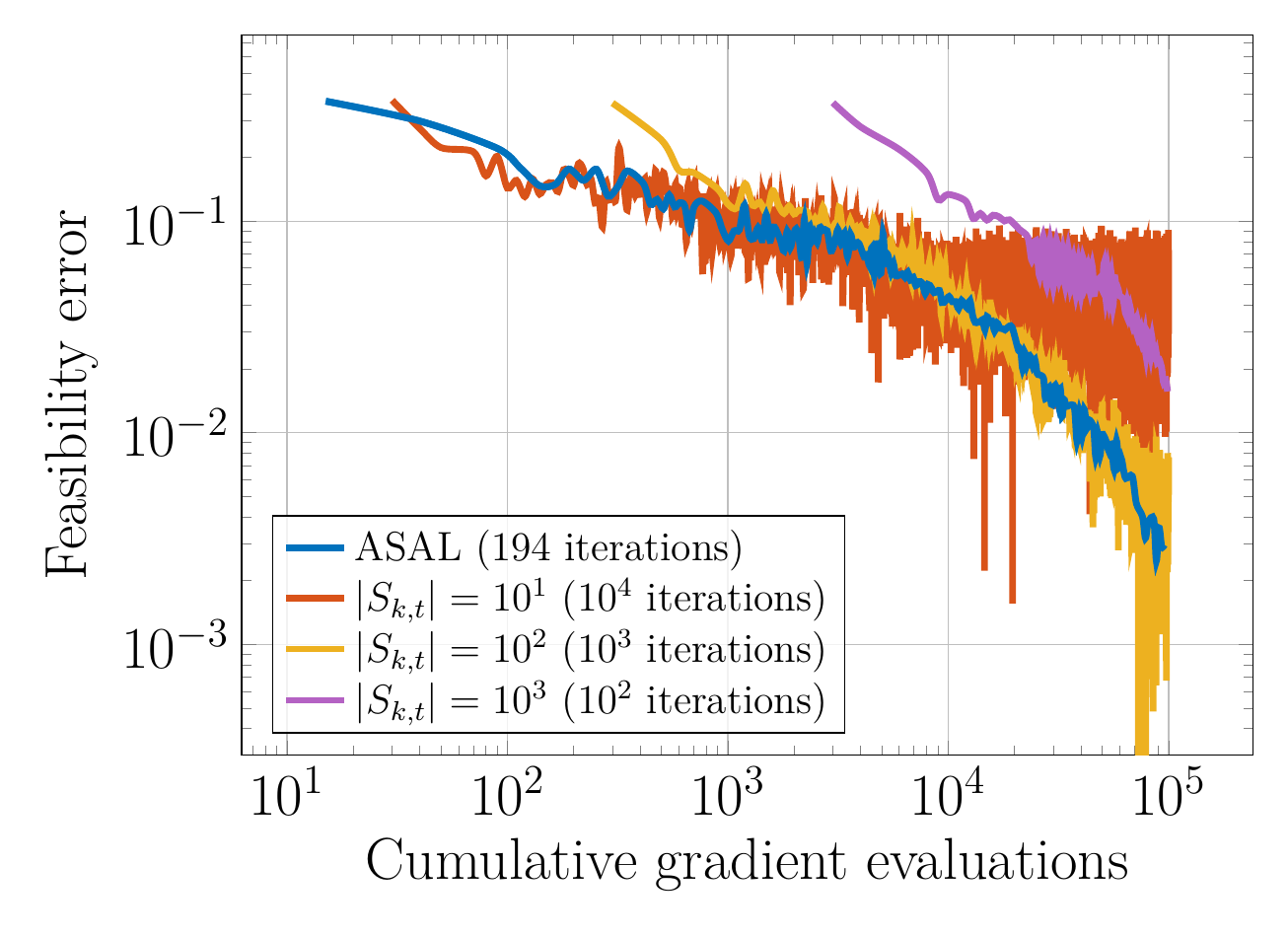}
\includegraphics[height=3.05cm]{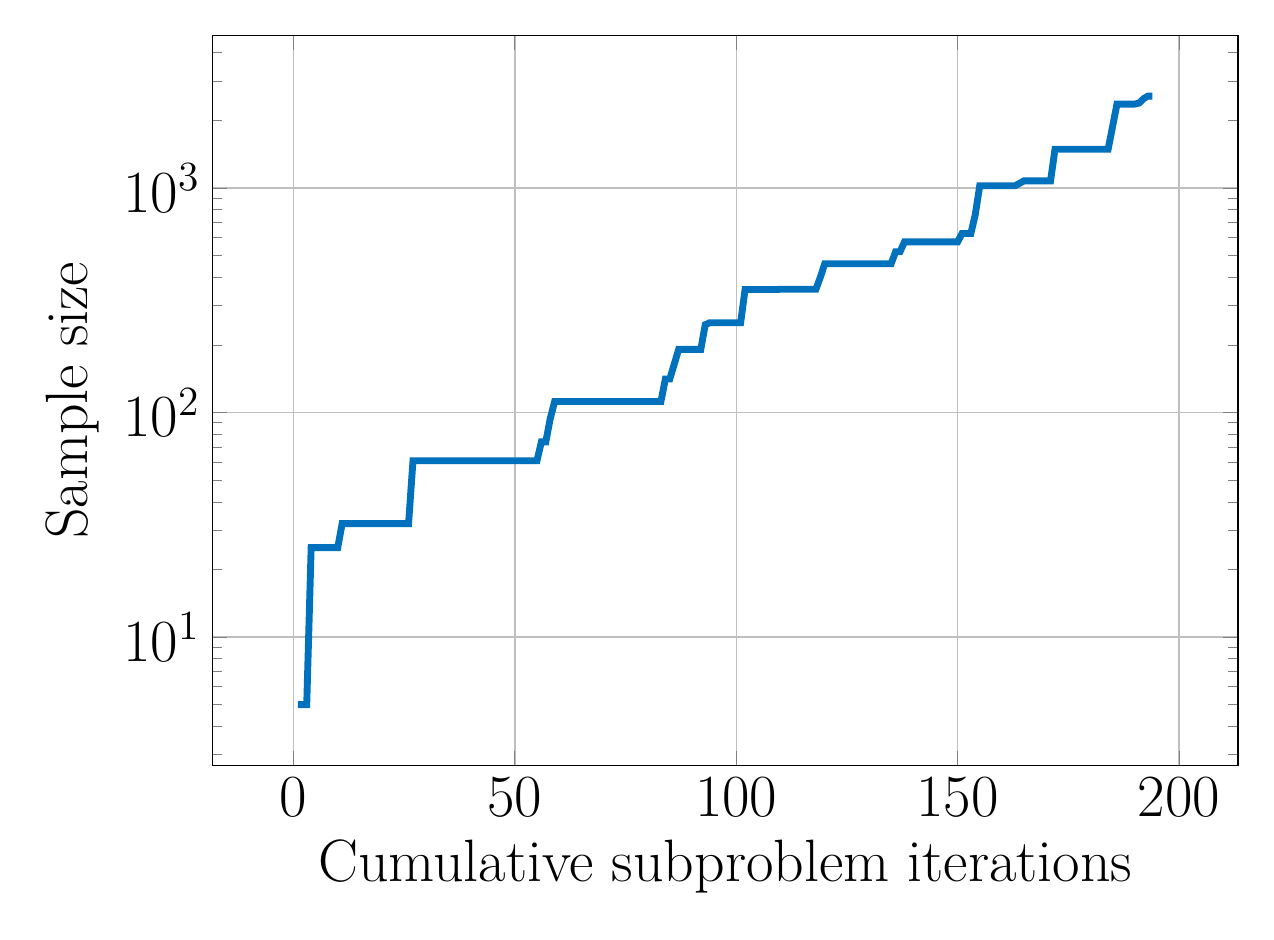}
\caption{
	Results of running ASAL (\Cref{alg:ASAL}) on the topology optimization problem~\cref{eqs:thermal_compliance}.
    Notice that ASAL achieves the lowest combined average stationary and feasibility errors while simultaneously requiring the smallest number of iterations.
    Indeed, the average stationarity error for the best-performing fixed sample size run, $|S_{k,t}| = 10^j$, $j=1,2,3$, ends up larger than the average stationarity error with ASAL.
    Moreover, although the average feasibility errors of ASAL and the best-performing fixed sample size run are similar, the variance for the fixed sample size run is much greater.
	To generate these results, we used the algorithm parameters $\theta_g = 2$, $\alpha = 0.1$, $\eta = 2.0$, and primal tolerance sequence $\tau_k = 1/k$.
	\label{fig:topology_plots}}
\end{figure}

\section{Final Remarks}\label{sec:final_remarks}

Motivated by a growing interest in developing optimization algorithms for constrained stochastic optimization problems, we introduced a framework that combines augmented Lagrangian methods with adaptive sampling techniques. In our framework, we employed stochastic solvers for the subproblems and imposed stochastic tolerance criteria for the inexact solutions. We analyzed various theoretical tolerance conditions and designed a practical test. To establish convergence results, we first showed that our framework is equivalent to an inexact gradient descent algorithm on the Moreau envelope. Second, we showed sublinear convergence in the outer iterations when $f$ is convex and linear convergence when $f$ is strongly convex with $\X = \R^n$. We also analyzed the worst-case expected work complexity of our approach in terms of the number of gradient evaluations required to obtain an $\epsilon$-accurate solution. For convex $f$ and compact $\X$, we showed $\mathcal{O}(\epsilon^{-3-\delta})$ complexity where $\delta > 0$ is a user-defined parameter. This result improves to $\mathcal{O}(\epsilon^{-2})$ when the penalty parameter $\alpha = \mathcal{O}(\epsilon^{-1})$. If $f$ is strongly convex and $\X = \R^n$, we proved $\mathcal{O}(\epsilon^{-1}\log(1/\epsilon))$ complexity.

To evaluate  our framework's practical performance, we tested it on a constrained machine learning problem and in engineering applications. Here, we observed that our method minimizes the objective function more efficiently and reaches a feasible solution in a more stable manner than benchmark stochastic approximation algorithms.

Although our analysis holds for any penalty parameter $\alpha>0$, this parameter should be tuned for optimal performance in practice. The other main hyperparameters are the step size $\eta > 0$ for the inner problems, and the subproblem tolerance values $\tau_k>0$. Since tuning is computationally expensive, it would be helpful to develop methods that adaptively select these hyper-parameters in order to further improve the practical efficacy of our adaptive sampling framework.
Two other natural extensions would be generalizing our methods to include nonlinear constraints and chance constraints.
\appendix
\section{Proof of \protect\Cref{prop:strnconv}}\label{sec:appendix_proof}

\begin{proof}
Due to the strong convexity of $f$, \cref{eq:dualFunction} has a unique optimal solution, denoted as $x(\lambda)$. Using \cite[Corollary 4.5.3]{hiriart2013convex}, we can show that $q(\lambda)$ is differentiable and
\begin{equation}\label{eq:graddual}
\nabla q (\lambda) = A x(\lambda) - b.
\end{equation} Moreover, from the optimality conditions of \cref{eq:dualFunction}, we have 
\begin{equation}\label{eq:optconddualfunc}
\nabla f(x(\lambda)) - A^T \lambda  = 0.
\end{equation}
Let $\lambda_1,\lambda_2 \in \R^m$. Consider, 
\begin{align*}
    \langle \nabla q(\lambda_2) - \nabla q(\lambda_1), \lambda_2 - \lambda_1\rangle &= \langle A(x(\lambda_2) - x(\lambda_1)), \lambda_2 - \lambda_1\rangle  \\
    &= \langle x(\lambda_2) - x(\lambda_1), A^T(\lambda_2 - \lambda_1) \rangle \\
        &=\langle x(\lambda_2) - x(\lambda_1), \nabla f(x(\lambda_2)) - \nabla f(x(\lambda_1)\rangle \\
    &\geq \frac{1}{\mu + L}\|\nabla f(x(\lambda_2)) - \nabla f(x(\lambda_1)\|^2 \\
    &= \frac{1}{\mu + L}\|A^T(\lambda_2 - \lambda_1)\|^2\\
        &\geq \frac{\lambda_{\min}(AA^T)}{\mu + L}\|\lambda_2 - \lambda_1\|^2
    \,,
\end{align*}
where the first  equality is due to \cref{eq:graddual}, the second and the third equalities are due to \cref{eq:optconddualfunc}, and the first inequality is due to  \cite[Theorem 2.1.11]{nesterov2003introductory}. 
Therefore, using \cite[Theorem 2.1.9]{nesterov2003introductory}, we can claim that $q(\lambda)$ is strongly convex with parameter $\frac{\sigma}{L + \mu}$. 
\end{proof}

\section{Logistic regression with multiple disparate impact constraints, \texttt{australian} dataset}\label{sec:appendix}

\begin{figure}
\centering
\includegraphics[height=3.05cm]{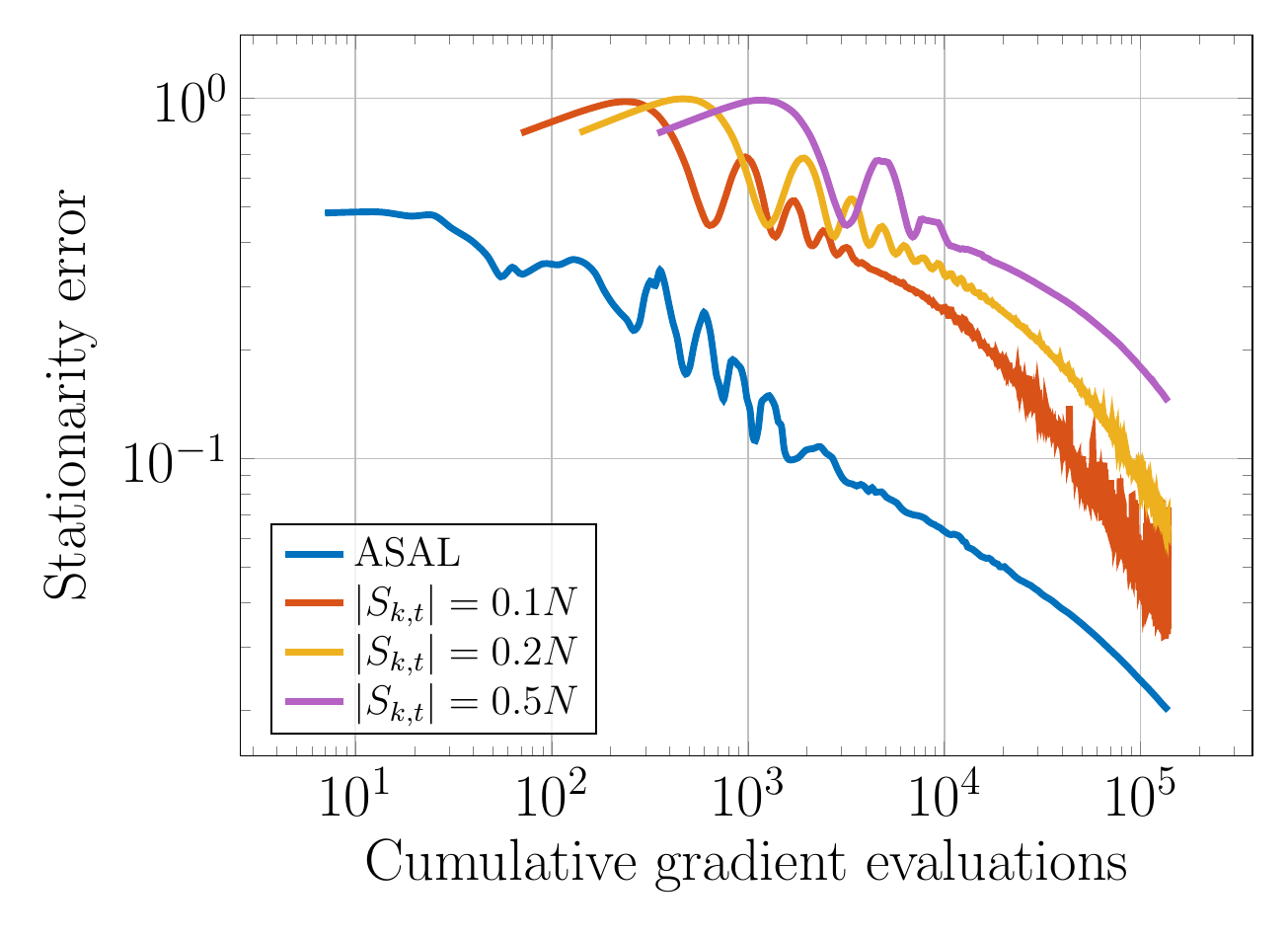}
\includegraphics[height=3.05cm]{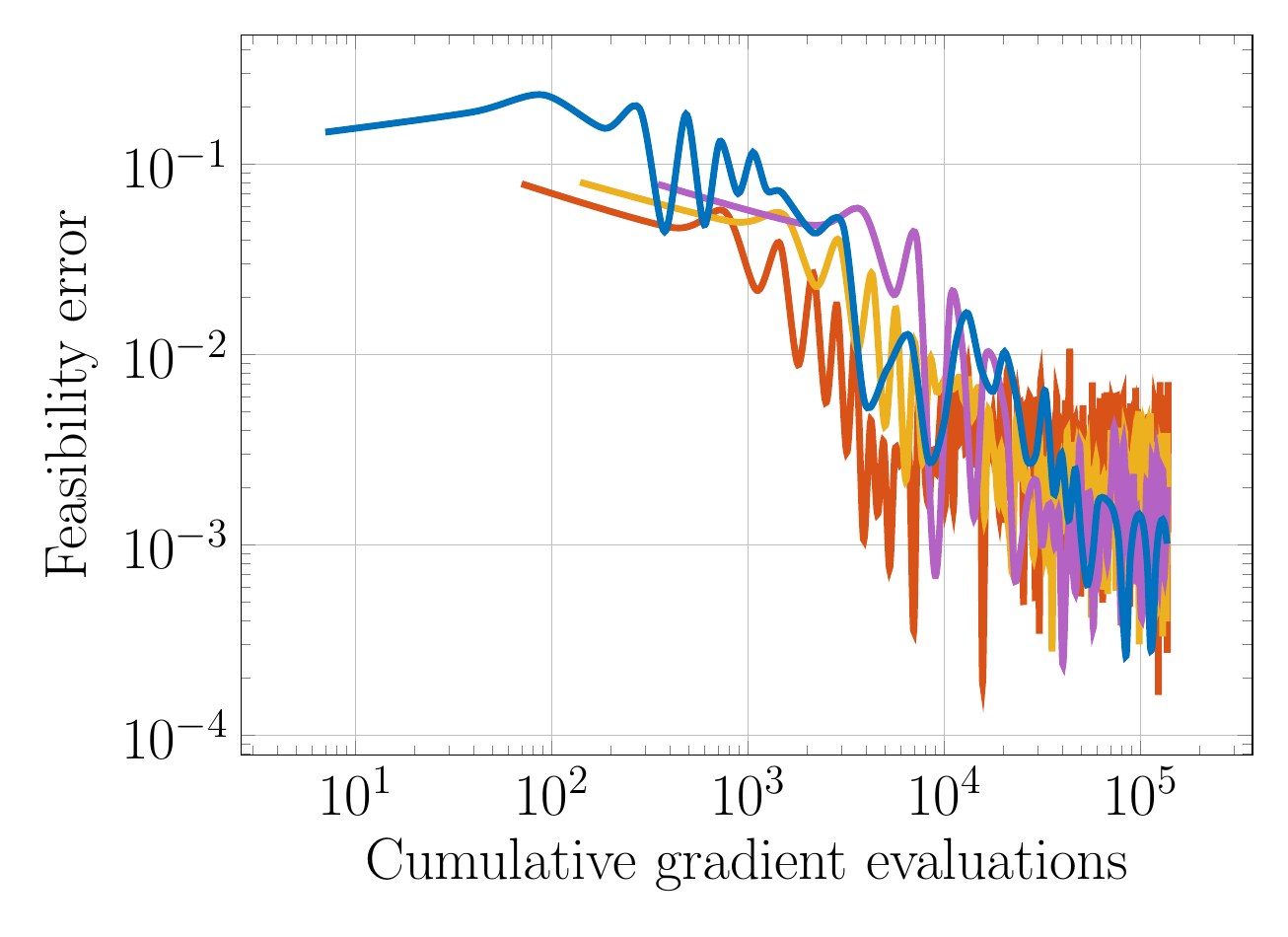}
\includegraphics[height=3.05cm]{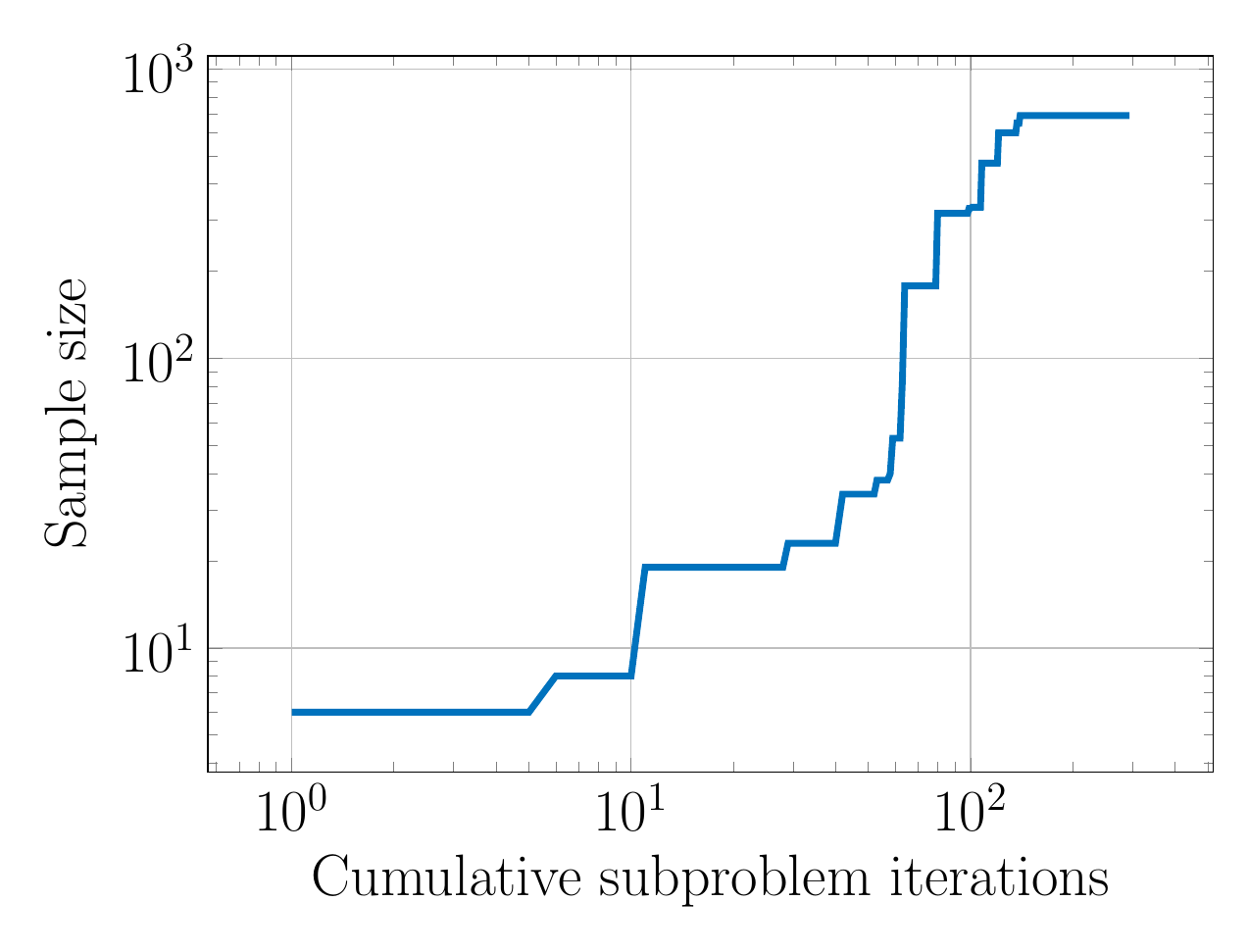}
\caption{
    Results of running \Cref{alg:ASAL} on the constrained logistic regression problem~\cref{eq:log_reg_prob} with the \texttt{australian} classification data set.
    Notice that ASAL achieves the lowest average stationarity errors while matching the minimal feasibility error of the three baseline algorithms.
    To generate these results, we used the algorithm parameters  $\theta_g= 0.99$ and individually tuned $\alpha$, $\eta$, and $\tau_0$.
\label{fig:australian_plots}}
\end{figure}

We consider problem \cref{eq:log_reg_prob} with \texttt{australian} classification data set from the LIBSVM collection \cite{10.1145/1961189.1961199}.
The data set has $N = 690$ rows, and the dimension of the problem is $n=14$.
Considering the budget of cumulative gradient evaluations as 200$N$, and the fixed hyperparameters as $\theta_g = 0.99$, $\nu_{\mathrm{l}} = 0.5$, $s_{\mathrm{l}} = 0.1N, s_{\min} = 0.1N$, we compare three separately-tuned fixed-batch-size implementations of ASAL using $10\%,~20\%,$ and $50\%$ of the data set size. We tune $\tau_0$, $\alpha$ and the step size $\eta$ using the same procedure described in \Cref{sub:logistic_regression_with_disparate_impact_constraints}
with the sets of $\tau_0 = 10^{i-1}$, $\eta = 10^{j-5}$, and $\alpha = 10^{j-4}$, where $i=0,1,2,3,4,5$ and $j=0,1,2,3,4,5,6$.

For each algorithm, we select the run with the smallest average objective function value in the final 10 inner iterations among all runs whose minimum feasibility error in the final 50 inner iterations is less than the feasibility tolerance $10^{-3}$. These values (i.e., $10,\,50$, and $10^{-3}$, respectively) are slightly different than the values given in \Cref{sub:logistic_regression_with_disparate_impact_constraints} to ensure that the best combinations of hyperparameter values correspond to a more stable set of runs. Because of the same reason, we restrict $\alpha = 10^{-1}$ for the ASAL algorithm while tuning, as we observe this value results in choosing runs that show a good balance between stationarity and feasibility errors. The comparison of the algorithms is given in \Cref{fig:australian_plots}. Similar to \Cref{sub:logistic_regression_with_disparate_impact_constraints},
we observe that ASAL and each of the three baseline algorithms achieve a similar \emph{minimal} feasibility error (around feasibility tolerance $10^{-3}$) and that ASAL performs better than the three baseline algorithms when it comes to stationarity error.

\phantomsection\bibliographystyle{siamplain}
\bibliography{main.bib}

\end{document}